\newcommand{\pr}[1]{\mathbb{P}\!\left(#1\right)}
\newcommand{\E}[1]{\mathbb{E}\!\left[#1\right]}
\newcommand{\estart}[2]{\mathbb{E}_{#2}\!\left[#1\right]}
\newcommand{\prstart}[2]{\mathbb{P}_{#2}\!\left(#1\right)}
\newcommand{\prcond}[3]{\mathbb{P}_{#3}\!\left(#1\;\middle\vert\;#2\right)}
\newcommand{\econd}[2]{\mathbb{E}\!\left[#1\;\middle\vert\;#2\right]}
\newcommand{\Pb}{\mathbb{P}}
\newcommand{\Var}[1]{\mathrm{Var}\!\left(#1\right)}
\newcommand{\dGH}[1]{d_{GH}\!\left(#1\right)}
\newcommand{\dGHP}[1]{d_{GHP}\!\left(#1\right)}
\newtheorem{theorem}{Theorem}[section]
\newtheorem{lem}[theorem]{Lemma}
\newtheorem{prop}[theorem]{Proposition}
\newtheorem{cor}[theorem]{Corollary}
\newtheorem{defn}[theorem]{Definition}
\newtheorem{assn}[theorem]{Assumption}
\newtheorem{rmk}[theorem]{Remark}
\def\N{\mathbb{N}}
\def\Z{\mathbb{Z}}
\def\R{\mathbb{R}}
\def\F{\mathcal{F}}
\def\CC{\mathfrak{C}}
\def\B{B^{\text{exc}}}
\def\X{X^{\text{exc}}}
\def\Bb{B^{\text{br}}}
\renewcommand{\phi}{\varphi}
\renewcommand{\epsilon}{\varepsilon}
\def\T{\mathcal{T}}
\def\cadlag{c\`{a}dl\`{a}g }
\def\Loop'{\textsf{Loop'}}
\def\Loop{\textsf{Loop}}
\def\Dec{\textsf{Dec}}
\def\C{C}
\def\diam{\textsf{Diam}}
\def\Ha{\mathbb{H}_{\alpha}}
\def\tT{\tilde{T}}
\def\tnu{\tilde{m}}
\def\td{\tilde{d}}
\def\trho{\tilde{\rho}}
\def\nut{{\nu}_{\T}}
\def\Zmq{Z_{m,q}}
\newcommand{\Zr}{\overset{\leftarrow}{Z}}
\newcommand{\mur}{\overset{\leftarrow}{\mu}}
\begin{document}

\title{Scaling limit of critical percolation clusters\\on hyperbolic random half-planar triangulations\\and the associated random walks}
\author{Eleanor Archer\thanks{Modal’X, UMR CNRS 9023, UPL, Universit\'{e} Paris Nanterre, 200 avenue de la R\'{e}publique, 92000 Nanterre, France, eleanor.archer@parisnanterre.fr}
and David A.\ Croydon\thanks{Research Institute for Mathematical Sciences, Kyoto University, Japan, croydon@kurims.kyoto-u.ac.jp}}
\maketitle

\begin{abstract}
We show that the Gromov-Hausdorff-Prohorov scaling limit of a critical percolation cluster on a random hyperbolic triangulation of the half-plane is the Brownian continuum random tree. As a corollary, we obtain that a simple random walk on the critical cluster rescales to Brownian motion on the continuum random tree.\\
\textbf{AMS 2020 MSC:} 60K37 (primary), 05C10, 05C81, 60K35, 82B41, 82B43.\\
\textbf{Keywords and phrases:} random planar map, critical percolation cluster, continuum random tree, random walk in random environment.
\end{abstract}

\section{Introduction}

In this paper, we study the scaling limits of critical Bernoulli percolation clusters on hyperbolic random planar maps. Specifically, we work on the half-planar triangulations of \autocite{angelray2015}, which are loopless triangulations of the upper half-plane satisfying translation invariance and a domain Markov property. These triangulations have a boundary consisting of a simple doubly-infinite path, and can be embedded into the upper half-plane so that the boundary aligns with $\R \times \{0\}$. In \autocite{angelray2015}, the authors characterised the laws of all such triangulations and showed that they can be indexed by a parameter $\alpha \in [0,1)$. In the regime $\alpha \in (2/3, 1)$, these maps are of hyperbolic flavour; several typical properties (exponential volume growth, anchored expansion and transience of the associated random walk) were established by Ray in \autocite{ray2014geometry}. We will denote the set of the laws of these maps by $(\Ha)_{\alpha \in (\frac{2}{3},1)}$.\label{fa Ha}

By convention, the maps generated according to $(\Ha)_{\alpha \in (\frac{2}{3},1)}$ are rooted, and in particular contain a root edge and an adjacent root vertex, both of which lie on the boundary. We denote the root vertex by $\rho$. We will consider critical site percolation for maps with law $\Ha$, in which vertices are coloured black with probability $p$, and coloured white with probability $1-p$, independently of the state of other vertices. Moreover, we suppose a white--black--white boundary condition, meaning that the root vertex is coloured black and all other boundary vertices are coloured white. This is for technical convenience and we believe that the results of this paper should also hold under other appropriate boundary conditions. Ray showed that, $\Ha$-a.s., the critical percolation probability of the generated map, that is, the infimum over $p$ such that there exists an infinite black cluster of the percolation cluster (almost-surely for the relevant percolation measure), is given by
\begin{equation}\label{pcdef}
p_c = \frac{1}{2}\left(1 - \sqrt{3-\frac{2}{\alpha}} \right),
\end{equation}
see \autocite[Theorem 2.7]{ray2014geometry}. Under the white--black--white boundary condition, we will consider the law of the root cluster conditioned to be large, viewed as a metric-measure space, with our first main result being as follows. Precise definitions of the various terms that appear in the statement will be given below.

\begin{theorem}\label{thm:main scaling lim}
Fix $\alpha \in (\frac{2}{3}, 1)$. Given $n \geq 1$, let $\C_n$ denote the cluster that contains the root vertex (denoted $\rho$) of critical percolation on a map with law $\Ha$ with white--black--white boundary condition, conditioned to have at least $n$ vertices. Let $d_n$ denote graph distance on $\C_n$, and $\nu_n$ denote counting measure on the vertices of $\C_n$. It is then the case that there exists a constant $\gamma \in (0,\infty)$, depending only on $\alpha$, such that: as $n \to \infty$,
\[\left(\C_n, \frac{ 1}{\gamma\sqrt{n}}d_n, \frac{1}{n}\nu_n, \rho\right) \overset{(d)}{\rightarrow} (\T, d_\T, \nu_\T, \rho_\T)\]
in distribution with respect to the pointed Gromov-Hausdorff-Prohorov topology, where the limiting space $(\T, d_\T, \nu_\T, \rho_\T)$ is the Brownian continuum random tree, conditioned to have mass at least $1$.
\end{theorem}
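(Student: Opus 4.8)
The plan is to realise $\C_n$, up to small corrections, as a conditioned critical Galton--Watson tree, and then to invoke the classical invariance principle sending such trees to the Brownian continuum random tree. The first and most structural step is to peel the percolation interface. Under the white--black--white boundary condition the black cluster $\C$ of $\rho$ is bounded by a simple interface with the surrounding white region, and one may explore it by a peeling process which, thanks to the domain Markov property of $\Ha$ from \autocite{angelray2015}, is Markovian: at each step one reveals a triangle incident to the current interface, and whenever the interface closes off a finite region one records a ``bubble'' (a finite triangulation of a polygon whose boundary lies on $\C$). Keeping track only of how the interface branches and of the bubbles that are swallowed produces a discrete tree $T_n$ --- possibly a finitely multi-type tree, with types for branch blocks and for bubbles --- carrying the bubbles as decorations on its edges, and with the property that graph distance $d_n$ on $\C_n$ differs from graph distance on $T_n$, suitably weighted by the perimeters/diameters of the bubbles, only by controllable errors.

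The second step identifies the law of the (unconditioned) tree $T$ and of its decorations. The boundary-length process of the peeling exploration reduces, by the domain Markov property, to a random walk, and the choice $p = p_c$ with $p_c$ as in \eqref{pcdef} is --- via Ray's computation \autocite[Theorem~2.7]{ray2014geometry} --- exactly the value making the associated branching mechanism critical, so that $T$ is a critical Galton--Watson tree (or finitely multi-type variant). The essential point, and the place where the hyperbolic regime $\alpha \in (2/3,1)$ genuinely enters, is that this critical offspring law has \emph{finite variance} and the bubble volumes have more than two moments (indeed, one expects exponentially light tails, since in the hyperbolic regime free Boltzmann triangulations of polygons are not critical): this is what produces the Brownian CRT, with Gaussian fluctuations and scaling exponent $1/2$, rather than the stable trees and looptrees governing the Euclidean UIPT. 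Granting this, the total-size tail satisfies $\pr{|\C| \geq n} \asymp n^{-1/2}$, so the event $\{|\C_n| \geq n\}$ is of mild polynomial cost, and Aldous' theorem --- in the ``at least $n$ vertices'' form, via the standard spine/contour arguments --- gives, for an explicit $c = c(\alpha)$,
\[
\Bigl(T_n,\ \tfrac{1}{c\sqrt{n}}\,d_{T_n},\ \tfrac1n\,\nu_{T_n},\ \rho\Bigr)\ \overset{(d)}{\longrightarrow}\ (\T, d_\T, \nu_\T, \rho_\T),
\]
the Brownian continuum random tree conditioned to have mass at least $1$, in the pointed Gromov--Hausdorff--Prohorov topology.

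The final step transfers this convergence from $T_n$ to $\C_n$. Assign to each edge of $T_n$ a length equal to (a fixed multiple of) the diameter of the corresponding bubble; since conditionally on the skeleton the bubble data are i.i.d.-like with finite mean and more than two moments, a law of large numbers shows that this randomly weighted tree is, in the pointed GHP sense, within $o(\sqrt n)$ of a \emph{constant} multiple of the combinatorial tree $T_n$, which lets us absorb the bubble contribution into the final constant $\gamma$. Separately, $(\C_n, d_n)$ is within $o(\sqrt n)$ of the weighted tree: collapsing each bubble to its base block defines a correspondence whose distortion is bounded, along any geodesic, by the sum of the diameters of the bubbles it crosses, which is $o(\sqrt n)$ with high probability by the same moment bounds, using also that in a triangulation a region bounded by a cycle of length $\ell$ contains at least $\ell-2$ faces (so long bubble boundaries force large, hence rare, bubble volumes). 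The counting measures are compared in the same way, via a law of large numbers for $|\C_n|$ against the number of blocks of $T_n$, and the root is sent to $\rho_\T$. I expect the main obstacle to be the combination of Step~1 with the estimates of Step~2 --- establishing that the peeling of a percolation cluster on $\Ha$ really does reduce to a \emph{critical, finite-variance} branching structure with sufficiently light-tailed bubbles, and then upgrading the resulting pointwise bubble bounds to uniform control over all $O(n)$ bubbles as the Gromov--Hausdorff--Prohorov comparison demands; the identification of the precise value of $\gamma$ and the remaining measure-theoretic bookkeeping should then be routine.
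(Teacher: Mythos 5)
Your proposal follows essentially the same route as the paper: a peeling exploration of the interface whose boundary-length process is a critical random walk at $p=p_c$, encoding the cluster boundary as a (two-type) critical Galton--Watson tree/looptree decorated by Boltzmann triangulations with light-tailed volumes, an invariance principle for the conditioned decorated tree yielding the CRT with the bubble contribution absorbed into the constant via a law of large numbers along spines, and finally a change of conditioning from the exploration length to the cluster size. The only cosmetic differences are that the paper rescales distances by the mean boundary-to-boundary crossing length of a bubble rather than its diameter and handles the root-loop discrepancy explicitly, neither of which changes the strategy.
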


Note that this is an \textit{annealed} result, with the convergence in distribution holding under the annealed law of the cluster (in other words, averaged over both the law of the map and the percolation configuration). We highlight that conditioning the annealed law as we do affects not only the percolation configuration, but also the structure of the map generated according to $\Ha$. Our techniques do not extend to a quenched result (i.e.\ one that holds with respect to the relevant percolation measure, $\Ha$-a.s.). We further remark that this result is closely related to \autocite[Open Question 12.12]{curien2019peeling}, which asks about the scaling limit of critical clusters on a more general class of hyperbolic random planar maps.

In addition, as our second main result, we establish convergence of the simple random walk on $\C_n$ to Brownian motion on the continuum random tree. The latter object was first defined informally by Aldous \autocite[Section 5.2]{AldousCRTII}, formally constructed by Krebs \autocite{krebs1995brownian} and studied in detail in \autocite{DavidCRT}. Again, we postpone exact definitions of the various terms appearing in the statement until later in the article. Moreover, we highlight that this is also an annealed result (now with three layers of randomness in the discrete case); the laws of the random walk and limiting diffusion are averaged over the environments in which they move. We will restate the result more precisely as Theorem \ref{RWprecise} below. Note that it would be quite straightforward to extend the following theorem to include convergence of the associated mixing times, transition densities and local times; this is addressed in \cref{rmk:transition densities etc}.

\begin{theorem}\label{thm:main RW}
In the setting of \cref{thm:main scaling lim}, let $X^{\C_n}$ denote the discrete-time simple random walk on $\C_n$, started at $\rho$. Moreover, let $(X^\T_t)_{t \geq 0}$ denote Brownian motion on $(\T, d_\T, \nu_\T)$, started at $\rho_\T$. It is then the case that, with respect to the natural extension of the Gromov-Hausdorff-topology incorporating c\`{a}dl\`{a}g paths, one can add to the statement of \cref{thm:main scaling lim} that, as $n\rightarrow\infty$,
\[\left(X^{\C_n}_{\lfloor \theta n^{3/2}t \rfloor}\right)_{t \geq 0} \overset{(d)}{\rightarrow} \left(X^\mathcal{T}_t\right)_{t \geq 0},\]
where $\theta\in(0,\infty)$ is a constant depending only on $\alpha$.
\end{theorem}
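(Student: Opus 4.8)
The plan is to deduce \cref{thm:main RW} from \cref{thm:main scaling lim} by invoking a general principle that Gromov-Hausdorff-Prohorov convergence of graphs (suitably rescaled) with uniformly good volume/resistance bounds implies convergence of the associated random walks. Concretely, I would appeal to the framework of Croydon (and Athreya-L\"ohr-Winter), which says the following: if a sequence of rooted measured metric spaces equipped with resistance metrics converges in the GHP sense to a limiting compact resistance space, then the associated (time-changed, trace) stochastic processes converge in the sense that the quadruples (space, metric, measure, root) together with the laws of the processes converge in an extended topology incorporating \cadlag paths in the Skorohod sense. Since the graph distance on a tree \emph{is} the resistance metric (up to the fact that our discrete object $\C_n$ is a graph, not a tree, but the clusters will be shown in the proof of \cref{thm:main scaling lim} to be close to trees), and the Brownian CRT carries a canonical resistance form whose associated process is Brownian motion on the CRT, the main input is already in hand.

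The key steps, in order, are as follows. First, recall from the proof of \cref{thm:main scaling lim} that $\C_n$ is, with high probability, a graph whose 2-core is small relative to $\sqrt{n}$; more precisely, the decorations/surplus edges contribute a uniformly negligible amount to both the metric and the resistance, so that the resistance metric on $\C_n$ is within $o(\sqrt n)$ of $d_n$ and the space $(\C_n, n^{-1/2}R_n, n^{-1}\nu_n,\rho)$ has the same scaling limit $(\T,d_\T,\nu_\T,\rho_\T)$, where $R_n$ denotes effective resistance with unit edge conductances. Second, identify the natural time scaling: the simple random walk on $\C_n$ run for $\lfloor \theta n^{3/2} t\rfloor$ steps corresponds, via the standard time-change identity for reversible Markov chains, to the resistance-metric process run for time $\asymp (\text{total edge volume})\times(\text{diameter}) \asymp n \cdot \sqrt n = n^{3/2}$, and one reads off $\theta$ from $\gamma$ together with the factor-of-two between vertex-counting measure and edge-counting measure. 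Third, check the hypotheses of the convergence theorem: one needs the limiting space to be a.s. compact (true: the CRT conditioned to have mass at least $1$ is a.s. compact) and the convergence of the quadruples to hold jointly with the embedding into a common metric space (this is exactly what \cref{thm:main scaling lim} provides once we pass to resistance metrics, via Skorohod coupling). Fourth, invoke the general theorem to conclude joint GHP-plus-\cadlag-path convergence of $(\C_n, n^{-1/2}d_n, n^{-1}\nu_n,\rho, X^{\C_n}_{\lfloor\theta n^{3/2}\cdot\rfloor})$ to $(\T,d_\T,\nu_\T,\rho_\T,X^\T)$, which is the assertion of the theorem. Finally, note that the annealed nature of the statement causes no difficulty: all of the above holds conditionally on the environment for a.e. environment in the relevant sense, or more simply one works throughout with the annealed law and the cited convergence theorem applies verbatim.

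The main obstacle I anticipate is \textbf{handling the gap between the graph $\C_n$ and a tree} at the level of the random walk rather than just the metric. For the GHP scaling limit it suffices that the surplus edges are few and short, but for the random walk one must also rule out that these $O(1)$-many extra edges trap the walk or otherwise distort its time-changed trajectory on the $n^{3/2}$ timescale; this requires showing that the effective resistances across and volumes near the 2-core are uniformly controlled (a ``uniform resistance/volume bound'' in the sense needed for tightness of the processes), and that the pointed GHP convergence can be upgraded to convergence of the resistance forms in the Mosco/$\Gamma$-convergence sense that the general theorem demands. Concretely this means establishing: (i) a uniform lower bound on the volume of balls of a given radius (so the walk does not get stuck), and (ii) an equicontinuity estimate on the resistance metric so that no ``pinching'' occurs in the limit. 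Both should follow from the structural description of $\C_n$ developed for the proof of \cref{thm:main scaling lim} (in particular the fact that $\C_n$ looks like a size-conditioned Bienaym\'e-Galton-Watson tree with finitely many extra edges and critical offspring tails in the domain of attraction of a Gaussian), combined with standard volume estimates for such trees, but verifying them carefully is where the real work lies.
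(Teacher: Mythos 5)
Your overall route --- pass to the effective resistance metric and the walk's invariant measure, show that these rescale to the same CRT limit, and then invoke the resistance-form convergence theorem of \autocite{DavidResForms} --- is the same as the paper's (cf.\ \cref{resconv}, \cref{RWcor} and \cref{RWprecise}). However, your execution rests on an incorrect structural picture of $\C_n$, and this creates a genuine gap. The cluster is not a tree with $O(1)$ surplus edges: it is a looptree-like gluing of order-$n$ small blocks, each a critically percolated Boltzmann triangulation of a polygon. Consequently your claim that the resistance metric is within $o(\sqrt{n})$ of the graph metric is false; what is true (and what \cref{resconv} establishes) is that $R_n\approx \delta\, d_n$ for a model-dependent constant $\delta$, obtained by running the decorated-tree argument of \cref{sctn:dec tree limits} a second time with the resistance metric on the inserted blocks (which satisfies \cref{assn:inserted graph general assumption} since resistance is dominated by graph distance, giving a resistance version $\chi_R$ of the constant in \cref{def:scaling consts}(b)) and with the degree measure (which satisfies it because, by Euler's formula, the total degree of a triangulation is at most six times its number of vertices). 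Similarly, the walk's invariant measure is the degree measure, which is not asymptotically twice the counting measure as it would be on a tree; it is $\kappa$ times it for another nontrivial constant determined by the mean total degree of the inserted blocks. Hence $\theta$ cannot be ``read off from $\gamma$ and a factor of two''; it is $\theta=\delta\gamma\kappa$, and identifying $\delta$ and $\kappa$ requires exactly the second pass through the decorated-tree machinery that your sketch omits.

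The obstacles you flag at the end are also not the relevant ones. In the compact resistance-form setting of \autocite{DavidResForms}, GHP convergence of the spaces equipped with resistance metrics and invariant measures already yields convergence of the associated processes; no separate Mosco/$\Gamma$-convergence verification or uniform lower volume bounds are needed (the latter only enter for local time convergence, cf.\ \cref{rmk:transition densities etc}). Nor is there any issue of the walk being ``trapped'' by a sparse 2-core: the blocks occur densely along the cluster, and their aggregate effect is precisely the constants $\delta$ and $\kappa$ above. Two further points should be addressed. First, the general theorem delivers convergence of the continuous-time chain with unit-mean exponential holding times associated with the degree measure, so a final (easy) step is needed to pass to the discrete-time walk $X^{\C_n}_{\lfloor\theta n^{3/2}t\rfloor}$, as in \cref{RWprecise}. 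Second, since \cref{thm:main scaling lim} is only an annealed statement, one should use Skorohod representation to couple the environments so that the GHP convergence of the resistance metric measure spaces holds almost surely, apply the deterministic-environment theorem realization-wise, and then integrate over the environment; this is how \cref{RWcor} is organised, and simply saying the cited theorem ``applies verbatim'' to the annealed law glosses over this step.
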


The strategy for proving \cref{thm:main scaling lim} relies on a peeling exploration of the percolation interface that gives a decomposition of $\C_n$ into a series of small blocks glued along a tree structure. In fact the tree is a two-type critical Galton-Watson tree, which is well-known to rescale to the continuum random tree by a result of Miermont \autocite{miermont2008invariance}. Moreover, it will essentially follow from results of Angel and Ray \autocite{angelray2015, ray2014geometry} that the blocks are sufficiently small that they all vanish in the scaling limit, and it therefore follows from the results of Archer \autocite{archer2020random} that all the critical exponents agree with those of critical Galton-Watson trees. (The latter results are written for one-type trees, but the same principles apply in the two-type case.) By mimicking a proof of Stufler (who showed a similar result for one-type trees \autocite[Theorem 6.60]{StuflerTreeSurvey}), we can use these facts to show that the scaling limit of $\C_n$ is a constant multiple of the scaling limit of the underlying two-type tree. We note that similar results regarding scaling limits of decorated one-type trees (also treating the stable case) were obtained in \autocite{fleurat2023phase}.

The approach used to prove \cref{thm:main scaling lim} actually applies more generally, and can be used to show that $\C_n$ equipped with an effective resistance metric and the degree measure on its vertices also rescales to the continuum random tree (see \cref{sctn:RW} below). \cref{thm:main RW} then follows as a direct consequence of a general result of \autocite{DavidResForms}, which expresses convergence of stochastic processes in terms of convergence of these quantities. Since we follow this route to proving \cref{thm:main RW}, it should also be possible to apply related results in the literature to deduce convergence of the random walk transition densities, as in \autocite{CHllt}, and mixing times, as in \autocite{CHKmix}. We have not checked, but it is also reasonable to believe that the local time convergence criteria of \autocite{Noda} also holds in this setting.

\begin{rmk}
Percolation configurations on the maps $\Ha$ in the hyperbolic regime were also considered by Ray in the paper \autocite{ray2014geometry} using the same peeling exploration process, but there he investigated questions of a different nature. In particular, he obtained the value of $p_u$, the critical probability for uniqueness of the infinite cluster (as well as that of $p_c$ mentioned above), results on the spread of clusters along the boundary and properties of the ends (rays to infinity) of the clusters. Some interesting questions for supercritical percolation in this regime are left open, such as anchored expansion and positive random walk speed (see \autocite[Open Question 6.2]{ray2014geometry} and the discussion around it). As noted above, these properties have been established for the unpercolated maps \autocite{ray2014geometry, angel2016random}.
\end{rmk}

The results of \cref{thm:main scaling lim} and \cref{thm:main RW} contrast strongly with the behaviour of critical percolation clusters on uniform random planar maps. On the uniform infinite planar triangulation (UIPT), Curien and Kortchemski \autocite{LooptreePerc} showed using different techniques that the boundary of a critical cluster has the form of a stable looptree, meaning that the cluster once again has an underlying tree structure, but that the individual blocks do not vanish in the scaling limit. In this case, the cluster is conjectured to rescale to the $\frac{7}{6}$-stable map (see \autocite[Section 5.4]{BerCurMierPerconTriang}), which is very different to the continuum random tree. Using similar peeling techniques to those of this paper, Richier \autocite{RichierIICUIHPT} also showed similar results (stable looptree type results) for the boundary of the incipient infinite cluster on the uniform infinite half-planar triangulation. This corresponds to $\Ha$ when $\alpha=\frac{2}{3}$. In the case when $\alpha < \frac{2}{3}$, the maps themselves are similar in structure to finite variance critical Galton-Watson trees (see \autocite{ray2014geometry}) and so there is not expected to be a non-trivial percolation phase transition. (Cf. the recent result concerning Fortuin-Kasteleyn decorated planar maps with parameter $q>4$ of \autocite{feng2023triviality}.)

Note further that the appearance of the continuum random tree as a critical scaling limit on hyperbolic random planar maps should not come as a surprise; similar results have been obtained for oriented percolation on supercritical causal maps \autocite[Theorem 6]{marchand2023percolation}. The continuum random tree also arises as the scaling limit of Brownian bridges in hyperbolic spaces \autocite{chen2017long}; this latter result is somewhat similar in spirit to the convergence of the peeling exploration process. Moreover the continuum random tree is expected to appear more generally as a scaling limit for a much wider class of hyperbolic random planar maps (see \autocite[Open Question 12.12]{curien2019peeling} and \autocite[Section 5]{marchand2023percolation}), and as a scaling limit for large critical percolation clusters on high-dimensional Euclidean lattices (see \autocite{HS1,HS2} for rigorous results in this direction). In particular we anticipate that similar results should hold for critical percolation on the planar stochastic hyperbolic infinite planar triangulations of \cite{curien2016planar}, which are the full-plane analogues of the half-planar maps considered in the present paper.

Motivated by Kesten's pioneering work concerning the subdiffusivity of simple random walk on critical percolation clusters in two dimensions \autocite{KestenIICtree}, a great deal of effort has been devoted to understanding the scaling limit of such processes. In the high-dimensional case, work such as \autocite{BCF2, BCF1, Hey, KN} has brought closer the prospect of showing that the limiting process is Brownian motion on the continuum random tree, as appears in the statement of \cref{thm:main RW}. Moreover, general work built on the connections between random walks and electrical networks give techniques that enable a wider class of critical percolation models to be handled, see \autocite{CrHaKu,DavidResForms}. The conclusion of \cref{thm:main RW} not only showcases a new application of the methodology of \autocite{DavidResForms}, adding to models such as critical Galton-Watson trees and the critical Erd\"os-R\'enyi graph, but also gives further evidence in support of the universality of Brownian motion on the continuum random tree as the scaling limit of random walks on critical percolation clusters in (suitably-interpreted) high-dimensional spaces.

We remark that the use of the underlying tree structure in the proof of \cref{thm:main RW} is crucial. By contrast, most random planar map models are non-tree like, and the associated random walks are notoriously difficult to study. As is the case in \cref{thm:main RW} (and maybe more pertinently in \autocite{KestenIICtree}), these random walks are generally believed to be subdiffusive due to the fractal properties of random planar maps. This has been established quantitatively in \autocite{benjamini2013simple} for the uniform infinite planar quadrangulation, and in \autocite{curien2021infinite} for a much wider class of infinite discrete stable maps in the form of an upper bound of $\frac{1}{3}$ on the displacement exponent (see also \autocite[Fig.\ 1]{curien2021infinite} for similar exponents for other related random map models). In the special case of the UIPT, it was shown in \autocite{gwynne2021random} and \autocite{gwynne2020anomalous} that the true exponent is $\frac{1}{4}$. Their result also holds for some related models in the Brownian universality class.

The remainder of the paper is organised as follows. In \cref{sctn:background}, we give some preliminaries on Galton-Watson trees, looptrees, random walks and the Gromov-Hausdorff-Prohorov topology. In \cref{sctn:the model}, we formally introduce percolation on a map with law $\Ha$ and explain how the boundary of the root cluster can be encoded by a peeling exploration. In particular, we use this to establish that (up to a small error concerning the structure close to the root) the law of the cluster boundary is encoded by a two-type Galton-Watson tree, and explain how the cluster can be constructed from blocks glued along this tree structure. In \cref{sctn:dec tree limits}, we consider a general model of decorated two-type Galton-Watson trees and establish that, under certain conditions on the decorations, their scaling limit is the continuum random tree. We then tie everything together in \cref{sctn:main result proof} by introducing a coupling between the root cluster and a decorated tree model. We verify that the decorated tree model indeed satisfies the assumptions required to apply the results of \cref{sctn:dec tree limits}, and that, under the coupling, the difference between the two structures is negligible in the scaling limit, thus proving \cref{thm:main scaling lim}. Finally, in \cref{sctn:RW}, we explain how the techniques of the previous sections can be applied to study random walks on $\C_n$, and in particular prove \cref{thm:main RW}. To facilitate reading, in \cref{sec:notation}, we present a table that summarises the most important notation of the article.
\bigskip

\noindent
\textbf{Acknowledgements.} EA would like to thank Asaf Nachmias for helpful conversations and for bringing the paper \autocite{angelray2015} to her attention. Both authors would also like to thank Ben Hambly for his hospitality during visits they made to Oxford in 2022, during which time this work was initiated. The research of EA was supported by the ANR grant ProGraM (ANR-19-CE40-0025). DC was supported by JSPS Grant-in-Aid for Scientific Research (C) 19K03540 and the Research Institute for Mathematical Sciences, an International Joint Usage/Research Center located in Kyoto University.

\section{Background}\label{sctn:background}

In this section, we introduce a variety of preliminaries, including background on random trees and looptrees, random walks and excursions, Gromov-Hausdorff-type topologies and conditioning probability measures.

\subsection{Galton-Watson trees}\label{sctn:GW background}

Here, we briefly introduce Galton-Watson trees, starting with the Ulam-Harris labelling convention for discrete trees, following the formalism of \autocite{Neveu}. Define the set
\[\mathcal{U}=\cup_{n=0}^{\infty} {\N}^n.\]
By convention, ${\N}^0=\{ \emptyset \}$. If $u=(u_1, \ldots, u_n)$ and $v=(v_1, \ldots, v_m) \in \mathcal{U}$, we let $uv= (u_1, \ldots, u_n, v_1, \ldots, v_m)$ denote the concatenation of $u$ and $v$.

\begin{defn} A plane tree $T$ is a finite subset of $\mathcal{U}$ such that:
\begin{enumerate}[(i)]
\item $\emptyset \in T$;
\item if $v \in T$ and $v=uj$ for some $j \in \N$, then $u \in T$;
\item for every $u \in T$, there exists a number $k_u(T) \geq 0$ such that $uj \in T$ if and only if $1 \leq j \leq k_u(T)$.\label{fa ku}
\end{enumerate}
\end{defn}

The classical model of a Galton-Watson tree $T$ is built from a collection $(k_u)_{u \in \mathcal{U}}$ of independent and identically-distributed (i.i.d.)\ random variables taking values in $\{0,1,\dots\}$. In particular, $T$ is the unique plane tree such that $k_u(T)=k_u$ for all $u \in T$. (We assume that the reader is familiar with Galton-Watson trees, but if this is not the case, see \autocite[Section 1.2]{LeG2005randomtreesandapplications} for some initial background.) Here, we will mainly work with a generalisation of this model, namely a \textit{two-type} Galton-Watson tree, meaning that we have one offspring distribution $\mu_{\bullet}$ at even generations, and another distribution $\mu_{\circ}$ at odd generations. In other words, the root, which we will denote by $\rho$, reproduces with offspring distribution $\mu_{\bullet}$, its children reproduce with offspring distribution $\mu_{\circ}$, and the distributions continue to alternate at odd and even generations. Specifically, for any finite tree $t$,
\begin{equation*}\label{fa mu t}
\pr{T=t} = \prod_{u \in t_{\bullet}} \mu_{\bullet}(k_u) \prod_{u \in t_{\circ}} \mu_{\circ}(k_u),
\end{equation*}
where $t_{\bullet}$ denotes the set of vertices at even generations in $t$, and $t_{\circ}$ denotes the set of vertices at odd generations in $t$. We will sometimes refer to the vertices in $t_{\bullet}$ as ``black vertices'', and the vertices in $t_{\circ}$ as ``white vertices''. Moreover, we will henceforth assume that $\mu_{\circ}$ is supported on the positive integers, or in other words that $\mu_{\circ}(0)=0$.

\subsection{Kesten's tree}\label{kessec}

In this paper we will be only interested in two-type Galton--Watson trees that are critical, meaning that $\E{\mu_{\circ}}\E{\mu_{\bullet}}=1$. In this case we can also define a version that is conditioned to survive using a construction of Kesten as follows. Let $\hat{\mu}_{\circ}$ and $\hat{\mu}_{\bullet}$ denote size-biased versions of $\mu_{\circ}$ and $\mu_{\bullet}$ respectively (i.e.\ $\hat{\mu}_{\circ}(i)\propto i\mu_\circ(i)$ and  $\hat{\mu}_{\bullet}(i)\propto i\mu_\bullet(i)$, respectively). We construct an infinite tree recursively as follows.

\begin{defn}\label{def:Kesten tree}\autocite{KestenIICtree}.
The \textbf{Kesten tree} $T_{\infty}$ associated with the critical pair $(\mu_{\bullet}, \mu_{\circ})$ is a two-type Galton--Watson tree satisfying the following.
\begin{itemize}
\item Individuals are either normal or special.
\item The root of $T_{\infty}$ is special.
\item A normal individual produces only normal individuals according to $\mu_{\bullet}$ if it is at an even generation, and according to $\mu_{\circ}$ if it is at an odd generation.
\item A special individual produces individuals according to $\hat{\mu}_{\bullet}$ if it is at an even generation, and according to $\hat{\mu}_{\circ}$ if it is at an odd generation. Of these, one of them is chosen uniformly at random to be special, and the rest are normal.
\end{itemize}
\end{defn}
\noindent
Note that the special vertices form an infinite one-ended spine. For each $h \geq 0$, we let $v_h$ denote the special vertex at distance $h$ from the root.

Suppose that $\E{\mu_{\circ}}\E{\mu_{\bullet}}=1$ and let $T$ be an unconditioned two-type Galton--Watson tree with these offspring laws. We will use the following absolute continuity relation between $T$ and $T_{\infty}$, which is a special case of  \autocite[Lemma 1]{miermont2008invariance}. Let $F_T$ be a function $T \to \R$ such that for each $v \in T$, the value of $F_T(v)$ does not depend on any of the descendants of $v$. Then, for any $h \geq 0$,
\begin{equation}\label{eqn:abs cont Kesten}
\E{\sum_{\substack{v \in T: \\ d(\rho, v)=h}}F_T(v)} = \estart{F_T(v_h)}{\infty},
\end{equation}
where we denote by $d$ the graph distance on $T$, so that $d(\rho,v)=h$ means that $v$ is in generation $h$ of $T$.

\subsection{Looptrees}\label{sctn:looptrees}

In addition to trees, we also need the notion of looptrees. Given a two-type plane tree $t$ with no white leaves, $\Loop(t)$ has vertex set $t_{\bullet}$ and is obtained by drawing a loop around each white vertex through its children and parent according to the natural planar embedding. (Note that, although the notion of an infinite looptree does appear in the literature, in this work all the graphs we consider will be finite.) The procedure for constructing $\Loop(t)$ is described more formally as follows. Take a white vertex $u$; its children are $(ui)_{i=1}^{k_u}$. Additionally let its parent be $u0$. Then draw an edge from $ui$ to $u(i+1)$ for each $0 \leq i < k_u$, and additionally from $uk_u$ to $u_0$. Repeat this procedure for every $u \in t_{\circ}$. Then $\Loop(t)$ is the collection of vertices in $t_{\bullet}$, joined by the newly constructed edges, and rooted at the root of $t$. See \cref{fig:contour coding looptree} for an example (the root is the vertex labelled $0$).

\subsection{The continuum random tree}\label{sctn:CRT def}

We now formally define the (Brownian) continuum random tree (CRT) appearing in \cref{thm:main scaling lim}. The CRT is a canonical example of a random real tree and was first introduced by Aldous in \autocite{AldousCRTI}. We will be interested in the CRT conditioned to have mass at least $1$, which is a slight variation of Aldous' original definition. For background, we note that the CRT possesses many fractal and self-similarity properties, but we do not go into detail here. An extensive account was already given in the three papers \autocite{AldousCRTI}, \autocite{AldousCRTII} and \autocite{AldousCRTIII} by Aldous, including several possible constructions. Since then, a vast literature has developed on the topic, which an interested reader can readily explore.\label{fa exc defs}

Perhaps the quickest way to define the object of interest here is to appeal to its coding by an associated contour function. In the case of the continuum random tree, this role is played by a Brownian excursion. Specifically, write $N$ for the It\^{o} excursion measure (see \autocite[Section 2]{LeGIto} for a definition) and let $\B$ be the Brownian excursion conditioned to have lifetime (duration) $\zeta$ at least 1, i.e.\ with law given by $N(\cdot\:\vline\:\zeta\geq 1)$. The continuum random tree is then constructed as follows. Conditionally on the lifetime of $\B$ being equal to $\zeta$ for some $\zeta\geq 1$, we first define a pseudo-distance on $[0,\zeta]$ by setting
\[d_{\B}(s,t)\equiv d_{\B}(t,s) = \B(s) + \B(t) - 2 \inf_{s \leq r \leq t} \B(r)\]
whenever $s \leq t$. We then define an equivalence relation on $[0,\zeta]$ by saying $s \sim t$ if and only if $d_{\B}(s,t) = 0$, and set
\[(\T,d_\T) := ([0,\zeta]/ \sim, d_{\B}).\]\label{eqn:CRT def}
A simulation is shown in \cref{fig:CRT}. We define $\pi$ to be the canonical projection from $[0,\zeta]$ to $\T$, and suppose the root $\rho_\T$ of $\T$ is given by $\pi(0)$. Moreover, we define a Borel measure $\nu_\T$ on $\T$ as the push-forward of Lebesgue measure on $[0,\zeta]$ by $\pi$. In this way, we have defined the random compact metric-measure space $(\T,d_\T,\nu_\T,\rho_\T)$ appearing in the statement of \cref{thm:main scaling lim}.

\begin{figure}[t]
\begin{subfigure}{0.47\textwidth}
\begin{center}
\includegraphics[width=0.9\textwidth]{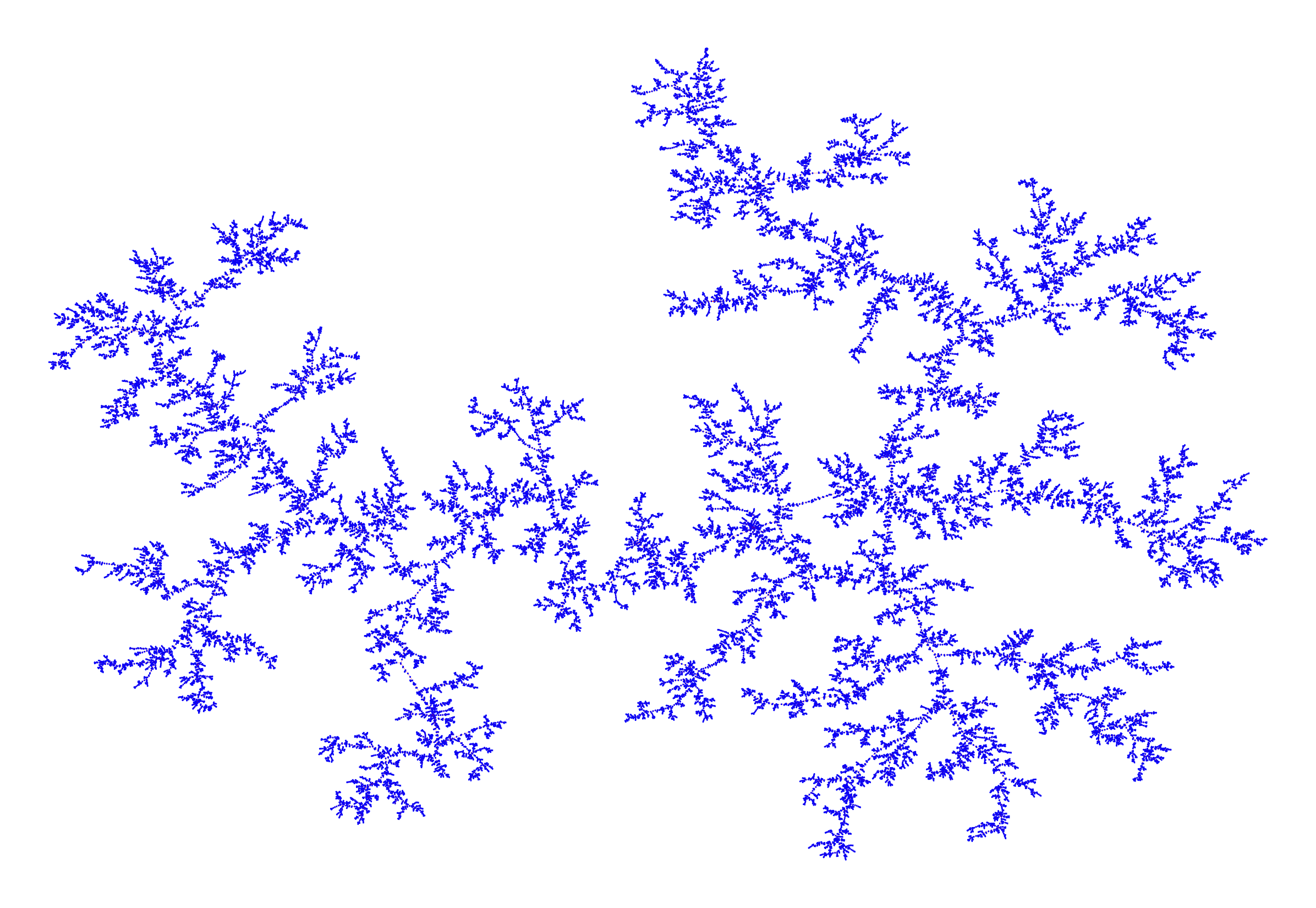}
\end{center}
\end{subfigure}
\begin{subfigure}{0.47\textwidth}
\begin{center}
\includegraphics[width=0.9\textwidth]{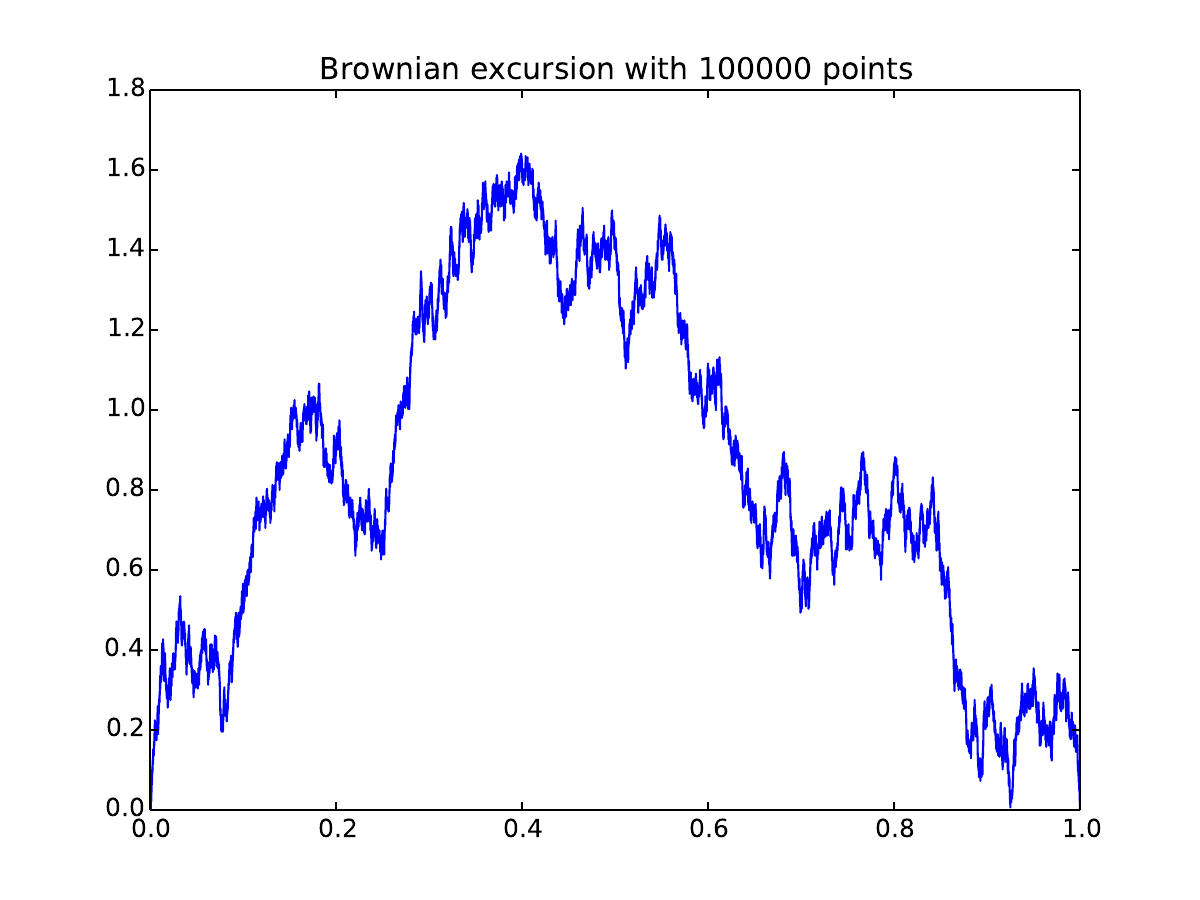}
\end{center}
\end{subfigure}
\caption{A simulation of the CRT and the associated excursion, both by Laurent M\'enard.}\label{fig:CRT}
\end{figure}

Although we will not need the exact statement in what follows, to provide further context for \cref{thm:main scaling lim}, we remark that $(\T,d_\T,\nu_\T,\rho_\T)$ arises naturally as a scaling limit of discrete critical Galton-Watson trees. In particular, if $\T_n$ is a Galton-Watson tree with non-trivial, critical (mean one) offspring distribution with finite variance $\sigma^2$, conditioned to have at least $n$ vertices, $d_{\T_n}$ is the graph distance on $\T_n$, $\nu_{\T_n}$ is the counting measure, and $\rho_{\T_n}$ is the initial ancestor of $\T_n$, then
\[\left(\T_n,\frac{\sigma}{2\sqrt{n}}d_{\T_n},\frac1n\nu_{\T_n},\rho_{\T_n}\right)\overset{(d)}{\rightarrow} \left(\T,d_\T,\nu_\T,\rho_\T\right)\]
in distribution as $n \rightarrow \infty$ with respect to the Gromov-Hausdorff-Prohorov topology introduced below \autocite{AldousCRTI, AldousCRTII, LeG2005randomtreesandapplications}.

We note that sometimes different normalisations of the CRT are used in the literature, resulting in different scaling factors to those appearing above; here we are following the convention of Le Gall \autocite{LeG2005randomtreesandapplications}.

\subsection{Coding trees and looptrees}\label{sctn:coding}

\begin{figure}[t]
\includegraphics[width=\textwidth]{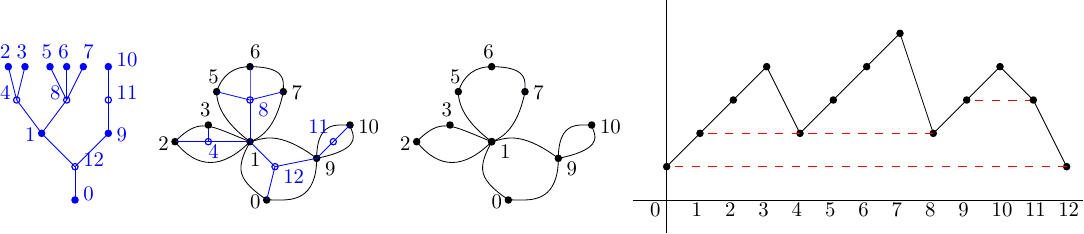}
\caption{A two-type tree, its looptree, and associated excursion.}\label{fig:contour coding looptree}
\end{figure}

As for the continuum random tree, it is possible to code two-type plane trees and looptrees using excursions. In particular, the kinds of excursions we now consider are discrete processes of the form $Z=(Z_i)_{i=0}^n$ with jump sizes $Z_{i}-Z_{i-1}$ in $\{1, -1, -2, -3, \ldots \}$, which also satisfy $Z_0=1$, $Z_i \geq 1$ for all $1 \leq i \leq n$, and $Z_n = 1$. We define an equivalence relation on $\{0, 1, \ldots, n\}$ by setting $i \sim j$ precisely if
\begin{equation}\label{eqn:contour equiv def}
Z_i = Z_j = \inf_{i \wedge j \leq k \leq i \vee j} Z_k,
\end{equation}
where we use $\wedge$ and $\vee$ to denote minimum and maximum, respectively. The looptree $L_Z$ is equal to the quotient space $\{0, 1, \ldots, n\} / \sim$. It is rooted at the vertex corresponding to (the equivalence class of) $0$. Again, see \cref{fig:contour coding looptree} for an example. Moreover, we let $T_Z$ be the unique two-type tree such that $L_Z = \Loop (T_Z)$; this can be constructed from $L_Z$ by adding a white vertex inside each loop of $L_Z$, adding an edge between each such white vertex to each of the black vertices on the corresponding loop, and then deleting the edges of the original loops. The root of $L_Z$ is also the root of $T_Z$.

This coding mechanism also assigns labels to the vertices of $T_Z$ and $L_Z$ in a natural way. Note that, apart from the root, each black vertex of the tree can be associated with the right endpoint of a positive jump in the coding function, and each white vertex can be associated with the right endpoint of a negative jump in the coding function. We therefore label each vertex in $T_Z$ with the time of the right endpoint of the corresponding jump, and similarly for all the vertices in $L_Z$. See \cref{fig:contour coding looptree} for an example.

Note that, since the labelling of $T_Z$ is readily deduced from the structure of the two-type tree itself (following the contour of the tree, black vertices are labelled on the first visit and white vertices are labelled on the last), $Z$ can easily be reconstructed from $T_Z$: $Z_i-Z_{i-1}$ is determined by the type and number of children of the vertex labelled $i$ in $T_Z$. This ensures that the correspondence between excursions of the form considered above and the set of rooted two-type plane trees with no white leaves is a bijection.

In this paper, we will specifically be interested in the case where $Z$ is obtained from a centred random walk excursion with some step distribution $\mu$ supported on $\{1, -1, -2, -3, \ldots \}$. To connect $Z$ to a two-type Galton-Watson tree, we make the following observation. Suppose $T$ is a two-type Galton-Watson tree with offspring distributions $\mu_{\bullet}$ and $\mu_{\circ}$ satisfying the following conditions.
\begin{enumerate}
\item The measure $\mu_0$ is supported on $\mathbb{N}$, i.e.\ $\mu_{\circ}(0)=0$.
\item The measure $\mu_{\bullet}$ is geometric, supported on the non-negative integers, so that $\mu_{\bullet}(k) = \mu_{\bullet}(0)(1-\mu_{\bullet}(0))^k$ for all $k \geq 0$.
\item It holds that $\E{\mu_{\bullet}}\E{\mu_{\circ}}=1$.
\end{enumerate}
\noindent
In addition let $\Zr = (\Zr_n)_{n \geq 0}$ be a centred random walk with $\Zr_0=1$ and increment distribution $\mur$ where
\begin{equation}\label{eqn:mu def general case background reversed}
\mur(i) = \begin{cases}
\mu_{\bullet}(0), & \text{ if } i=-1, \\
(1-\mu_{\bullet}(0))\mu_{\circ}(i), & \text{ if } i\geq 1, \\
0, & \text{ otherwise}.
\end{cases}
\end{equation}
Note that $\Zr$ is indeed centred, since
\[\E{\Zr_1-\Zr_0} = -\mu_{\bullet}(0) + \sum_{i \geq 1}i(1-\mu_{\bullet}(0))\mu_{\circ}(i) = \mu_{\bullet}(0)(\E{\mu_{\bullet}}\E{\mu_{\circ}}-1)=0.\]
Let $\tau_{0} = \inf\{n \geq 0: \Zr_n \leq 0\}$. Note that necessarily $\Zr_{\tau_{0}}=0$ and $\Zr_{\tau_{0}-1}=1$. We can therefore let $(Z_n)_{0 \leq n \leq \tau_{0} - 1}$ be the time reversal of $\Zr$ given by
\[Z_n = \Zr_{\tau_{0}-1-n}, \ \ \ \ 0 \leq n \leq \tau_{0}-1.\]
It follows by construction that $Z$ satisfies the properties listed above for coding a two-type tree.

\begin{prop}\label{prop:tree offspring RW}
Let $T_Z$ be the rooted two-type plane tree constructed from $Z$, defined from $\Zr$ as above. Let $T$ be a two-type Galton-Watson tree with offspring distributions $\mu_{\bullet}$ and $\mu_{\circ}$ as above. Then $T_Z \overset{(d)}{=} T$.
\end{prop}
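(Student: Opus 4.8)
The plan is to establish the distributional identity by showing that the two processes (equivalently, the two trees) are built from the same combinatorial/probabilistic recipe, using the bijection between excursions of the stated type and rooted two-type plane trees with no white leaves recorded in \cref{sctn:coding}. Since that correspondence is a bijection, and since $Z$ is reconstructible from $T_Z$ and vice versa, it suffices to show that the law of $Z$ (obtained by time-reversing the stopped walk $\Zr$) coincides with the law of the excursion coding $T$, i.e.\ that $Z_i - Z_{i-1}$ reads off the types and offspring numbers of a two-type Galton--Watson tree with laws $\mu_\bullet, \mu_\circ$ in the order dictated by the contour exploration.

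First I would recall the precise encoding: following the contour of a two-type tree with no white leaves, a black vertex contributes a $+1$ step to the coding function on its first visit (more precisely, the coding function increments reflect the \L ukasiewicz-type exploration in which a black vertex of out-degree $k$ contributes, via the geometric structure, and a white vertex of out-degree $i$ contributes a $-i$-type step). The key point is that conditions (1)--(3) imposed on $(\mu_\bullet, \mu_\circ)$ are exactly what make $\Zr$ a \emph{centred} random walk with increment law $\mur$ supported on $\{-1\} \cup \{1,2,3,\dots\}$: a $-1$ step occurs with probability $\mu_\bullet(0)$, corresponding to a black vertex having no further children, and a $+i$ step occurs with probability $(1-\mu_\bullet(0))\mu_\circ(i)$, corresponding to a black vertex having a (further) child which is white with out-degree $i$. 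The geometric assumption on $\mu_\bullet$ is precisely what allows the ``number of black children'' to be decomposed into a sequence of independent Bernoulli$(1-\mu_\bullet(0))$ trials, one per step of the walk, which is what makes the walk have i.i.d.\ increments; and the white vertex, conditioned to be created, has out-degree $\mu_\circ$-distributed and at least $1$. I would then run the standard \L ukasiewicz/depth-first queue argument: exploring the tree $T$ in contour order, maintaining a stack of ``vertices whose subtrees are not yet fully explored,'' the increments of the exploration chain are i.i.d.\ with law $\mur$ up to the first time the stack empties, which is the stopping time $\tau_0$; this is a direct two-type adaptation of the classical one-type depth-first encoding (e.g.\ \autocite[Section 1]{LeG2005randomtreesandapplications}). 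Running this exploration and then reversing time recovers exactly the excursion-coding of $T$ in the sense of \cref{sctn:coding}, hence $T_Z \overset{(d)}{=} T$.

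More concretely, the steps are: (i) verify that $\tau_0 < \infty$ a.s.\ (criticality of $(\mu_\bullet,\mu_\circ)$ makes $\Zr$ a centred, hence recurrent, walk, so it hits $0$), and that $\Zr_{\tau_0} = 0$, $\Zr_{\tau_0 - 1} = 1$ because the only negative increment is $-1$; (ii) describe the depth-first exploration of a two-type GW tree as producing a sequence of increments that are i.i.d.\ $\mur$-distributed, stopped at the first return of the ``remaining to explore'' counter to $0$, using the geometric/Bernoulli decomposition of $\mu_\bullet$ to handle the alternation of types within a single interval between consecutive white vertices; (iii) conclude that the time-reversal of this stopped walk is precisely a random excursion whose law is $N$-analogue for the discrete coding, and that $T_Z$, reconstructed from it via the bijection of \cref{sctn:coding}, has the law of $T$. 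The main obstacle — really a bookkeeping point rather than a deep one — is step (ii): making rigorous the claim that in a two-type tree where black out-degrees are geometric, the contour exploration increments genuinely form an i.i.d.\ sequence with law $\mur$, i.e.\ correctly accounting for the interleaving of the ``$-1$ for a terminated black vertex'' steps and the ``$+i$ for a newly revealed white child of out-degree $i$'' steps within the exploration of the children of a single black vertex. This is exactly where the memorylessness of the geometric distribution is used, and I would spell it out by writing the exploration as an alternating sequence of (Bernoulli decision: does this black vertex have another child?) and (if yes: sample its out-degree from $\mu_\circ$), observing these are mutually independent across all steps.
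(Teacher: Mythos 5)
Your proposal is correct, but it argues in the opposite direction to the paper. The paper's proof is a static computation: fix a finite two-type tree $t$ with no white leaves, use the bijection to write $\pr{T_Z=t}$ as the product of the i.i.d.\ increment probabilities $\prod_i \mur(f(v_i))$, and then regroup algebraically --- using that $\mu_\bullet$ is geometric and that $\sum_{u\in t_\bullet}k_u=\#t_\circ$, one has $(1-\mu_\bullet(0))^{\sum_u k_u}\prod_{u\in t_\bullet}\mu_\bullet(0)=\prod_{u\in t_\bullet}\mu_\bullet(k_u)$ --- to recognise the two-type Galton--Watson weight $\pr{T=t}$. You instead run the dynamic (\L ukasiewicz/depth-first) argument: explore the Galton--Watson tree $T$ and show its coding increments are i.i.d.\ with law $\mur$, using the Bernoulli decomposition afforded by the memorylessness of the geometric law, then invoke the bijection. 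Both hinge on exactly the same two facts (geometric $\mu_\bullet$ and the white-vertex count identity), so the content is equivalent; the paper's version is shorter and avoids the bookkeeping you flag in your step (ii), while yours makes the probabilistic role of memorylessness explicit. If you write yours out, be careful that the exploration order implicit in the coding is the \emph{reverse} of the labelling order of \cref{sctn:coding} (cf.\ the remark after \cref{lem:dis Rn1} about reverse lexicographical ordering of children); this is harmless because the two-type Galton--Watson law is invariant under reordering children, but it should be said, as should the standard device of extending the exploration increments to an infinite i.i.d.\ sequence so that the comparison with $\Zr$ stopped at $\tau_0$ is a genuine identity of laws of stopped paths. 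Your step (i) ($\tau_0<\infty$ a.s.\ by centredness, and $\Zr_{\tau_0}=0$ by skip-free descent) is fine.
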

\begin{proof}
Let $t$ be a finite two-type tree with no white leaves. Then, since the random walk encoding is a bijection, it follows by construction that
\[\pr{T_Z=t} = \prod_{i=0}^{|t|-1} \pr{\Zr_{i+1} - \Zr_{i} = f(v_i)} = \prod_{i=0}^{|t|-1} \mur(f(v_i)).\]
where $v_i$ is the tree vertex with label $|t| - 1 - i$, and
\[f(v_i) = \begin{cases} -1 &\text{ if } v_i \in t_{\bullet} \\
k_{v_i} &\text{ if } v_i \in t_{\circ}.\end{cases}\]
Note that, by definition, $\sum_{u \in t_{\bullet}}k_u = |t|_{\circ}$. By \eqref{eqn:mu def general case background reversed} we deduce that
\begin{align*}
\prod_{i=0}^{|t|-1} \mur(f(v_i)) &= \prod_{u \in t_{\bullet}} \overleftarrow{\mu}(-1) \prod_{u \in t_{\circ}} \overleftarrow{\mu}(k_u) \\
&=\prod_{u \in t_{\bullet}} \mu_{\bullet}(0) \prod_{u \in t_{\circ}} (1-\mu_{\bullet}(0))\mu_{\circ}(k_u) \\
&= (1-\mu_{\bullet}(0))^{\sum_{u \in t_{\bullet}} k_u} \prod_{u \in t_{\bullet}} \mu_{\bullet}(0) \prod_{u \in t_{\circ}} \mu_{\circ}(k_u) \\
&= \prod_{u \in t_{\bullet}} \mu_{\bullet}(k_u) \prod_{u \in t_{\circ}} \mu_{\circ}(k_u)\\
&=\pr{T=t},
\end{align*}
as required.
\end{proof}

\begin{rmk}
\begin{enumerate}
\item If $\tau_{0}=1$, then the corresponding tree consists only of the root vertex.
\item The reader may wonder why we didn't just define the trees and looptrees directly from excursions in the form of $\Zr$ and therefore remove the need to introduce a time-reversal. The reason is because the peeling process that we use to explore the cluster naturally encodes a random walk sharing the properties of $Z$. Therefore a time reversal needs to be applied at some stage of the argument and we found it simplest to work through the details in the present section.
\end{enumerate}
\end{rmk}

A consequence of this construction is that when $\mu_{\circ}$ has finite variance (as will be the case in this paper), there exists a constant $c$ such that
\begin{equation}\label{eqn:progeny asymp}
 \pr{|T| \geq x} = \pr{\tau_{0} \geq x}\sim cx^{-1/2}
\end{equation}
as $x \to \infty$ (the asymptotic on $\tau_0$ can be seen for example by combining  \autocite[Lemma 2.11 and 2.12]{le2012scaling}, which hold for general skip-free descending random walks, such as $\Zr$, or alternatively applying \autocite[Theorem 1]{DW}).

For technical reasons that will become apparent later when we consider $Z$ to be a certain random walk path, we will need to extend the definitions of $T_Z$ and $L_Z$ to excursions $(Z_i)_{i=0}^n$ satisfying the above conditions (those written above \eqref{eqn:contour equiv def}) but with the constraint $Z_n=1$ replaced by $Z_n\leq 0$. In the case where $Z_n \leq 0$, we extend $Z$ before time $0$ by setting $Z_i = i+1$ for $Z_n-1 \leq i \leq 0$, and denote the resulting excursion $Z^*$. We denote the resulting extended looptree (again obtained as a quotient space by applying \eqref{eqn:contour equiv def}) by $L_Z^*$, and the corresponding tree by $T_Z^*$. See \cref{fig:contour coding looptree extended}.\label{fa extended}

\begin{figure}[t]
\begin{center}
\medskip
\includegraphics[width=\textwidth]{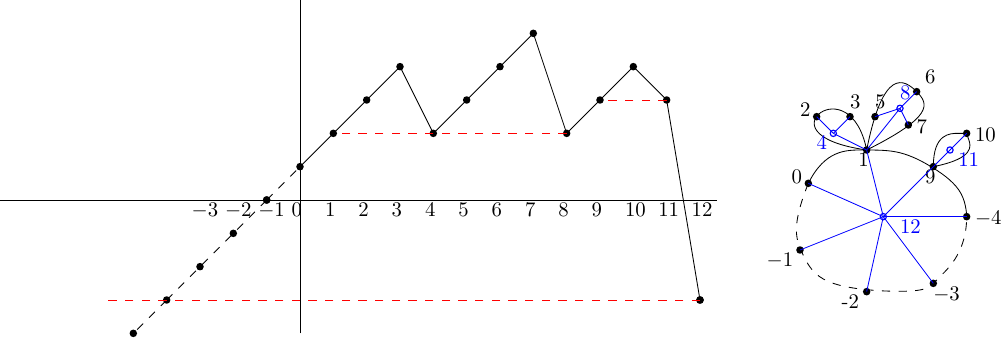}
\end{center}
\caption{An extended excursion $Z^*$ and its associated looptree $L_Z^*$.}\label{fig:contour coding looptree extended}
\end{figure}

In particular we will be interested in the case where $Z$ is a random walk started at $1$ and stopped at $\tau:=\inf \{n \geq 1: Z_n \leq 0\}$. We will allow large negative jumps so $Z_{\tau}$ can be strictly less than $0$, but it still follows from standard results on random walks (\cite[Theorem 1]{DW}, for example) that, similarly to \eqref{eqn:progeny asymp}, there exists a constant $c$ such that
\begin{equation}\label{eqn:progeny asymp 2}
\pr{\tau \geq x} \sim cx^{-1/2}.
\end{equation}

The reason for considering such extended excursions is as follows. In Section \ref{sctn:peeling example}, we will explore the boundary of a critical cluster using a Markovian exploration process and show that it can be encoded by a random walk excursion satisfying the assumptions on $Z$ as above, with $Z_n\leq 0$. The boundary of the cluster will (essentially) be of the form $L_Z^*$. Moreover, when we condition the cluster to be large, $T_Z^*$ is very similar to a two-type Galton--Watson tree (the only difference being this extension at the root face), so we will be able to apply general scaling limit results about decorated Galton--Watson trees to obtain the scaling limit of the critical cluster.

To apply results from the pre-existing literature in \cref{sctn:dec tree limits}, will also need to briefly introduce the \textit{depth-first} or \textit{height function} ordering. This can be defined using the Ulam-Harris notation introduced in \cref{sctn:GW background}: if $u=(u_1, \ldots, u_n)$ and $v=(v_1, \ldots, v_m) \in \mathcal{U}$, set $m(u,v) = \inf\{j \geq 1: u_j \neq v_j\}$. Then $u$ comes before $v$ in the depth-first ordering if and only if one of the following two conditions is satisfied:
\begin{enumerate}
\item $u$ is an ancestor of $v$,
\item $u_{m(u,v)} < v_{m(u,v)}$.
\end{enumerate}
Moreover, if $v_i$ denotes the $i^{th}$ vertex according to this ordering, we define $H_i$ to be the height of $v_i$ (i.e.\ its distance from the root).

\subsection{Decorated trees and looptrees}\label{sctn:dec trees}

We will show in \cref{sctn:cluster boundary looptree} that the boundary of the critical cluster essentially has the structure of a looptree generated by a two-type Galton-Watson tree, and that the whole cluster can be recovered by inserting critically percolated Boltzmann triangulations into the loops independently. (This is a consequence of the arguments of \autocite{ray2014geometry}.) To describe this kind of structure, we introduce the notion of a decorated two-type tree. This is similar to the decorated (one-type) trees considered in \autocite{archer2020random} and \autocite{senizergues2022decorated}, although here we focus on a different regime.

To construct the decorated tree, we will start by supposing that we have a sequence $(\Pb_n)_{n \geq 1}$ of distributions on random planar maps of the form $((G_n, d_n, m_n, \ell_n))_{n \geq 1}$, where, for all $n \geq 1$:\label{fa dec graph}
\begin{itemize}
  \item $G_n = (V_n, E_n)$ is a connected graph embedded in the plane with a simple boundary face of $n$ vertices (and edges);
  \item $d_n$ is a metric on $G_n$;
  \item $m_n$ is a measure on the vertices of $G_n$;
  \item the boundary vertices $\partial G_n$ come pre-equipped with a labelling $\ell_n: \partial G_n \to \{0, 1, \ldots, n-1\}$ representing the clockwise ordering of the boundary vertices when $G_n$ is embedded in the plane.
\end{itemize}
Moreover, we assume that the law of $G_n$ is invariant under a cyclic shift of the boundary labels. (We note that our construction and the associated theorems will hold more generally for graph families $G_n$ that are not planar; here we restrict to the planar case for simplicity.)

Given a two-type tree $t$ where all branches are of even length (i.e.\ all leaves are in $t_{\bullet}$), we then construct its decorated version $\Dec(t)$ via the following procedure. For each $u \in t_{\circ}$, let $G^{(u)}$ denote a copy of $(G_{\deg u}, d_{\deg u}, m_{\deg u}, \ell_{\deg u})$, where $\deg u:=k_u+1$ is the degree of $u$ within $t$; we suppose $G^{(u)}$ is sampled independently for each vertex. Informally, we form the decorated tree by replacing each vertex $u \in t_{\circ}$ with $G^{(u)}$, and identifying each boundary edge of $G^{(u)}$ with a neighbour of $u$ in $t$ such that the boundary vertex labelled $0$ is matched to the parent of $u$, the boundary vertex of $1$ is matched to the first child of $u$, etc. More formally, we define an equivalence relation on $\cup_{u \in t_{\circ}} G^{(u)}$ as follows. Firstly, for each $u \in t_{\circ}$ and each $0 \leq j \leq k_u$, let $g^{(u)}_j$ denote the boundary vertex of $G^{(u)}$ with label $j$. Each $v \in t_{\bullet}$ has precisely one parent (apart from the root, which has none) and $k_v\geq 0$ children; denote these $p(v)$ and $(c_j(v))_{1\leq j\leq k_v}$ respectively (where there are no children when $k_v=0$). Then for $x, y \in \cup_{u \in t_{\circ}} G^{(u)}$, set $x \sim y$ if and only if one of the following holds:
\begin{enumerate}
\item there exists $v \in t_{\bullet}$, $1 \leq i \leq k_{p(v)}$ and $1\leq j \leq k_v$ such that  $v=p(v)i$, $x=g^{p(v)}_i$ and $y = g^{c_j(v)}_0$ (or vice versa with the roles of $x$ and $y$ reversed);
\item there exists $v \in t_{\bullet}$ and $1\leq i < j \leq k_v$ such that $x=g^{c_i(v)}_0$ and $y = g^{c_j(v)}_0$ (or vice versa with the roles of $x$ and $y$ reversed).
\end{enumerate} The decorated tree is then equal to the space
\[\Dec (t) := \bigcup_{u \in t_{\circ}} G^{(u)} / \sim.\]\label{fa dec tree}

We endow $\Dec(t)$  with the metric $d$ obtained by adding contributions along the branches of $t$ in the natural way, and we denote this metric by $d$. Formally, if $x, y \in \Dec (t)$ with $x,y \in  G^{(u)}$ for some $u\in t_{\circ}$, then we simply define $d(x,y)=d_{G^{(u)}}(x,y)$, where $d_{G^{(u)}}$ represents the metric inherited from $G^{(u)}$. Otherwise, if $x \in  G^{(u)}$ and $y \in  G^{(v)}$ for $u,v \in t_{\circ}$ with $u\neq v$, we let $v_0, v_1, \ldots, v_n$ denote the shortest path of (underlying tree) vertices between $u$ and $v$ in $t_{\circ}$, so that $u=v_0$ and $v=v_n$. We then define $d$ by setting
\begin{equation}\label{eqn:decorated metric def}
\begin{split}
    d (x,y) = &d_{G^{(v_0)}}\left(x, b(v_0,v_1)\right) + \sum_{1 \leq i \leq n-1} d_{G^{(v_i)}}\left(b(v_{i-1},v_i), b(v_i, v_{i+1})\right) + d_{G^{(v_n)}}\left(b(v_{n-1},v_n), y\right),
\end{split}
\end{equation}
where $b(v_{i-1}, v_i)$ denotes $g_{0}^{v_{i}}$ if $v_i$ is a descendant of $v_{i-1}$ or if $v_i$ and $v_{i-1}$ are siblings, and $b(v_{i-1}, v_i)$ denotes $g_{0}^{v_{i-1}}$ if $v_{i-1}$ is a descendant of $v_{i}$. (We note that the choice of $u$ and $v$ in the definition of $d$ may not be unique if $x$ or $y$ are boundary vertices of the relevant graph, but the metric is nonetheless well-defined.)

We also define a measure $m$ on $\Dec (t)$ by setting, for each vertex $x \in \Dec (t)$,
\begin{equation}\label{eqn:decorated meas def}
m (x) = \sum_{u \in t_{\circ}} \mathbbm{1}_{\{[x] \cap G^{(u)} \neq \emptyset\}} m_{G^{(u)}}([x]),
\end{equation}
where $m_{G^{(u)}}([x])$ denotes the measure inherited from $G^{(u)}$, and $[x]$ refers to the equivalence class of $x$ under $\sim$ (or its relevant representative in $G^{(u)}$).

Finally, we define the root $\rho$ of $\Dec (t)$ to be the equivalence class of the root of $t$.

An example is shown in \cref{fig:decorated looptree}. Throughout the article we will use the general notation $(\Dec(t), d, m, \rho)$ to denote a quadruplet as constructed above.

\begin{figure}[t]
\includegraphics[width=\textwidth]{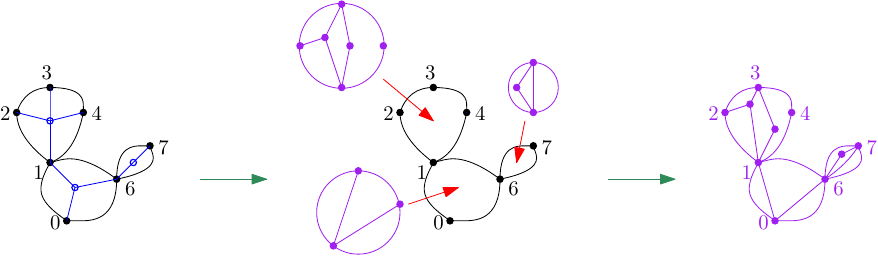}
\caption{A two-type tree and corresponding looptree, and a decorated version.}\label{fig:decorated looptree}
\end{figure}

\begin{rmk}\label{listrem}
The following points follow directly from the construction of a decorated tree.
\begin{enumerate}
\item If $d_n$ is the graph metric on $G_n$, then $d$ is also the graph metric on $\Dec (t)$.
\item If $d_n$ is the effective resistance metric on $G_n$ corresponding to unit conductances on the edges of $G_n$, then $d$ is the analogous effective resistance metric on $\Dec (t)$. (See \eqref{effres} later for a precise definition of effective resistance.)
\item If $m_n$ is the degree measure on $G_n$ (that is, $m_n(x)$ counts the number of vertices incident to the vertex $x$), then $m$ is the degree measure on $\Dec (t)$.
\item Up to a unit error, the counting measure on $\Dec (t)$ can be recovered as follows. For each vertex $x \in G_n$, let $m_n(x)=1$ except if $x$ is the boundary vertex such that $\ell_n(x)=0$; in the latter case, set $m_n(x)=0$. Apart from the root, every vertex $v \in \Dec (t)$ then has $m(v)=1$. We will obtain the scaling limit of decorated trees endowed with the counting measure using this formalism, since the difference at the root is negligible in the scaling limit.
\end{enumerate}
For the main results of this paper we will only consider the examples mentioned in this remark, but the results of \cref{sctn:dec tree limits} apply more generally.
\end{rmk}

\subsection{Brownian and random walk excursions}

Heuristically, a Brownian excursion of duration $\zeta$ is given by Brownian motion on the interval $[0,\zeta]$ conditioned to be zero at its endpoints and strictly positive in between. We refer to \autocite[Section IV.4]{BertoinLevy} for further background and a formal construction. We will use the notation $\B$ to denote a Brownian excursion in this paper. Due to its appearance in the construction of the CRT in \cref{sctn:CRT def}, it will be an important object in the present study. Later we will use the following two results to compare the probability of an event defined in terms of a Brownian excursion to that of the same event defined in terms of a Brownian motion.

\begin{lem}[Vervaat Transform] \autocite[Th\'eor\`eme 4]{Chaumont}. \label{lem:Vervaat}
\begin{enumerate}
\item Let $\B$ be a Brownian excursion conditioned to have lifetime $\zeta$ for some $\zeta \geq 1$. Take $U \sim$ \textsf{Uniform}$([0,\zeta])$. Then the process $(\Bb_t)_{0 \leq t \leq \zeta}$ defined by
\[\Bb_t = \begin{cases} \B_{U+t}-\B_U, & \text{ if } U+t \leq \zeta,\\
\B_{U+t-\zeta}-\B_U, & \text{ if } U+t > \zeta.
\end{cases}\]
has the law of a Brownian bridge on $[0,\zeta]$.
\item Now let $\Bb$ be a Brownian bridge on $[0,\zeta]$, and let $m$ be the (almost-surely unique) time at which it attains its minimum. Define an excursion $\B$ by
\[\B_t = \begin{cases} \Bb_{m+t}-\Bb_{m}, & \text{ if } m+t \leq \zeta,\\
\Bb_{m+t-\zeta}-\Bb_{m}, & \text{ if } m+t > \zeta.
\end{cases}\]
Then $\B$ has the law of a Brownian excursion of duration $\zeta$.
\end{enumerate}
\end{lem}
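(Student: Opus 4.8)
The plan is to prove both directions via a path-decomposition argument, reducing the statement at fixed duration $\zeta$ to the classical Vervaat identity at $\zeta=1$ by Brownian scaling, and then checking that the extra conditioning $\zeta \geq 1$ is harmless because it commutes with both transformations. First I would recall the standard statement: if $(\Bb_t)_{0\le t\le 1}$ is a standard Brownian bridge, $m$ its (a.s.\ unique) argmin, and one cyclically rotates the path by $m$ and re-anchors at zero, the result is a standard Brownian excursion of duration $1$; conversely, rotating a standard excursion by an independent uniform point on $[0,1]$ and re-anchoring gives a standard bridge. This is exactly \autocite[Th\'eor\`eme 4]{Chaumont}; I would cite it for $\zeta=1$. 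By Brownian scaling, $(\Bb_{\zeta t}/\sqrt\zeta)_{0\le t\le 1}$ is a standard bridge whenever $(\Bb_t)_{0\le t\le\zeta}$ is a bridge on $[0,\zeta]$, and the cyclic-rotation-plus-re-anchoring operation intertwines with this scaling (the argmin location scales by $\zeta$, the uniform point on $[0,\zeta]$ scales to a uniform point on $[0,1]$), so the fixed-$\zeta$ version of both parts follows immediately from the $\zeta=1$ case.

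The remaining point is the conditioning $\zeta\geq 1$ in part (1) and the requirement in part (2) that the output be an \emph{excursion of duration $\zeta$} (for the given $\zeta\ge 1$, which then feeds back into the CRT construction). For part (1): the law $N(\,\cdot\mid \zeta\ge 1)$ is, by disintegration of It\^o's measure over the lifetime, a mixture over $\zeta\ge 1$ of (normalised) excursions of duration $\zeta$; one samples $\zeta$ from the appropriate size-restricted lifetime law and then an excursion of that duration. Since the cyclic-rotation map preserves duration and, at each fixed duration $\zeta$, sends the excursion law to the bridge law on $[0,\zeta]$, averaging over the mixture shows that the output is a mixture over the same $\zeta$-law of bridges on $[0,\zeta]$ — which is precisely the statement (the ``Brownian bridge on $[0,\zeta]$'' in the lemma is understood with $\zeta$ itself random, inherited from $\B$). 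For part (2), one starts from a bridge on a \emph{fixed} $[0,\zeta]$, $\zeta\ge1$, and the fixed-$\zeta$ computation already gives an excursion of duration $\zeta$ directly, with no mixture needed.

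The small technical obstacle — really the only thing requiring care — is the almost-sure uniqueness of the argmin of the bridge (needed so that the inverse map in part (2), and the ``$m$'' in the statement, are well-defined a.s.); this is classical and follows from the fact that a Brownian bridge a.s.\ attains its minimum at a unique time, which one can extract from the analogous property of Brownian motion together with the absolute continuity of the bridge with respect to Brownian motion away from the endpoints, or cite directly. One should also note that the two maps in (1) and (2) are mutually inverse measure-preserving bijections between (excursion $\times$ uniform rotation) and (bridge $\times$ relative position of the argmin), so that the two parts are genuinely two halves of one bijective statement; I would remark on this but not belabour it. No step here is deep: the content is entirely in the cited $\zeta=1$ result, and the write-up is just scaling plus a disintegration bookkeeping argument.
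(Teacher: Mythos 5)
Your proposal is correct, but note that the paper itself offers no proof of \cref{lem:Vervaat} at all: the statement is simply quoted with the citation \autocite[Th\'eor\`eme 4]{Chaumont}, which already contains the Vervaat identity at fixed duration (indeed in the greater generality of stable L\'evy bridges), so there is no argument in the paper to compare against line by line. What you supply is a self-contained reduction that the paper leaves implicit: you take the classical unit-duration Vervaat theorem, transport it to duration $\zeta$ by Brownian scaling (observing that cyclic rotation and re-anchoring commute with the scaling, with the argmin and the uniform rotation point scaling accordingly), and then handle the conditioning $\zeta\geq 1$ by disintegrating the It\^o measure over the lifetime, since the rotation preserves duration and acts fibrewise. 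This is sound; the only interpretive point is whether ``conditioned to have lifetime $\zeta$ for some $\zeta\geq 1$'' means working at a fixed duration $\zeta$ or under the law $N(\cdot\mid\zeta\geq 1)$ with $\zeta$ random, and you correctly cover both readings (the fixed-$\zeta$ statement is what is actually used elsewhere in the paper, e.g.\ around \eqref{eqn:Vervaat cts}, where the bridge inherits the duration of the excursion). Your remarks on the a.s.\ uniqueness of the bridge argmin and on the two maps being mutually inverse are also accurate. In short: where the paper cites, you prove by scaling plus disintegration; either route is legitimate, and yours buys a proof that only invokes the $\zeta=1$ case of the classical result.
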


The probability of an event that depends on $(\Bb_t)_{t\in[0,T]}$ for some $T<\zeta$, where $\Bb$ is a Brownian bridge on $[0,\zeta]$, can be upper bounded using the probability of the same event for Brownian motion. Indeed, the law of $(\Bb_t)_{t\in[0,T]}$ is absolutely continuous with respect to the law of Brownian motion $(B_t)_{t\in[0,T]}$, with Radon-Nikodym derivative,
\begin{equation}\label{eqn:RN deriv Vervaat}
\frac{p_{\zeta-{T}}(-B_{T})}{p_{\zeta}(0)},
\end{equation}
where $p_t(x)$ is the transition density of Brownian motion defined with respect to Lebesgue measure on $\R$, see \autocite[Section VIII.3, Equation (8)]{BertoinLevy}. In conjunction with the first part of the previous lemma, this means that, for each fixed $\zeta$, we have for any $T<\frac{3}{4}\zeta$ and for any event $E$ defined on $C([0,T],\R)$ that there exists $C<\infty$ such that
\begin{align}\label{eqn:Vervaat cts}
\pr{E(\B_{[U,U+T]}-\B_U)} =\pr{E(\Bb_{[0,T]})}\leq C\pr{E(B_{[0,T]})},
\end{align}
where $U \sim$ \textsf{Uniform}$([0,\zeta])$, and we take the time indices modulo $\zeta$ as in \cref{lem:Vervaat}.

There is also a discrete version of the above conclusion. In particular, if $\X$ is a discrete time, centred, finite variance random walk excursion conditioned to return to $0$ for the first time at time $\zeta > 1$, and conditioned to be positive on the interval $(0, \zeta)$, then, provided this is not conditioning on a null event, there exists a constant $C<\infty$ (depending on the jump distribution of $\X$ but not on the choice of $\zeta$) such that for any $T<\frac{3}{4}\zeta$ and for any event $E$ defined in terms of a function from $[0,T]$ to $\R$
that
\begin{align}\label{eqn:Vervaat dis}
\pr{E(\X_{[U,U+T]}-\X_U)} \leq C\pr{E(X_{[0,T]})},
\end{align}
where $U \sim$ \textsf{Uniform}$(\{0, \ldots, \zeta\})$, and again we take the time indices modulo $\zeta$ as appropriate. This similarly holds if we take floor or ceiling functions for the indices in conjunction with $U \sim$ \textsf{Uniform}$([0,\zeta])$. This result follows since the discrete equivalent of \eqref{eqn:RN deriv Vervaat} is uniformly bounded under these conditions, as a consequence of the local limit theorem of \autocite[p.\ 233]{gk}.

\subsection{Gromov-Hausdorff-type topologies}\label{sctn:GHP topology}

We now introduce the Gromov-Hausdorff-type topologies that will be needed to state our main results precisely. We start with the pointed Gromov-Hausdorff-Prohorov (GHP) topology under which \cref{thm:main scaling lim} is stated. To this end, let $\mathbb{K}_c$ denote the set of quadruples $(K,d,\mu,\rho)$ such that $(K,d)$ is a compact metric space, $\mu$ is a locally-finite Borel measure on $K$, and $\rho$ is a distinguished point of $K$. Suppose $(K,d,\mu,\rho)$ and $(K',d',\mu',\rho')$ are elements of $\mathbb{K}_c$. Given a metric space $(M, d_M)$, and isometric embeddings $\phi, \phi'$ of $(K,d)$ and $(K',d')$, respectively, into $(M, d_M)$, we define $d_{M}\big((K,d,\mu,\rho, \phi), (K',d',\mu',\rho', \phi')\big)$ to be equal to
\begin{align*}
d_M^H(\phi (K), \phi' (K')) + &d_M^P(\mu \circ \phi^{-1}, \mu' \circ {\phi'}^{-1} ) + d_M(\phi (\rho), \phi' (\rho')).
\end{align*}
Here $d_M^H$ denotes the Hausdorff distance between two sets in $M$, and $d_M^P$ denotes the Prohorov distance between two measures, as defined in \autocite[Chapter 1]{BillsleyConv}, for example. The pointed Gromov-Hausdorff-Prohorov distance between $(K,d,\mu,\rho)$ and $(K',d',\mu',\rho')$ is then given by
\begin{equation}\label{eqn:GHP def}
\dGHP{(K,d,\mu,\rho), (K',d',\mu',\rho')} = \inf_{\phi, \phi', M} d_{M}\big((K,d,\mu,\rho, \phi), (K',d',\mu',\rho', \phi')\big)
\end{equation}
where the infimum is taken over all isometric embeddings $\phi, \phi'$ of $(X,d)$ and $(X',d')$, respectively, into a common metric space $(M, d_M)$. This defines a metric on the space of equivalence classes of $\mathbb{K}_c$ (see \autocite[Theorem 2.5]{AbDelHoschNoteGromov}, for example), where we say that two spaces $(K,d,\mu,\rho)$ and $(K',d',\mu',\rho')$ are equivalent if there is a measure and root preserving isometry between them. Moreover, $\mathbb{K}_c$ is a Polish space with respect to the topology induced by $d_{GHP}$ (again, see \autocite[Theorem 2.5]{AbDelHoschNoteGromov}, for example).

The pointed Gromov-Hausdorff (GH) distance $d_{GH}(\cdot, \cdot)$, defined by removing the Prohorov term from (\ref{eqn:GHP def}) above, can also be defined in terms of \textit{correspondences}. A correspondence $\mathcal{R}$ between $(K,d,\mu,\rho)$ and $(K',d',\mu',\rho')$ is a subset of $K \times K'$ such that for every $x \in K$, there exists $y \in K'$ with $(x,y) \in \mathcal{R}$, and similarly for every $y \in K'$, there exists $x \in K$ with $(x,y) \in \mathcal{R}$. We define the \textit{distortion} of a correspondence by
\[\textsf{dis} (\mathcal{R}) = \sup_{(x,x'), (y, y') \in \mathcal{R}} |d(x,y) - d'(x',y')|.\]
It is then straightforward to show that
\[\dGH{(K,d,\mu,\rho), (K',d',\mu',\rho')} = \frac{1}{2} \inf_{\mathcal{R}} \textsf{dis}(\mathcal{R}),\]
where the infimum is taken over all correspondences $\mathcal{R}$ between $(K,d,\mu,\rho)$ and $(K',d',\mu',\rho')$ that contain the point $(\rho, \rho')$ (see \autocite[Theorem 4.11]{evans2007probability}, for example). We further note that a correspondence comes with a canonical embedding in which the Hausdorff distance is at most $\frac{1}{2} \textsf{dis}(\mathcal{R})$. This is the space $K \sqcup K'$ endowed with the metric
\[D(x,y) = \begin{cases}
d(x,y), & \text{ if } x, y \in K, \\
d'(x,y), & \text{ if } x, y \in K', \\
\inf_{u, v \in \mathcal{R}} (d(x,u) + d'(y,v) + \frac{1}{2} \textsf{dis}(\mathcal{R})) & \text{ if } x \in K, y \in K'.
\end{cases}\]
(Again see \autocite[Theorem 4.11]{evans2007probability}, for example.) To prove \cref{thm:main scaling lim}, we will prove pointed Gromov-Hausdorff-Prohorov convergence by first proving pointed Gromov-Hausdorff convergence using correspondences, and then prove Prohorov convergence of measures on the associated canonical metric space embedding.

For the purposes of giving a precise statement of \cref{thm:main RW}, we extend the pointed Gromov-Hausdorff-Prohorov topology to incorporate c\'{a}dl\'{a}g paths using the general framework of \autocite{Khezeli}. (See also \autocite[Section 5]{CroydonER} for an earlier, related description of a topology on continuous paths built on length spaces.) In particular, we let $\tilde{\mathbb{K}}_c$ denote the set of quintuplets $(K,d,\mu,\rho,X)$, where $(K,d,\mu,\rho)\in \mathbb{K}_c$ and $X$ is a c\'{a}dl\'{a}g path from $[0,\infty)$ to $K$. Similarly to above, given a metric space $(M, d_M)$, and isometric embeddings $\phi, \phi'$ of $(K,d)$ and $(K',d')$, respectively, into $(M, d_M)$, we define $\tilde{d}_{M}\big((K,d,\mu,\rho, X,\phi), (K',d',\mu',\rho',X', \phi')\big)$ to be equal to
\begin{align*}
d_M^H(\phi (K), \phi' (K')) + &d_M^P(\mu \circ \phi^{-1}, \mu' \circ {\phi'}^{-1} ) + d_M(\phi (\rho), \phi' (\rho'))+d_M^{J_1}(\phi (X), \phi' (X')),
\end{align*}
where $d_M^{J_1}$ is the metrization of the Skorohod $J_1$-topology for c\'{a}dl\'{a}g paths on $M$ described in \autocite[Example 3.44]{Khezeli}. (See also \autocite[Section 2.3]{Noda} for further details.) Defining
\[d_{\tilde{\mathbb{K}}_c}\left({(K,d,\mu,\rho,X), (K',d',\mu',\rho',X')}\right) = \inf_{\phi, \phi', M} \tilde{d}_{M}\big((K,d,\mu,\rho,X, \phi), (K',d',\mu',\rho',X', \phi')\big),\]\label{fa dKc}
where again the infimum is taken over all isometric embeddings $\phi, \phi'$ of $(X,d)$ and $(X',d')$, respectively, into a common metric space $(M, d_M)$, yields a distance on $\tilde{\mathbb{K}}_c$. Moreover, $d_{\tilde{\mathbb{K}}_c}$ defines a metric on the space of equivalence classes of $\tilde{\mathbb{K}}_c$, where for two spaces $(K,d,\mu,\rho,X)$ and $(K',d',\mu',\rho',X')$ to be equivalent, we require a measure, root and c\`{a}dl\`{a}g path preserving isometry between them. As above, $\tilde{\mathbb{K}}_c$ is a Polish space with respect to the topology induced by $d_{\tilde{\mathbb{K}}_c}$. (In \autocite{Khezeli, Noda}, the metric and topology are built on a larger space, in which killing of the c\'{a}dl\'{a}g path components is allowed. It is easy to check that our space is a closed subset of this larger metric space, and so inherits the Polish property.)

\subsection{Conditionings}

Later, in \cref{sctn:main result proof}, it will be convenient to apply the following lemma to switch between various conditionings on the critical root cluster.

\begin{lem}\label{lem:switch conditioning}
Suppose that $(A_n)_{n \geq 1}$, $(B_n)_{n \geq 1}$ are two sequences of events on a probability space $(\Omega, \F, \Pb)$, where $\mathcal{F}$ is the Borel $\sigma$-algebra for some underlying metric, such that $\prcond{A_n}{B_n}{}=1+o(1)$ and $\prcond{B_n}{A_n}{}=1+o(1)$ as $n \to \infty$. Let $(X_n)_{n \geq 1}$ be a sequence of random variables on $\Omega$ such that $X_n$ given $A_n$ converges in distribution to a limiting random variable $X$ as $n \to \infty$. Then $X_n$ given $B_n$ also converges in distribution to $X$ as $n \to \infty$.
\end{lem}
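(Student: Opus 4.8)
The plan is to compare the law of $X_n$ under the conditional measure $\prstart{\cdot}{}(\cdot \mid B_n)$ with its law under $\prstart{\cdot}{}(\cdot \mid A_n)$, and to show these two laws are asymptotically indistinguishable in the sense of convergence in distribution. Since convergence in distribution to $X$ is equivalent to $\E{f(X_n)\mid A_n}\to\E{f(X)}$ for every bounded continuous $f\colon \Omega \to \R$ (indeed, for every bounded Lipschitz $f$), it suffices to show that, for such $f$, we have $\E{f(X_n)\mid B_n} - \E{f(X_n)\mid A_n}\to 0$ as $n\to\infty$.

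Fix a bounded continuous $f$ with $\|f\|_\infty \leq M$. First I would decompose $\E{f(X_n)\mid B_n}$ according to whether or not $A_n$ also occurs: writing
\[
\E{f(X_n)\mid B_n} = \E{f(X_n)\mathbbm{1}_{A_n}\mid B_n} + \E{f(X_n)\mathbbm{1}_{A_n^c}\mid B_n}.
\]
The second term is bounded in absolute value by $M\,\prcond{A_n^c}{B_n}{} = M(1 - \prcond{A_n}{B_n}{}) = o(1)$ by hypothesis. For the first term, I would rewrite
\[
\E{f(X_n)\mathbbm{1}_{A_n}\mid B_n} = \frac{\pr{A_n}}{\pr{B_n}}\,\E{f(X_n)\mathbbm{1}_{B_n}\mid A_n} = \frac{\pr{A_n}}{\pr{B_n}}\Big(\E{f(X_n)\mid A_n} - \E{f(X_n)\mathbbm{1}_{B_n^c}\mid A_n}\Big),
\]
using $\pr{A_n\cap B_n} = \pr{A_n}\prcond{B_n}{A_n}{} = \pr{B_n}\prcond{A_n}{B_n}{}$ for the first equality. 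Here $\E{f(X_n)\mathbbm{1}_{B_n^c}\mid A_n}$ is bounded by $M\,\prcond{B_n^c}{A_n}{} = o(1)$, and the prefactor satisfies $\pr{A_n}/\pr{B_n} = \prcond{A_n}{B_n}{}/\prcond{B_n}{A_n}{} = (1+o(1))/(1+o(1)) = 1+o(1)$ (note both $\pr{A_n}$ and $\pr{B_n}$ are eventually positive, since e.g.\ $\prcond{A_n}{B_n}{}\to1$ forces $\pr{A_n\cap B_n}>0$, hence $\pr{A_n},\pr{B_n}>0$, for large $n$). Combining, $\E{f(X_n)\mid B_n} = (1+o(1))\E{f(X_n)\mid A_n} + o(1)$; since $\E{f(X_n)\mid A_n}$ converges to the finite limit $\E{f(X)}$ and is therefore bounded, the right-hand side converges to $\E{f(X)}$. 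As $f$ was an arbitrary bounded continuous function, this gives $X_n$ given $B_n \overset{(d)}{\to} X$.

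There is no real obstacle here: the argument is a routine manipulation of conditional expectations, and the only mild point requiring care is ensuring all the conditional probabilities are well-defined, i.e.\ that $\pr{A_n}$ and $\pr{B_n}$ are strictly positive for $n$ large, which follows from the assumption that the two conditional probabilities tend to $1$. One could equivalently phrase the whole thing via total variation: the hypotheses imply $d_{TV}\big(\prstart{\cdot}{}(\cdot\mid A_n),\prstart{\cdot}{}(\cdot\mid B_n)\big)\to 0$, and convergence in distribution is preserved under vanishing total-variation perturbations; I would likely present the $f$-test-function version above as it is self-contained and avoids invoking a separate lemma about total variation.
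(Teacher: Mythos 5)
Your proof is correct and takes essentially the same route as the paper: your identity $\E{f(X_n)\mathbbm{1}_{A_n}\mid B_n}=\frac{\pr{A_n}}{\pr{B_n}}\,\E{f(X_n)\mathbbm{1}_{B_n}\mid A_n}$ is precisely the paper's step $\prcond{X_n \in E, B_n}{A_n}{}\,\prcond{A_n}{B_n}{}/\prcond{B_n}{A_n}{}$, just phrased with bounded continuous test functions instead of open sets and the Portmanteau theorem. The only cosmetic difference is that the paper proves a one-sided inequality and appeals to symmetry, whereas your test-function computation delivers both directions at once; both are complete.
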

\begin{proof}
Take any open set $E \in \F$. Then
\begin{align*}
\prcond{X_n \in E}{B_n}{}& \geq \prcond{X_n \in E}{B_n, A_n}{}\prcond{A_n}{B_n}{}\\
 &= \prcond{X_n \in E, B_n}{A_n}{} \frac{\prcond{A_n}{B_n}{}}{\prcond{B_n}{A_n}{}} \\
&\geq \left[\prcond{X_n \in E}{A_n}{} - \prcond{B_n^c}{A_n}{}\right] \frac{\prcond{A_n}{B_n}{}}{\prcond{B_n}{A_n}{}} \\
&= \prcond{X_n \in E}{A_n}{} - o(1).
\end{align*}
The reverse inequality also holds by symmetry. The result therefore follows by the Portmanteau theorem.
\end{proof}

\section{Hyperbolic maps and percolation}\label{sctn:the model}

In this section we formally define our model of site percolation on half-planar maps. We then explain how the boundary of the root cluster can be explored using a peeling process, and how it can be coded by a random walk excursion and a looptree similar to those appearing in \cref{sctn:looptrees}. Finally, we explain how this result can be used to give an equivalent construction of the cluster as a decorated tree (\cref{def:dec tree construction of cluster}).

\subsection{Half-planar maps}

A \textbf{planar map} is a connected planar graph embedded in the sphere (defined up to orientation preserving homeomorphisms). The \textbf{edges} and \textbf{vertices} of the map are those of the original graph, and the \textbf{faces} are the connected components of the complement of the embedded edges and vertices. A map with a \textbf{boundary} is a map in which one face is designated the \textbf{boundary face}, and the boundary of that map is the collection of edges and vertices incident to the boundary face. The boundary need not be simple, but in the case where the map is finite and the boundary face is an $m$-gon, we say that the map is a \textbf{map of the $m$-gon}. For example, the inserted graphs in \cref{fig:decorated looptree} are maps of the $4$-gon, $3$-gon and $2$-gon. We also say that an edge or a vertex is \textbf{internal} if it is not on the boundary. We will also be interested in \textbf{half-planar maps}. These are connected planar graphs embedded in the upper-half plane that have as a boundary a simple doubly-infinite path, which we will always assume has vertices given by $\mathbb{Z}$ and nearest neighbour edges. We will restrict to \textbf{triangulations}, i.e.\ planar/half-planar maps in which all faces, except possibly the boundary face, are triangles. We restrict to \textbf{rooted} planar/half-planar maps: that is, each map contains a single edge known as the root edge, which lies on the boundary. The root edge will be oriented from left to right (anticlockwise in the case of finite planar maps), and this orientation will be indicated with an arrow. The \textbf{root vertex} is the leftmost endpoint of the root edge.

The main focus of this paper is on \textit{random} rooted half-planar maps with laws satisfying two important properties: translation invariance and the domain Markov property. These are defined as follows. Given a rooted half-planar map $M$, we define $\theta (M)$ to be the rooted half-planar map that is the same as $M$ but such that the root edge of $\theta(M)$ is the boundary edge immediately to the right of the root edge of $M$. If $\mathbb{H}$ is a law on rooted half-planar maps, then $\mathbb{H}$ is said to be \textbf{translation-invariant} if $\mathbb{H}\circ\theta^{-1} = \mathbb{H}$. In addition, suppose that $Q$ is a map of the simple $m$-gon for some $m > 0$, and for $0<k<m$ define $A_{Q,k}$ to be the event that $M$ contains a submap that is isomorphic to $Q$ and contains the $k$ boundary edges immediately to the right of the root edge of $M$, but no other boundary edges or vertices, and is such that the root edge of $Q$ corresponds to the edge immediately to the right of the root of $M$. Then $\mathbb{H}$ satisfies the \textbf{domain Markov property} if, for any such $Q$ and $k$, conditionally on the event $A_{Q,k}$, the map $M \setminus Q$ also has law $\mathbb{H}$.\label{fa M}

In \autocite{angelray2015} the authors studied and classified all simple (equivalently, loopless) triangulations of the half-plane satisfying translation invariance and the domain Markov property. They showed that these maps form a one parameter family indexed by a parameter $\alpha \in [0,1)$, whereby $\alpha$ gives the probability that the triangle incident to a fixed boundary edge is incident to an internal vertex. In this paper, we will be interested in maps of the half-plane of hyperbolic flavour. This corresponds to the regime where $\alpha \in (\frac{2}{3}, 1)$, and we will denote the law of the corresponding map $\Ha$. See \autocite[Section 1]{angelray2015} for further background.

\subsection{Boltzmann Maps}

As well as random half-planar maps, we will need to consider probability measures on triangulations of the $m$-gon. Given a real number $q \in (0, \frac{2}{27}]$, the associated \textbf{Boltzmann measure} (see \autocite[Section 3.3]{curien2019peeling}, for example) on triangulations of the $m$-gon is defined by setting
\[\prstart{T}{m,q} = \frac{q^{\# T }}{\Zmq},\]
for any finite rooted triangulation of the $m$-gon $T$, where $\# T$ denotes the number of internal vertices of $T$ and $\Zmq$ is a normalising constant. The restriction that $q \leq \frac{2}{27}$ is exactly what is required to ensure that $\Zmq < \infty$ for all $m \geq 1$.

\subsection{Peeling of half-planar maps}

As alluded to above, the work \autocite{angelray2015} characterises the random maps with laws $(\Ha)_{\alpha \in [0,1)}$ in terms of their \textbf{peeling probabilities}. We will use the formalities of the peeling process introduced by Angel \autocite{angel2003growth} and developed by Curien \autocite{curien2019peeling} in order to explore unknown random planar maps by revealing faces one by one, although the origins of the technique date back to the physics literature \autocite{ambjorn1994quantization, watabiki1995construction}. This peeling process can be formally described using a \textbf{peeling algorithm}. In particular, for half-planar maps this algorithm starts by supposing the whole map $M$ to be unknown, apart from its infinite boundary face. The algorithm then continues by selecting (``peeling") the root edge and revealing the internal face incident to it. Denote this face $F$. On a triangulation, this can have one of the following outcomes.
\begin{itemize}
  \item If the third vertex of $F$ is an internal vertex, then $M \setminus F$ is another map of the half-plane. Appropriately rooted, by translation invariance and the domain Markov property, this has the same law as $M$.
  \item If the third vertex is instead a boundary vertex at distance $m \geq 1$ from the root edge, then removing $F$ from $M$ creates two connected components, exactly one of which is infinite. The infinite component has a simple boundary; appropriately rooted, by translation invariance and the domain Markov property, this has the same law as $M$. The other component has a finite simple boundary and is therefore a triangulation of the $(m+1)$-gon.
\end{itemize}
See \cref{fig:peeling options} for an illustration. The algorithm then proceeds to explore the infinite component of $M \setminus F$ by selecting a new edge to peel on the boundary of $M \setminus F$, and continues inductively.

\begin{figure}[t]
\centering
\includegraphics[width=0.7\textwidth]{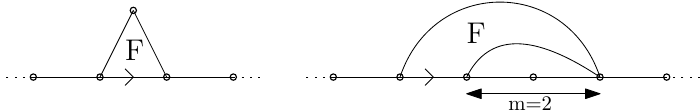}
\caption{Discovering an internal vertex (left) or another boundary vertex (right). The root edge is indicated by an arrow. The distance between the root edge and another boundary vertex is the number of edges on the boundary lying in between them.}\label{fig:peeling options}
\end{figure}

In \autocite{angelray2015}, the authors showed that for each $\alpha \in [0,1)$, there exists precisely one probability measure on maps of the half-plane, denoted by $\Ha$, satisfying translation invariance and the domain Markov property for which the probability of revealing an internal vertex on the first step of the peeling algorithm is exactly $\alpha$. In this paper we are only concerned with the regime where $\alpha \in (\frac{2}{3},1)$. For fixed $\alpha$, they also determined the probabilities of all of the possible peeling outcomes described above, and the law of the triangulation of the $(m+1)$-gon that can appear on discovering a boundary vertex. We summarise their results in \cref{prop:peeling probs}. In order to state this, for $m \geq 1$, let $p_m$ denote the probability that the third vertex of $F$ is on the boundary of $M$, at distance $m$ from the root edge (as in the right-hand image of \cref{fig:peeling options}), on either side of the root edge. (By symmetry, the probability that this happens specifically on the right or the left of the root is therefore $\frac{1}{2}p_m$.)

\begin{prop}\autocite[Equation (3.7)]{angelray2015}. \label{prop:peeling probs}
Let $\alpha \in (\frac{2}{3},1)$, and suppose $M\sim \Ha$. Moreover, let $F$ be the first face uncovered by the peeling algorithm.
\begin{enumerate}[(i)]
\item For each $m \geq 1$,
\[p_m = \frac{2}{4^m}\frac{(2m-2)!}{(m-1)!(m+1)!} \left(\frac{2}{\alpha} - 2 \right)^m ((3\alpha-2)m+1),\]
so that $p_m \sim c \left(\frac{2}{\alpha} - 2 \right)^m m^{-\frac{3}{2}}$ as $m \to \infty$ for a constant $c \in (0, \infty)$.
\item On the event that the third vertex of $F$ is on the boundary of $M$, at distance $m$ from the nearest endpoint of the root edge, then the finite component of $M \setminus F$ has the law of $\Pb_{m+1, q}$ with $q = \frac{1}{2} \alpha^2 (1-\alpha)$.
\end{enumerate}
\end{prop}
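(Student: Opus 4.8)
The statement is \autocite[Equation (3.7)]{angelray2015}, so in practice one simply cites it; but here is how I would reconstruct it. The key point to exploit is the already-recalled fact that $\Ha$ is \emph{the unique} translation-invariant law on simple half-planar triangulations satisfying the domain Markov property for which peeling the root edge reveals an internal apex with probability $\alpha$. Consequently it suffices to pin down the peeling law of \emph{any} measure with these three properties, and everything can be extracted from the domain Markov property together with a single self-consistency (``loop'') equation.

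I would prove (ii) first. Fix $m\geq 1$ and a finite simple triangulation $T$ of the $(m+1)$-gon with $\#T=n$ internal vertices, and consider the probability $\pi_m(T)$ that peeling the root edge reveals a face whose apex is the boundary vertex at distance $m$ to the right, with the finite component cut off equal to $T$. The union of this face with $T$ is a triangulation of the $(m+2)$-gon based at the root vertex, and it can be uncovered by the peeling algorithm; the domain Markov property forces the probability of fully revealing any fixed finite region to equal the product of the conditional probabilities of the successive peeling steps, each factor being determined by the type of the step performed (and in particular independent of the order in which the steps are carried out). Bookkeeping the steps shows each internal-vertex discovery contributes a fixed multiplicative constant and increments $n$ by one, so $\pi_m(T)=c_m\,q^{\,n}$ for constants $c_m,q$ depending only on $\alpha$ and not on $T$. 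Summing over $T$ gives $\tfrac12 p_m = c_m\sum_{n\geq0}(\#\{T:\ \#T=n\})q^n = c_m Z_{m+1,q}$, and identifies the conditional law of the finite component as the Boltzmann law with parameter $q$.

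For (i) and the value $q=\tfrac12\alpha^2(1-\alpha)$, I would write down the one-step peeling equation: conditioning on the outcome of peeling the root edge and using that the infinite component is again distributed as $\Ha$, the generating function $\sum_{m\geq1}p_m x^{m}$ satisfies a quadratic (Tutte-type) equation whose coefficients involve $\alpha$ and the disk generating functions $\sum_n Z_{m+1,q}z^n$, the latter being classically known in closed hypergeometric form for simple triangulations. Solving this, matching the two, and simplifying via elementary binomial identities yields the displayed formula for $p_m$ and the stated $q$; it is a useful check that this $q$ lies in $(0,\tfrac{2}{27}]$ exactly when $\alpha\in(\tfrac23,1]$, with $q=\tfrac{2}{27}$ at $\alpha=\tfrac23$, matching the admissible range of Boltzmann parameters. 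The asymptotic is then immediate from Stirling: since $\frac{(2m-2)!}{(m-1)!(m+1)!}=\frac{1}{m(m+1)}\binom{2m-2}{m-1}\sim \pi^{-1/2}4^{m-1}m^{-5/2}$, the factors of $4^{-m}$ cancel, the linear term $(3\alpha-2)m+1$ supplies one power of $m$, and the factor $(\tfrac2\alpha-2)^m$ survives, giving $p_m\sim \tfrac{3\alpha-2}{2\sqrt\pi}(\tfrac2\alpha-2)^m m^{-3/2}$; note this is a genuinely decaying geometric factor precisely because $\tfrac2\alpha-2\in(0,1)$ in the hyperbolic regime $\alpha>\tfrac23$.

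The main obstacle is the exact combinatorics rather than the structure: rigorously justifying the order-independence of the peeling probabilities, and, above all, carrying out the generating-function computation for simple (type-II) triangulations of a polygon and coaxing the resulting hypergeometric expression into the stated closed form, together with the algebra needed to extract $q=\tfrac12\alpha^2(1-\alpha)$. All of this is done carefully in \autocite{angelray2015}, which is why the cleanest presentation is to cite that work directly; the plan above merely indicates how one would redo it from the characterising axioms.
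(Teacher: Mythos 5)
Your proposal is correct and takes essentially the same route as the paper: both ultimately defer to Angel and Ray, the paper citing their equation (3.7) for part (i) and their Lemma 3.7 for part (ii) (adding only the observation that Euler's formula for triangulations converts their per-face weight into the per-internal-vertex weight $q=\tfrac{1}{2}\alpha^2(1-\alpha)$), and your reconstruction sketch — domain-Markov factorisation for (ii), the one-step consistency/generating-function computation for (i), Stirling for the asymptotics — is consistent with how that reference actually argues, with the genuinely hard step (establishing the multiplicative form $c_m q^n$ jointly with the form of the $p_k$) correctly deferred to the citation. One small slip in your side remark: $q=\tfrac{1}{2}\alpha^2(1-\alpha)\leq\tfrac{2}{27}$ for \emph{all} $\alpha\in[0,1]$, with equality only at $\alpha=\tfrac{2}{3}$ and $q=0$ at $\alpha=1$, so admissibility of $q$ is not equivalent to $\alpha\in(\tfrac{2}{3},1]$; this does not affect the argument.
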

\begin{proof} Part (i) is given at \autocite[(3.7)]{angelray2015}. Part (ii) follows from \cite[Lemma 3.7]{angelray2015}, using Euler's characteristic formula (and the fact we are dealing with triangulations) to replace the number of faces of the relevant map by the number of vertices.
\end{proof}

\subsection{Critical site percolation on $\Ha$}\label{sctn:peeling example}

We now introduce site percolation on a map with law $\Ha$ and explain how a percolation cluster can be explored via a peeling algorithm as is done in \autocite{ray2014geometry}. We fix $\alpha \in (\frac{2}{3}, 1)$ and $p \in (0,1)$, sample a map $M$ with law $\Ha$, and then independently colour each vertex of $M$ black with probability $p$ or white with probability $1-p$. We will impose the boundary condition that the root vertex is coloured black, and all other boundary vertices are coloured white. As is the case on $\Z^d$, Ray \autocite[Theorem 2.7]{ray2014geometry} showed there is a (deterministic) critical probability $p_c$ above which the root cluster has a positive probability of being infinite, and below which the root cluster is almost-surely finite, $\Ha$-almost-surely, and computes its explicit value to be that given at \eqref{pcdef}.

Below we outline an algorithm that explores the boundary of the root cluster in the above percolation model. We introduce a process $(B_i)_{i \geq 0}$ that records the evolution of the algorithm via the length of the black boundary. We will explain afterwards, in \cref{sctn:cluster boundary looptree}, how the sequence $(B_i)_{i \geq 0}$ can be used to reconstruct the boundary of the percolation cluster. We give an example configuration in \cref{fig:clusterexample}, and give three of the associated peeling steps in \cref{fig:peeling examples}. Note that Ray uses the same algorithm in the proof of \autocite[Theorem 2.7]{ray2014geometry} to compute the value of $p_c$. For the rest of the paper, we assume $p=p_c$.

\begin{figure}[t]
\includegraphics[width=\textwidth]{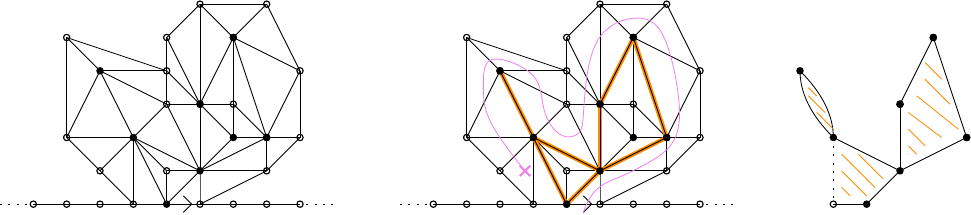}
\caption{On the left, a possible configuration for the root cluster. In the middle, the same configuration, but with the cluster boundary and peeling interface identified. On the right, the final structure discovered by the peeling process. Note that the only difference with the true cluster boundary is in the size of the final face.}\label{fig:clusterexample}
\end{figure}

\begin{figure}[t]
\includegraphics[width=\textwidth]{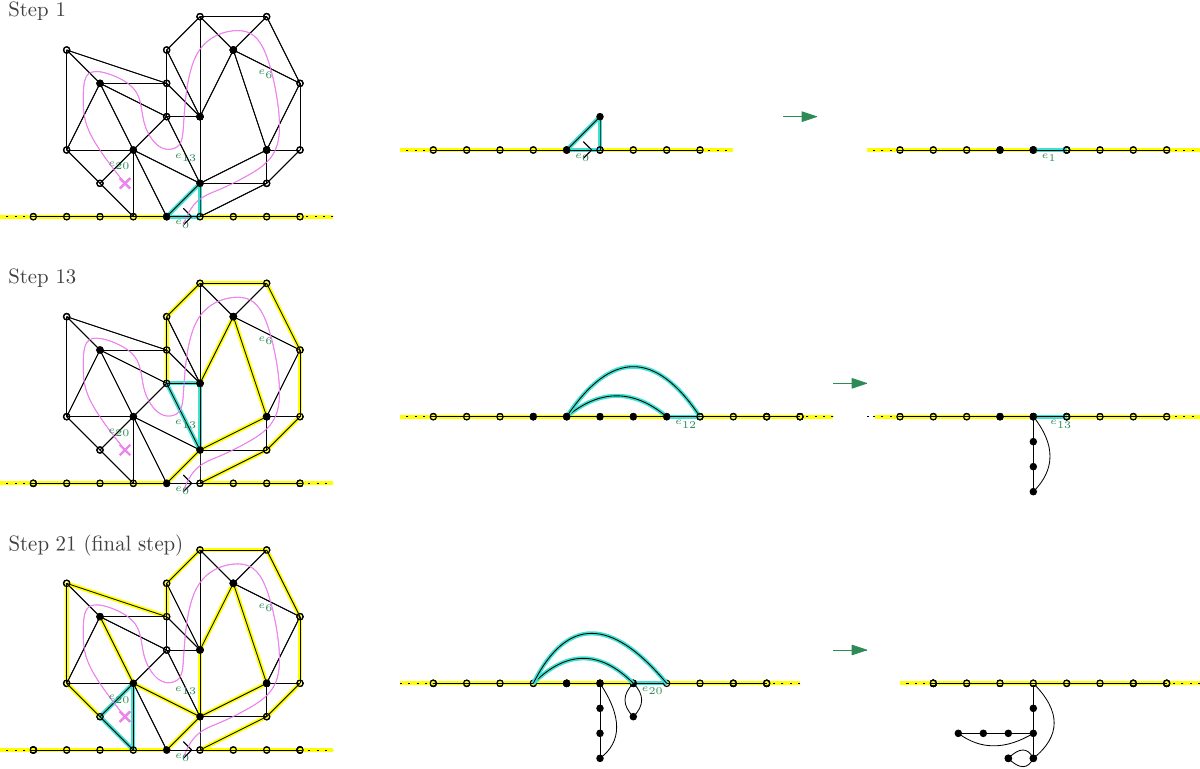}
\caption{Three selected peeling steps of the cluster given in previous figure. On the left hand side we show how the exploration evolves along the cluster boundary (this is for illustrative purposes, since the boundary and the interface are not known in advance of the peeling). In the middle we show the face discovered at the start of the given peeling step, and on the right hand side we show how this is updated after the peeling step.}\label{fig:peeling examples}
\end{figure}

\begin{enumerate}
\item Set $\hat{T}=\infty$, set $B_0=1$, set $M_0 = M$, and select the root edge to be $e_0$, the first edge to peel. Note that the leftmost endpoint of the root edge is black, and the rightmost endpoint is white.\label{fa B}
\item Peel $e_0$; that is, reveal the triangle adjacent to $e_0$. Denote this face $F_0$.
\item If the third vertex of $F_0$ is internal and is white, define $e_1$ to be the edge joining this new vertex to the black endpoint of the root edge, and set $B_1=1$. If it is internal and black, define $e_1$ to be the edge joining this vertex to the white endpoint of the root edge and set $B_1=2$. Otherwise, the third vertex of $F_0$ is a boundary vertex. If it is white and to the right of $e_0$ at some distance $m \geq 1$ from $e_0$, we define $e_1$ to be the edge joining this vertex to the black endpoint of $e_0$, and set $B_1=1$. Otherwise, it must be white and to the left of the root at some distance $m \geq 1$ from $e_0$; in this case we terminate the algorithm, set $B_1=1-m$, and redefine $\hat{T} = 1$. On the event that we have not terminated the algorithm, define $M_1$ to be the unique infinite connected component of $M_0 \setminus F_0$, rooted at $e_1$.
\item We proceed inductively. If we did not terminate the algorithm on the $i^{th}$ peeling step (i.e. we did not set $\hat{T}=i$), we instead obtained a map $M_i$ with law $\Ha$ and with root edge $e_i$ such that $e_i$ is on the boundary of $M_i$, and such that the leftmost endpoint of $e_i$ is black, and the rightmost endpoint is white. We also obtained a number $B_i \geq 1$. Moreover, we note that all vertices to the right of $e_i$ are white, whilst there is a string of length $B_i$ of black vertices starting on the left of $e_i$ (this follows by induction). Now peel $e_i$; that is, reveal the internal triangle adjacent to $e_i$, and denote this face $F_i$.
\item If the third vertex of $F_i$ is internal and is white, define $e_{i+1}$ to be the edge joining this vertex to the black endpoint of the root edge, and set $B_{i+1} = B_i$. If it is internal and black, define $e_{i+1}$ to be the edge joining this vertex to the white endpoint of the root edge, and set $B_{i+1} = B_i + 1$. Otherwise, the third vertex of $F_i$ is a boundary vertex of $M_i$. If it is white and to the right of $e_i$, at distance $m \geq 1$ from $e_i$, we define $e_{i+1}$ to be the edge joining this vertex to the black endpoint of $e_i$, and set $B_{i+1} = B_i$. If it is black and to the left of $e_i$, at distance $m \geq 1$ from $e_i$, we define $e_{i+1}$ to be the edge joining this vertex to the white endpoint of $e_i$, and set $B_{i+1} = B_i-m$. Otherwise, it must be white and to the left of the $e_i$, at some distance $m\geq B_i$ from $e_i$; in this case we terminate the algorithm and set $\hat{T} = i+1$ and $B_{i+1} = B_i-m$. On the event that we have not terminated the algorithm, define $M_{i+1}$ to be the unique infinite connected component of $M_i \setminus F_i$.
\item Repeat steps 4 and 5 until the algorithm is terminated. (By \autocite[Theorem 2.7]{ray2014geometry}, this happens almost-surely when $p$ is critical.)
\end{enumerate}
As noted in step 4 above, it follows by construction that $B_i$ gives the number of black boundary vertices at the beginning of the $(i+1)^{st}$ peeling step. This will be strictly positive precisely up until we terminate the algorithm by swallowing all of the black boundary vertices when we discover a white vertex to the left in the final peeling step. It therefore follows that
\[\hat{T} = \inf \{i \geq 0: B_i \leq 0\}.\]
Moreover, the cluster boundary can be recovered from the final step of the peeling algorithm as it given by the structure ``hanging off" the boundary (see the bottom right picture in \cref{fig:peeling examples} and the right picture in \cref{fig:clusterexample}). We make this precise in the next subsection.

\subsection{Reconstructing the cluster boundary from $(B_i)_{i \geq 0}$}\label{sctn:cluster boundary looptree}

We will use the function $(B_i)_{i \geq 0}$ to define a contour function of a looptree. It turns out that this looptree will (almost) be the boundary of the root cluster. In fact, the looptree will have exactly the same structure as the structure hanging off the boundary vertex after the final peeling step, for example as shown in \cref{fig:peeling examples}. In the figure, we have shown how the cluster faces ``hang off'' the boundary after they are discovered, and are then carried through the rest of the peeling steps. This picture is good to have in mind for intuition, but the final structure can be recovered directly from the process $(B_i)_{i \geq 1}$, as we now describe.

First, given $\hat{T}$ (the termination time) and the sequence $(B_i)_{0 \leq i \leq \hat{T}}$, we define a sequence of stopping times $(\tau_i)_{i =0}^{\infty}$ by setting $\tau_0=0$ and
\[\tau_i = \inf \{j \geq \tau_{i-1}:\: B_j \neq B_{\tau_{i-1}}\} \wedge \hat{T}, \hspace{1cm} i \geq 1.\]
We then define the contour function $Z$ by setting $Z_i = B_{\tau_i}$ for all $0 \leq i \leq \hat{\tau}$, where $\hat{\tau} = \inf\{i \geq 0: \tau_i = \hat{T}\}$. The following proposition will be crucial to our analysis.

\begin{prop}\label{prop:C is centred RW}
When $p=p_c$, $Z$ is a centred random walk with i.i.d.\ increments, started from 1, and terminated the first time it hits $\{x \leq 0\}$. More precisely, the distribution of the increments of $Z$, $\mu=(\mu(i))_{i\in\{1,-1,-2,\dots\}}$, is given by
\begin{equation}\label{eqn:mu def}
\mu(i):=\left\{
         \begin{array}{ll}
           c_\alpha \alpha p_c, & \hbox{if $i=1$;} \\
           \frac12 c_\alpha p_{-i}, & \hbox{if $i=-1,-2,\dots$,}
         \end{array}
       \right.
\end{equation}
where
\[c_\alpha:=2\left(1-\sqrt{\alpha(3\alpha-2)}\right)^{-1}.\]
In particular, the increments of $Z$ have exponentially decaying tails.
\end{prop}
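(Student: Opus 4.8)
The plan is to analyse a single step of the peeling exploration of \cref{sctn:peeling example}, show that the black-boundary-length process $(B_i)_{i\ge 0}$ agrees, up to the termination time $\hat T$, with a genuine random walk, and then deduce that $Z$ (being the chain of values of $B$ at its jump times) is itself a random walk whose step law can be computed explicitly and matched to \eqref{eqn:mu def}.

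First I would run through the peeling transitions in steps~4--5 of the algorithm, tracking which vertices are absorbed into the finite component, their colours, and the colours of the endpoints of the new root edge $e_{i+1}$. Using the inductively-maintained structure (all vertices strictly to the right of $e_i$ white, and a run of exactly $B_i$ black vertices starting at the left endpoint of $e_i$), one checks case by case that: revealing an internal white vertex, or a right-boundary vertex at any distance, leaves $B$ unchanged and preserves the structural hypothesis; revealing an internal black vertex increases $B$ by $1$; and revealing a left-boundary vertex at distance $m\ge 1$ changes $B$ by $-m$, the absorbed vertex being black (and the step non-terminating, $B_{i+1}=B_i-m\ge 1$) when $m<B_i$, and being the terminating white vertex when $m\ge B_i$. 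The point to emphasise is that the \emph{value} of the increment of $B$ is a function only of the outcome of the current peeling step, while the current value $B_i$ enters only in deciding whether the algorithm terminates. By the domain Markov property, conditionally on the exploration so far and on $\{i<\hat T\}$ the map $M_i$ has law $\Ha$, so \cref{prop:peeling probs} applies unchanged; together with the independence of the i.i.d.\ Bernoulli($p$) colours of freshly-revealed internal vertices, this shows that $(B_i)_{i\le\hat T}$ has the law of $(S_i)_{i\le\hat T}$, where $S$ is a random walk started at $S_0=1$ whose i.i.d.\ increments $\xi$ satisfy $\xi=+1$ with probability $\alpha p$, $\xi=0$ with probability $\alpha(1-p)+\tfrac12\sum_{m\ge1}p_m$, and $\xi=-m$ with probability $\tfrac12 p_m$ for each $m\ge1$ (these sum to $1$ since $\sum_{m\ge1}p_m=1-\alpha$), and $\hat T=\inf\{i:S_i\le 0\}$.

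Since $Z_i=B_{\tau_i}$ records $S$ exactly at the times its value changes, its jump chain is a random walk started at $1$ with step law that of $\xi$ conditioned on $\{\xi\ne0\}$, run until its first passage into $\{x\le0\}$ (indeed $Z_{\hat\tau}=B_{\hat T}\le0$ while $Z_i\ge1$ for $i<\hat\tau$); this is exactly the stated structure of $Z$. The normalising constant is $\alpha p+\tfrac12\sum_{m\ge1}p_m=\alpha p+\tfrac12(1-\alpha)$, so $\mu(1)=\alpha p/(\alpha p+\tfrac12(1-\alpha))$ and $\mu(-m)=\tfrac12 p_m/(\alpha p+\tfrac12(1-\alpha))$. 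Specialising to $p=p_c$ and substituting \eqref{pcdef}, one has $2\alpha p_c=\alpha-\sqrt{\alpha(3\alpha-2)}$, hence $2\alpha p_c+1-\alpha=1-\sqrt{\alpha(3\alpha-2)}$, so the normalising constant equals $\big(1-\sqrt{\alpha(3\alpha-2)}\big)/2=c_\alpha^{-1}$; this gives precisely \eqref{eqn:mu def} (with $p_{-i}=p_m$ when $i=-m$). Exponential tails of $\mu$ follow from $\mu(-m)=\tfrac12 c_\alpha p_m$ and the asymptotic $p_m\sim c(2/\alpha-2)^m m^{-3/2}$ of \cref{prop:peeling probs}(i), since $2/\alpha-2\in(0,1)$ for $\alpha\in(\tfrac23,1)$ (and upward jumps are bounded by $1$). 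For centering, $\E{Z_1-Z_0}=c_\alpha\big(\alpha p_c-\tfrac12\sum_{m\ge1}m p_m\big)$, so it remains to check $\sum_{m\ge1}m p_m=2\alpha p_c$; this is either a short generating-function evaluation from the explicit expression for $p_m$ (the relevant sum reducing to the Catalan generating function), or simply the observation that $p_c$ is the value at which the mean increment of this exploration walk vanishes, which is exactly how \autocite[Theorem 2.7]{ray2014geometry} identifies the critical point.

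The step I expect to be most error-prone is the case analysis of the peeling move: verifying that each transition preserves the invariant ``left endpoint of $e_{i+1}$ black, right endpoint white, with $B_{i+1}$ consecutive black vertices to its left'', that right-boundary events absorb only white vertices, and that a left-boundary event at distance $m<B_i$ removes exactly $m$ black vertices; and, relatedly, confirming that conditioning on $\{i<\hat T\}$ does not bias the increment of $B$, which is what licenses describing $Z$ as a bona fide random walk rather than a state-dependent chain. Once this bookkeeping is in place, the identification of $\mu$ and the arithmetic with $p_c$ are routine.
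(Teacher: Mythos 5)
Your proposal is correct and follows essentially the same route as the paper: identify $(B_i)$ as a stopped random walk via the domain Markov property and \cref{prop:peeling probs}, pass to $Z$ as its jump chain by the strong Markov property, compute the normalising constant $c_\alpha$ from \eqref{pcdef}, and read off exponential tails from the asymptotics of $p_m$. The only minor difference is at the centering step, where the paper imports the identity $\sum_{m\geq 1}mp_m=\alpha-\sqrt{\alpha(3\alpha-2)}$ from the proof of \autocite[Lemma 4.2]{ray2014geometry}, whereas you propose either a direct generating-function evaluation or appealing to Ray's characterisation of $p_c$ as the zero of the mean increment — both of which are adequate and amount to the same input.
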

\begin{proof}
It follows by construction that $(B_i)_{i\geq 0}$ is a random walk started from 1 and stopped when it is no longer strictly positive, whose increments satisfy: $B_{i+1}-B_i = 1$ if the $(i+1)^{st}$ peeling step discovers an internal black vertex, $B_{i+1}-B_i=-m$ if the next peeling step discovers a boundary vertex to the left of the root edge at distance $m$ from the root (i.e.\ with precisely $m$ edges in between the left-most endpoint of the root edge and the newly-discovered vertex), and $B_{i+1}-B_i=0$ otherwise. Additionally, the probabilities of these first two outcomes are given by $\alpha p_c$ and $\frac12 p_m$, respectively.

From the observations of the previous paragraph and the strong Markov property, we obtain that $Z$ is also a random walk started from 1 and stopped when it is no longer strictly positive. Moreover, its increment distribution is as given in the statement of the lemma, with the normalising constant computed as follows:
\[c_\alpha^{-1}=\alpha p_c +\frac12 \sum_{m\geq 1}p_m=\alpha p_c+\frac12(1-\alpha)=\frac12\left(1-\sqrt{\alpha(3\alpha-2)}\right),\]
where the final equality comes from the description of $p_c$ at \eqref{pcdef}. Concerning the centring of the increments, we use that
\[\sum_{m\geq 1} m p_m = \alpha - \sqrt{\alpha(3\alpha-2)},\]
which was used in the proof of \autocite[Lemma 4.2]{ray2014geometry}, to deduce that
\[c_\alpha^{-1}\sum_{m= 1,-1,-2,\dots}m\mu_m=\alpha p_c-\frac12\sum_{m\geq 1}mp_m=0,\]
as required. (Again, the final equality comes from the description of $p_c$ at \eqref{pcdef}.) Finally, it follows from \cref{prop:peeling probs} that the jumps of $Z$ have exponentially decaying tails.
\end{proof}

\begin{rmk}
Since $(Z_i)_{i \geq 0}$ tracks the evolution of the black boundary length (at points of increase or decrease in the peeling algorithm), it follows that the cluster has a positive chance of being infinite precisely when $\E{Z_1-Z_0}>0$, and therefore that $\E{Z_1-Z_0}=0$ precisely when $p$ is critical. This is the strategy Ray uses to identify the value of $p_c$ in \autocite{ray2014geometry}. However, as we do in the previous proof, one can also verify this by substituting the value of $p_c$ back into the relevant sum.
\end{rmk}

Note that $Z$ has the properties required to define an extended looptree, as described in \cref{sctn:coding} and illustrated in \cref{fig:contour coding looptree extended}. Moreover, let $\CC$ be the root cluster of the critical percolation process. This can be obtained by taking the structure ``hanging off'' the boundary at the end of the final peeling step, and filling it in with percolated Boltzman triangulations. In particular, up to the discrepancy of the final loop, the boundary $\partial \CC$ is the extended looptree $L_Z^*$ constructed from $Z$, as described in \cref{sctn:coding} (see the right image of \cref{fig:clusterexample}, which shows precisely $L_Z^*$, and the bottom right image of \cref{fig:peeling examples}, which shows how this is obtained in the final peeling step). The following proposition gives the offspring laws of the two-type Galton-Watson tree associated with the reversed process $\overleftarrow{Z}$.

\begin{prop}\label{prop:tree distribution}
Let $Z$ be as defined above and $p=p_c$. Let $\overleftarrow{Z}$ be the random walk with step distribution $\overleftarrow{\mu}(i)=\mu(-i)$. The two-type Galton-Watson looptree associated with an excursion of $\overleftarrow{Z}$ as in \cref{prop:tree offspring RW} (with offspring laws as around \eqref{eqn:mu def general case background reversed}) has offspring laws given by
\[\mu_{\circ}(m) = \frac{p_{m}}{1-\alpha}, \:m \geq 1, \hspace{1cm} \mu_{\bullet}(m) = \frac{2\alpha p_c}{1-\alpha+2\alpha p_c}\left(\frac{1-\alpha}{1-\alpha+2\alpha p_c}\right)^{m}, \:m \geq 0.\]
\end{prop}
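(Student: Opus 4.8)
The plan is to recognise $\overleftarrow{\mu}$ as a step distribution of exactly the form appearing in \eqref{eqn:mu def general case background reversed}, and then invoke \cref{prop:tree offspring RW} directly. Since $\overleftarrow{\mu}(i)=\mu(-i)$, reading off from \eqref{eqn:mu def} gives $\overleftarrow{\mu}(-1)=\mu(1)=c_\alpha\alpha p_c$ and $\overleftarrow{\mu}(m)=\mu(-m)=\tfrac12 c_\alpha p_m$ for each $m\geq 1$, with $\overleftarrow{\mu}$ supported on $\{-1,1,2,3,\dots\}$. Comparing term by term with \eqref{eqn:mu def general case background reversed}, this forces the identifications $\mu_{\bullet}(0)=c_\alpha\alpha p_c$ and $(1-\mu_{\bullet}(0))\mu_{\circ}(m)=\tfrac12 c_\alpha p_m$ for $m\geq 1$, so the remaining task is to solve for $(\mu_{\bullet},\mu_{\circ})$ and to verify that they meet the hypotheses of \cref{prop:tree offspring RW}.

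For this, I would first recall the identity $c_\alpha^{-1}=\alpha p_c+\tfrac12(1-\alpha)$ established in the proof of \cref{prop:C is centred RW}, which yields $1-\mu_{\bullet}(0)=1-c_\alpha\alpha p_c=c_\alpha(c_\alpha^{-1}-\alpha p_c)=\tfrac12 c_\alpha(1-\alpha)$. Substituting this back gives $\mu_{\circ}(m)=p_m/(1-\alpha)$ for $m\geq1$; since $\sum_{m\geq1}p_m=1-\alpha$ (used in that same proof), this is a probability measure supported on the positive integers, so condition (1) in the set-up of \cref{prop:tree offspring RW} holds. Taking $\mu_{\bullet}$ geometric with $\mu_{\bullet}(k)=\mu_{\bullet}(0)(1-\mu_{\bullet}(0))^k$ makes condition (2) hold by construction, and $\mu_{\bullet}(0)=c_\alpha\alpha p_c\in(0,1)$ since $\alpha\in(\tfrac23,1)$. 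For condition (3), I would argue that because $Z$ is centred (\cref{prop:C is centred RW}) so is $\overleftarrow{Z}$, i.e.\ $\sum_i i\,\overleftarrow{\mu}(i)=0$; combining this with the computation of $\E{\overleftarrow{Z}_1-\overleftarrow{Z}_0}$ displayed just before \cref{prop:tree offspring RW} (which equals $\mu_{\bullet}(0)(\E{\mu_{\bullet}}\E{\mu_{\circ}}-1)$) together with $\mu_{\bullet}(0)>0$ gives $\E{\mu_{\bullet}}\E{\mu_{\circ}}=1$.

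With the hypotheses in place and $\overleftarrow{\mu}$ matching \eqref{eqn:mu def general case background reversed} for this choice of $(\mu_{\bullet},\mu_{\circ})$, \cref{prop:tree offspring RW} shows that the two-type Galton-Watson tree (hence its looptree) coded by an excursion of $\overleftarrow{Z}$ has offspring laws $\mu_{\bullet}$ and $\mu_{\circ}$. It then only remains to rewrite $\mu_{\bullet}$ in the stated form: from $\mu_{\bullet}(0)=c_\alpha\alpha p_c$ and $c_\alpha^{-1}=\alpha p_c+\tfrac12(1-\alpha)$ one gets $\mu_{\bullet}(0)=\frac{\alpha p_c}{\alpha p_c+\frac12(1-\alpha)}=\frac{2\alpha p_c}{1-\alpha+2\alpha p_c}$ and hence $1-\mu_{\bullet}(0)=\frac{1-\alpha}{1-\alpha+2\alpha p_c}$, so that $\mu_{\bullet}(m)=\mu_{\bullet}(0)(1-\mu_{\bullet}(0))^m$ is exactly the claimed expression. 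I do not anticipate a genuine obstacle here: the proof is essentially bookkeeping around the normalising constant $c_\alpha$ and the elementary sums $\sum_{m\geq1}p_m=1-\alpha$ and $\sum_{m\geq1}mp_m=\alpha-\sqrt{\alpha(3\alpha-2)}$ (the latter quoted from \autocite{ray2014geometry}). If one prefers not to appeal to centring, condition (3) can alternatively be checked by hand, using these two identities and the value of $p_c$ at \eqref{pcdef} to see directly that $\E{\mu_{\bullet}}\E{\mu_{\circ}}=\frac{\alpha-\sqrt{\alpha(3\alpha-2)}}{2\alpha p_c}=1$.
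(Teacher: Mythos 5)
Your proposal is correct and follows essentially the same route as the paper: match $\overleftarrow{\mu}$ with the form \eqref{eqn:mu def general case background reversed}, read off $\mu_{\bullet}(0)=c_\alpha\alpha p_c$ and $\mu_{\circ}(i)=(1-\mu_{\bullet}(0))^{-1}\overleftarrow{\mu}(i)$, and simplify via $c_\alpha^{-1}=\alpha p_c+\tfrac12(1-\alpha)$ and the value of $p_c$. Your explicit verification of the hypotheses of \cref{prop:tree offspring RW} (normalisation of $\mu_\circ$ and criticality via the centring of $Z$) is a slightly more detailed bookkeeping of what the paper leaves implicit, and the paper's ``alternative'' argument via the coding mechanism is an optional extra you do not need.
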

\begin{proof}
It follows directly from \eqref{pcdef}, \cref{prop:tree offspring RW} and \cref{prop:C is centred RW} that $\mu_{\bullet}$ is geometric random variable (supported on the non-negative integers) with parameter
\[c_\alpha\alpha p_c=\frac{\alpha-\sqrt{\alpha(3\alpha-2)}}{1-\sqrt{\alpha(3\alpha-2)}}=\frac{2\alpha p_c}{1-\alpha+2\alpha p_c}.\]
It similarly follows that
\[ \mu_{\circ}(i) =
(1-\mu_{\bullet}(0))^{-1}\mur(i) = \frac{c_\alpha p_{i}}{2(1-c_\alpha\alpha p_c)} = \frac{p_i}{1-\alpha}.\]

Alternatively this can be seen by directly analysing the random walk coding mechanism. The random walk jump distribution $\overleftarrow{\mu}$ is given by \cref{prop:C is centred RW}. Now, recalling the construction illustrated in \cref{fig:contour coding looptree}, we have that a loop of length $i+1$ (around a white vertex with $i$ offspring) is coded by a positive jump of $\overleftarrow{Z}$ of size $i$. Therefore, it follows that
\[\mu_{\circ}(i) = \frac{\frac{1}{2}c_\alpha p_i}{\sum_{k \geq 1}\frac{1}{2}c_\alpha p_k} = \frac{p_i}{\sum_{k \geq 1}p_k}= \frac{p_i}{1-\alpha},\]
i.e.\ the probability that a positive jump of $\overleftarrow{Z}$ is of size $i$. Similarly it follows that the number of offspring of a black vertex, say corresponding to time $j$ of $\overleftarrow{Z}$ (choosing the minimal such $j$), is equal to the number of excursions of $\overleftarrow{Z}$ above $\overleftarrow{Z}_j$ before $\overleftarrow{Z}$ drops to $\overleftarrow{Z}_j-1$. The number is therefore a geometric random variable (supported on $\N_0$) with parameter
\begin{equation*}\label{eqn:fa No}
c_\alpha\alpha p_c=\frac{\alpha-\sqrt{\alpha(3\alpha-2)}}{1-\sqrt{\alpha(3\alpha-2)}}=\frac{2\alpha p_c}{1-\alpha+2\alpha p_c}.\qedhere
\end{equation*}
\end{proof}

\begin{rmk}\label{exprem}
It holds that
\[\E{\mu_{\circ}} = \frac{2\alpha p_c}{1-\alpha},\qquad \E{\mu_{\bullet}}=\frac{1-\alpha}{2\alpha p_c}.\]
\end{rmk}

By working backwards through the steps described above, it is possible to deduce that, by starting from a random walk path $Z$ as in \cref{prop:C is centred RW}, one can reconstruct the critical root cluster as follows.

\begin{defn}[Decorated-tree construction of the critical cluster]\label{def:dec tree construction of cluster}
\begin{enumerate}
\item Let $Z$ denote a random walk path with step distribution $\mu$ as in \eqref{eqn:mu def}, started at $1$ and stopped at the first time it hits the non-positive integers.
\item Construct the associated two-type extended looptree $L_Z^*$, as described in \cref{sctn:coding} and illustrated in \cref{fig:contour coding looptree extended}.
\item The root vertex of the looptree is the vertex with label $0$. The root face of the looptree is the face containing both $0$ and $1$. For every non-root face of degree $k$, insert a Boltzmann triangulation with law $\Pb_{k, q}$ where $q = \frac{1}{2} \alpha^2 (1-\alpha)$. In other words, sample such a Boltzmann map and then identify the boundary of the $k$-gon with the boundary of the face in the looptree, starting by identifying a uniform point on each and continuing in such a way as to respect the planar embedding of each. Then perform Bernoulli site percolation on the triangulation of each $k$-gon with parameter $p_c$ and with black boundary condition. Do this independently for each non-root face.
\item For the root face, we again insert a Boltzmann triangulation of the $k$-gon with law $\Pb_{k, q}$ as in step 3 (i.e.\ with $k$ being the length of the face boundary and $q = \frac{1}{2} \alpha^2 (1-\alpha)$), and perform Bernoulli site percolation with parameter $p_c$, but this time the boundary condition is a bit different. Note that the boundary length of the root face is equal to one more than the size of the final jump of $Z$. Let $Z_{\tau}$ denote the terminal value of $Z$; the boundary condition for the final insertion will be that the root vertex of the $k$-gon will be black, and moving clockwise around the boundary it will be followed by an additional $k+Z_{\tau}-2$ black vertices, then $1-Z_{\tau}$ consecutive white vertices. In order to specify the embedding precisely, we map the root vertex of the $k$-gon to the vertex labelled $0$ in the looptree and continue to match vertices in such a way as to maintain the planarity of both graphs.
\item Finally, retain only the connected component of black vertices that contains the root vertex (i.e.\ the vertex labelled 0), and denote the resulting graph $\CC_Z$.
\end{enumerate}
\end{defn}

The following proposition follows directly from the definition of $Z$ and Propositions \ref{prop:peeling probs} and \ref{prop:C is centred RW}.

\begin{prop}\label{prop:def36ok}
The structure $\CC_Z$ constructed as in \cref{def:dec tree construction of cluster} is equal in distribution to $\CC$, the critical root cluster under white--black--white boundary condition.
\end{prop}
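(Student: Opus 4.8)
The plan is to observe that the peeling exploration of \cref{sctn:peeling example} is a deterministic function of the coloured map restricted to the region it explores, while conversely $\CC$ is a deterministic function of the data that this exploration generates; it then suffices to check that this data has exactly the law prescribed by \cref{def:dec tree construction of cluster}. The relevant data consists of: the walk $Z$; the finite triangulations swallowed on the left along the way (one for each negative jump of $Z$, the last being the face swallowed at termination); and the colours of all vertices interior to those triangulations together with those of all internal vertices uncovered by the peeling. The peeling steps that leave the black boundary length unchanged --- discovery of an internal white vertex, or of a white boundary vertex to the right --- contribute nothing to $\CC$, since they either extend the interface trivially or swallow a component lying on the white side of the interface, so they may be ignored when passing from $(B_i)$ to $Z$. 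The reconstruction of $\CC$ from the remaining data is precisely the recipe recalled in \cref{sctn:cluster boundary looptree}: the structure hanging off the boundary at termination is the extended looptree $L_Z^*$, each of its faces is filled with the corresponding swallowed triangulation carrying the corresponding colouring, and one retains the black connected component of $\rho$.

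It remains to identify the joint law of the data, and here everything is already in place. For the walk, \cref{prop:C is centred RW} shows that $Z$ has step distribution $\mu$ of \eqref{eqn:mu def}, started at $1$ and stopped the first time it hits the non-positive integers, which is exactly the walk of step 1 of \cref{def:dec tree construction of cluster}. By translation invariance and the domain Markov property, conditionally on the first $i$ peeling outcomes and the portion of the map they reveal, the unexplored map $M_i$ again has law $\Ha$; iterating and applying \cref{prop:peeling probs}(ii), the finite triangulations swallowed at the successive steps are mutually independent given the outcomes, and one that is a triangulation of the $(m+1)$-gon has law $\Pb_{m+1,q}$ with $q=\tfrac{1}{2}\alpha^2(1-\alpha)$, matching the insertions in steps 3 and 4. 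The colours of the vertices interior to these triangulations and of the internal vertices found by the peeling have not been inspected by the algorithm, so conditionally they are fresh independent Bernoulli$(p_c)$ colourings, one per triangulation, matching the percolation in steps 3 and 4. Finally one reads off from the exploration the boundary colours seen by each inserted triangulation: a component swallowed on the left is bordered entirely by black vertices (the accumulated black boundary string), giving the black boundary condition of step 3; and the component swallowed at the final step is bordered by $\rho$, then the remaining black boundary string (which accounts for the $k+Z_\tau-2$ further black vertices when the root face has degree $k$), then the $1-Z_\tau$ white vertices lying beyond it, which is the boundary condition of step 4. Combining these facts yields $\CC_Z \overset{(d)}{=} \CC$.

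The step I expect to require the most care is not probabilistic but combinatorial: verifying that the planar (not merely graph-theoretic) structure of the assembly matches, i.e.\ that the cyclic order in which internal-black-vertex steps and swallowed faces occur along the peeling agrees with the order of black vertices and loops along the contour of $L_Z^*$, and that the root-edge orientations are consistent at each identification. Closely tied to this is pinning down the colouring of the root face in all cases --- in particular when early peeling steps append internal black vertices that are only swallowed in the very last step, so that the root face degree $k$ exceeds the minimal value $2-Z_\tau$ --- and checking that the prescription ``$\rho$ black, then $k+Z_\tau-2$ black, then $1-Z_\tau$ white'' of step 4 reproduces the true colouring there. Both points are settled by an induction on the peeling steps that tracks the coloured map restricted to the region explored so far, using \cref{prop:peeling probs} at each step.
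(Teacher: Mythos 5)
Your proposal is correct and takes essentially the same route as the paper, whose proof is the one-line observation that the claim follows directly from the definition of $Z$ together with \cref{prop:peeling probs} and \cref{prop:C is centred RW}; you are simply supplying the details (the law of $Z$, the Boltzmann law and conditional independence of the swallowed triangulations via the domain Markov property and translation invariance, the fresh Bernoulli colourings of their interiors, and the boundary-condition bookkeeping for the root face) that the paper leaves implicit. The only slip is your remark that the colours of the internal vertices found by the peeling are not inspected — they are, since they drive the increments of $(B_i)_{i\geq 0}$ — but this is harmless: those colours are already encoded in $Z$ itself (a $+1$ jump means a black internal vertex, and the white ones never enter the cluster), so the genuinely fresh randomness consists only of the interiors of the swallowed triangulations, exactly as \cref{def:dec tree construction of cluster} prescribes.
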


\section{Scaling limit of decorated critical trees}\label{sctn:dec tree limits}

In this section we will show that the scaling limit for a general class of decorated two-type Galton-Watson trees is the CRT (see Proposition \ref{prop:GHP convergence of enriched trees} below). This is a key step to proving Theorem \ref{thm:main scaling lim}. In particular, in \cref{sctn:main result proof} we will show that the critical cluster is very close to a decorated tree, and therefore has the same scaling limit.

As the underlying tree, in the notation of \cref{sctn:GW background}, we let $T_n$ be a two-type Galton-Watson tree conditioned to have at least $n$ vertices, with offspring distributions $\mu_{\bullet}$ and $\mu_{\circ}$ satisfying the following four conditions.

\begin{assn}\label{assn:offspring}
\begin{enumerate}[(1)]
\item The measure $\mu_{\circ}$ is supported on $\mathbb{N}$, i.e.\ $\mu_{\circ}(0)=0$.
\item The measure $\mu_{\bullet}$ is geometric, supported on the non-negative integers.
\item It holds that $\E{\mu_{\bullet}}\E{\mu_{\circ}}=1$.
\item If $X$ is a random variable with law $\mu_{\circ}$ or $\mu_{\bullet}$, there exists a $\lambda>0$ such that $\E{e^{\lambda X}} < \infty$.
\end{enumerate}
\end{assn}

Under the first three assumptions, note that the reversed walk excursion $\Zr$ that codes $T_n$ using the bijection established in \cref{prop:tree offspring RW} is a centred random walk excursion with increment distribution $\mur$ as in \eqref{eqn:mu def general case background reversed}. The time reversal of $\Zr$ is a random walk with increment distribution $\mu$, where
\begin{equation}\label{eqn:mu def general case}
\mu(i) = \begin{cases}
\mu_{\bullet}(0), & \text{ if } i=1, \\
(1-\mu_{\bullet}(0))\mu_{\circ}(-i), & \text{ if } i\leq -1, \\
0, & \text{ otherwise}.
\end{cases}
\end{equation}
Moreover, the conditioning on $T_n$ means that the excursion of $\Zr$ is conditioned to have lifetime at least $n$.

Letting $d_n$ and $\nu_n$ respectively denote the graph metric and counting measure on the vertices of $T_n$, and $\rho_n$ its root, it is shown in \autocite[Theorem 2]{miermont2008invariance} that there exists $\sigma>0$ such that
\begin{equation}\label{eqn:miermont lim}
\left(T_n, \frac{\sigma}{2\sqrt{n}}d_n, \frac{1}{n} \nu_n, \rho_n \right) \to (\T, d_\T, \nu_\T, \rho_\T)
\end{equation}
as $n \to \infty$. The constant $\sigma$ is defined in \autocite[Equation (2)]{miermont2008invariance} and can be calculated explicitly when $\mu_{\bullet}$ and $\mu_{\circ}$ are as above, using \cref{prop:peeling probs} and generating functions for Catalan numbers. The quantity $\sigma^2$ is analogous to the variance in the one-type case. However, the precise value of $\sigma$ and its derivation are not particularly enlightening, so we omit the computation and simply refer to $\sigma$ throughout the proofs.

We will now construct $\Dec(T_n)$, a decorated version of $T_n$ as described in \cref{sctn:dec trees} and illustrated in \cref{fig:decorated looptree}. In particular, as in \cref{sctn:dec trees}, we let $((G_i, d_i, m_i, \ell_i))_{i \geq 1}$ be a sequence of graphs parametrised by their boundary length, with respective laws $(\Pb_i)_{i \geq 1}$. To avoid trivialities, we suppose that $m_i(G_i)>0$, $\Pb_i$-a.s., for each $i$. For the rest of \cref{sctn:dec tree limits}, we will make the following further assumptions on the sequence $(\Pb_i)_{i \geq 1}$. Where the relevant index $i$ is clear, we write $\Pb$, $\mathbb{E}$ and {Var} for probability, expectation and variance under $\Pb_i$. For any $i \geq 1$, we also let $\# G_i$ denote the number of vertices of $G_i$.

\begin{assn}\label{assn:inserted graph general assumption}
\begin{enumerate}[(1)]
\item For all $i$, we have that
\[\sup_{u, v \in \partial G_i} d_{i}(u,v) \leq i,\qquad \mathbb{P}_i\mbox{-a.s.},\]
where $\partial G_i$ denotes the boundary of $G_i$. Moreover if $\diam(G_i,d_i):=\sup_{u, v \in G_i} d_{i}(u,v)$, then
\[\limsup_{i\rightarrow\infty}\frac{\E{(\diam ((G_i, d_i)))^3}}{i^3}<\infty.\]
\item The measures $(m_i)_{i\geq 1}$ satisfy
\begin{equation}\label{eqn:inserted graphs mean variance}
\limsup_{i\rightarrow\infty}\frac{\Var{m_i (G_i)}}{i}<\infty,\qquad\limsup_{i\rightarrow\infty}\frac{\E{(m_i (G_i))^3}}{i^3}<\infty.
\end{equation}
\end{enumerate}
\end{assn}

\begin{rmk}
It would be sufficient to control a $(2+\epsilon)$-moment in place of the third moment for $i^{-1}\diam ((G_i, d_i))$ and $i^{-1}m_i (G_i)$, but in checking the assumptions in our case it is convenient to control the third moment anyway.
\end{rmk}

In what follows, we let $\td_n$ denote the metric on $\Dec(T_n)$ obtained as in \eqref{eqn:decorated metric def}. We also let $\tnu_n$ denote the measure on $\Dec(T_n)$ as in \eqref{eqn:decorated meas def}, and we let $\trho_n$ be the root inherited from $T_n$. Before we state the main result of this section, we need to introduce some relevant scaling constants.

\begin{defn}[Scaling constants]\label{def:scaling consts}
\begin{enumerate}[(a)]
\item We set
\[\beta = \left( \sum_{i\geq 1} \mu(-i) \E{m_{i+1} (G_{i+1})} \right)^{-1} = \left( (1-\mu_{\bullet}(0)) \sum_{i\geq 1} \mu_{\circ}(i)\E{m_{i+1} (G_{i+1})} \right)^{-1}.\]
We note that \cref{assn:inserted graph general assumption} implies that $\E{m_i (G_i)}\leq Ci$, and thus we can deduce from the basic assumptions on $\mu_\circ$ that the sum in the above expression is finite. Hence $\beta\in(0,\infty)$.
\item For $i \geq 1$, let $\eta_i$ denote the (random) distance between two uniformly chosen vertices on the boundary of $G_i$. Let
\[\chi=\E{\mu_{\bullet}}\sum_{i \geq 1}i\mu_{\circ}(i)\E{\eta_{i+1}}.\]
Note that $\eta_i \leq i$ almost-surely under \cref{assn:inserted graph general assumption}. Hence we can deduce from the basic assumptions on $\mu_\bullet$ and $\mu_\circ$ that $\chi\in (0,\infty)$.
\item The constant $\sigma$ is that appearing in \eqref{eqn:miermont lim}.
\end{enumerate}
\end{defn}

A key input to \cref{thm:main scaling lim} and the main result of this section is the following.

\begin{prop}\label{prop:GHP convergence of enriched trees}
Let $(\Dec (T_n), \td_n, \tnu_n,\trho_n)$ be as described above. As $n\rightarrow\infty$,
\[\left(\Dec (T_n), \frac{\sigma}{\chi\sqrt{n}}\td_n, \frac{\beta}{n} \tnu_n, \trho_n \right) \to (\T, d_\T, \nu_\T, \rho_\T)\]
in distribution with respect to the pointed GHP topology, where the limiting space $(\T, d_\T, \nu_\T, \rho_\T)$ is the Brownian continuum random tree, conditioned to have mass at least $1$.
\end{prop}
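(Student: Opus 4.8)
The plan is to deduce the scaling limit of the decorated tree from the scaling limit \eqref{eqn:miermont lim} of the bare tree $T_n$ by showing that, after the appropriate rescaling, the metric on $\Dec(T_n)$ is uniformly close to a constant multiple of the metric on $T_n$, and that the pushed-forward measure is close to a constant multiple of the counting measure. Concretely, I would build a correspondence between $\Dec(T_n)$ and $T_n$ by matching each $x\in G^{(u)}$ (for $u\in (T_n)_\circ$) to the vertex $u$ itself (with black vertices of $T_n$ matched to the corresponding boundary identification point), and then control its distortion. The GHP convergence then follows from: (i) Gromov–Hausdorff closeness via this correspondence, and (ii) Prohorov closeness of the measures on the canonical embedding associated to the correspondence, exactly as outlined in \cref{sctn:GHP topology}.

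\textbf{Key steps.} First, I would analyse the metric. By the definition \eqref{eqn:decorated metric def}, for two vertices lying in blocks $G^{(u)}$ and $G^{(v)}$, the distance $\td_n(x,y)$ is a sum of ``crossing distances'' $d_{G^{(v_i)}}(b(v_{i-1},v_i),b(v_i,v_{i+1}))$ along the tree path from $u$ to $v$, plus two boundary terms at the ends. Each interior crossing term is the distance between two (roughly uniform) boundary points of an independent block, i.e.\ a copy of $\eta_{k+1}$ where $k$ is the number of offspring of the relevant white vertex; the end terms and the discrepancy between ``number of blocks on the path'' and ``graph distance in $T_n$'' are $O(\diam)$ and negligible after rescaling by $\sqrt n$, using the third-moment control on $\diam(G_i)$ from \cref{assn:inserted graph general assumption}(1) together with the fact (from \eqref{eqn:progeny asymp}) that $|T_n|$ is of order $n$ and that, on the CRT scale, the number of white vertices along a path of length $\ell$ in $T_n$ concentrates. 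The heart of the matter is a law-of-large-numbers statement along the branches of $T_n$: summing the i.i.d.-like contributions $\eta_{k_u+1}$ over the white vertices $u$ on a tree path from the root to a vertex at height $h$ gives, by \eqref{eqn:abs cont Kesten} (the Kesten-tree absolute continuity) applied to control the relevant expectations and variances, a quantity concentrated around $\chi\sigma^{-1}\cdot\frac{\sigma}{2}h$, i.e.\ around $\chi/2$ times the natural CRT-rescaled height; here the constant $\chi$ from \cref{def:scaling consts}(b) enters precisely as $\E{\mu_\bullet}\sum_i i\mu_\circ(i)\E{\eta_{i+1}}$, the expected block-crossing distance accumulated per unit of tree height (each white vertex contributes its own block, each black vertex at even generation has $\E{\mu_\bullet}$-many... — more carefully, one counts that a path of height $2h$ passes through $h$ white vertices whose sizes are size-biased, yielding the combination defining $\chi$). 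Uniformity of this concentration over all pairs of vertices — which is what is needed to bound the distortion of the correspondence — would be obtained by a union bound over the at-most-$|T_n|^2$ pairs, using the exponential moment assumption \cref{assn:offspring}(4) (hence exponential tails on the $\eta$'s via $\eta_i\le i$ and on block diameters) to beat the polynomial number of pairs; one can reduce to controlling contributions along the spine of Kesten's tree and transfer back via \eqref{eqn:abs cont Kesten} and \eqref{eqn:progeny asymp}.

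\textbf{The measure and the final assembly.} For the measure, $\tnu_n$ assigns to (the class of) each vertex its block-mass, and the total mass of $\Dec(T_n)$ is $\sum_{u\in(T_n)_\circ} m_{k_u+1}(G^{(u)})$ up to boundary-overlap corrections of lower order. Again using \eqref{eqn:abs cont Kesten} to compute the mean and the variance bound from \cref{assn:inserted graph general assumption}(2), this total mass concentrates around $\beta^{-1}|T_n|$, so $\frac{\beta}{n}\tnu_n$ has total mass converging to $\frac1n|T_n|$, which converges to $\nu_\T(\T)$ by \eqref{eqn:miermont lim}; moreover the mass is spread across the tree in the same way as the counting measure up to a vanishing Prohorov error, because no single block carries macroscopic mass (third-moment control again). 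Combining (i) the distortion bound, which gives $\dGH{(\Dec(T_n),\frac{\sigma}{\chi\sqrt n}\td_n,\cdot,\trho_n),(T_n,\frac{\sigma}{2\sqrt n}d_n,\cdot,\rho_n)}\to 0$ in probability, with (ii) the Prohorov bound on the canonical embedding, and then (iii) the known convergence \eqref{eqn:miermont lim} of $(T_n,\frac{\sigma}{2\sqrt n}d_n,\frac1n\nu_n,\rho_n)$ to the CRT conditioned to have mass at least $1$, a triangle inequality in the pointed GHP metric yields the claim. The main obstacle is step (i): establishing the \emph{uniform}-over-all-vertex-pairs concentration of the accumulated block-crossing distance around $\frac{\chi}{2}$ times the rescaled height, since the naive pointwise LLN must be upgraded to a maximal inequality over $O(n^2)$ correlated pairs (correlated because paths share initial segments) — this is where the exponential-moment assumption and a careful martingale/spine argument via Kesten's tree are genuinely needed, and it is also where the precise identification of the constant $\chi$ is pinned down.
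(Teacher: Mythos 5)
Your treatment of the metric is essentially the paper's own argument: the paper also reduces to comparing the decorated distance with $\tfrac{\chi}{2}$ times the tree distance (its correspondence $\mathcal{R}^{***}_n$ between $T_n$ and the ``black skeleton'' $T_n^*$), proves the key uniform statement \eqref{eqn:dstar good} by splitting at most recent common ancestors, transferring to the spine of Kesten's tree via \eqref{eqn:abs cont Kesten} with a $Cn^{1/2}$ cost from \eqref{eqn:progeny asymp}, and beating the polynomial union bound with a Chernoff estimate from the exponential moments; the inserted-block diameters are killed by a separate estimate (\cref{lem:max diam}), and your identification of $\chi$ as the mean crossing distance of a size-biased block is exactly the paper's $\E{Y_1}=\chi$. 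Whether one then builds the correspondence directly with $\T$ through the coding function (as the paper does, which forces it to also reconcile the coding-function and height-function orderings) or, as you propose, with $T_n$ and then concludes by \eqref{eqn:miermont lim} and a triangle inequality, is an organisational difference of no real consequence.

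The genuine gap is in the measure step. You assert Prohorov closeness of $\tfrac{\beta}{n}\tnu_n$ to (a constant times) the counting measure from two facts: concentration of the \emph{total} mass around $\beta^{-1}|T_n|$, and the absence of macroscopically heavy blocks. That implication is false in general: two measures on the same space can have equal total mass and no atoms and still be at Prohorov distance bounded away from zero (e.g.\ densities $2$ and $0$ on complementary halves of a segment). What is actually needed is a \emph{uniform local} law of large numbers: over every mesoscopic region (every region carrying measure of order $\epsilon$), the sum of block masses must be close to $\beta^{-1}$ times the number of coding steps, simultaneously for all such regions. This is the content of \cref{lem:intervals all good for enriched tree convergence}(ii) in the paper, which partitions the coding interval into blocks $I_k^\epsilon$, proves concentration of $\tfrac1n\nu_n^*(I_k^\epsilon)$ around $\epsilon\beta^{-1}$ uniformly in $k$, and combines this with the small-diameter estimate for the projections $\pi(I_k^\epsilon)$ in $\T$ (part (i)) to convert interval estimates into Prohorov bounds on the canonical embedding. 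Note also a second point your sketch skips: under the conditioning $|T_n|\geq n$ the block masses read along the coding walk are \emph{not} i.i.d., and the paper handles this by the discrete Vervaat-transform comparison \eqref{eqn:Vervaat dis} to an unconditioned walk before applying the law of large numbers. Your argument would need both of these ingredients (a uniform-over-regions LLN and the Vervaat/absolute-continuity device, or some substitute such as a spine-based local volume estimate) to close the Prohorov part; with them added, the assembly via the GHP triangle inequality you describe is sound.
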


\begin{rmk}
The constant scaling factors appearing in \cref{prop:GHP convergence of enriched trees} can be explained as follows. Firstly, $\frac{\sigma}{2}$ is the distance scaling factor for $T_n$, see \autocite[Theorem 2]{miermont2008invariance}. Additionally, the distance between a black vertex and its grandparent will be $2$ in $T_n$, but will on average be $\chi$ in $\Dec (T_n)$, giving an extra factor of $2\chi^{-1}$. The factor $\beta$ appears in the scaling for the measure since $\beta^{-1}$ is the (average) measure of a graph inserted at each tree vertex (if we consider that entire graphs are inserted at white vertices, and that black vertices are ``empty"). Therefore, the law of large numbers roughly entails that this should change volumes by a factor of $\beta$.
\end{rmk}

\subsection{Constructing a correspondence}

The strategy for proving \cref{prop:GHP convergence of enriched trees} is roughly as follows. Under \cref{assn:inserted graph general assumption}, the inserted graphs are small and vanish in the scaling limit. Therefore, it will be sufficient to consider the scaling limit of the boundary of $\Dec(T_n)$, endowed with the metric inherited from $\Dec(T_n)$ (we formalise this later). Such a graph almost fits into the general framework for scaling limits of enriched trees studied by Stufler \autocite[Section 6.8]{StuflerTreeSurvey}, except that the result of \autocite[Section 6.8]{StuflerTreeSurvey} is written for an analogous model for one-type trees. Below, we adapt his proof to our two-type case.

It will be useful in what follows to construct a canonical correspondence between $T_n$ and $\T$. In particular, by \cref{prop:tree offspring RW} the two-type tree $T_n$ is coded by an excursion of the random walk $\Zr$ and its time reversal $Z$ using the coding mechanism illustrated in \cref{fig:contour coding looptree}, and the conditioning in the definition of $T_n$ is equivalent to conditioning this excursion to have length at least $n$. Since we assumed that $\E{\mu_{\bullet}}\E{\mu_{\circ}}=1$ and that $\mu_{\circ}$ and $\mu_{\bullet}$ both have finite exponential moments, the random walk $Z$ is centred and its conditioned excursion converges after rescaling to a Brownian excursion conditioned to have length at least $1$. This allows us to define a correspondence between $T_n$ and $\T$ using coding functions, and we can modify this to check the correspondence $\mathcal{R}_n$ between $\Dec (T_n)$ and $\T$ that we now introduce is of low distortion for large $n$. In the following definition, we assume that all the relevant objects are defined on the same probability space. Moreover, we recall from Section \ref{sctn:CRT def} that $\pi$ is the canonical projection from $[0,\zeta]$ to $\mathcal{T}$.

\begin{defn}[The correspondence $\mathcal{R}_n$]\label{def:correspondence}
A correspondence $\mathcal{R}_n\subseteq \left( \Dec (T_n), \frac{\sigma}{\chi\sqrt{n}}\td_n, \trho_n \right) \times (\mathcal{T}, d_{\T}, \rho_{\T})$ is given as follows. Represent the vertices in $T_n$ as $u_0, \ldots, u_{|T_n|-1}$ according to their labels given by the ordering of the coding function $Z$ as illustrated in \cref{fig:contour coding looptree}. Given $i \in \N_0$ with $u_i\in t_\bullet$ and $s \in [0,\infty)$ such that $\lfloor sn \rfloor = i$, we include $(u_i,\pi(s))$ in $\mathcal{R}_n$. Moreover, if $i \in \N_0$ with $u_i\in t_\circ$ and $s \in [0,\infty)$ such that $\lfloor sn \rfloor = i$, then we include $(v,\pi(s))$ in $\mathcal{R}_n$ for all $v \in G^{(u)}$. Finally, for $sn\geq |T_n|$, include $(u_0,\pi(s))$ in $\mathcal{R}_n$.
\end{defn}

Note that, since $u_0$ is necessarily black, the roots of $\Dec (T_n)$ and $\T$ are included as a pair in the correspondence $\mathcal{R}_n$. Importantly, we have the following coupling result.

\begin{prop}\label{prop:dis to 0 dec tree}
There exists a probability space $(\Omega, \F, \Pb)$ on which $\textsf{dis}(\mathcal{R}_n)\to 0$ in probability.
\end{prop}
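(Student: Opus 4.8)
The plan is to realise all the objects on a single probability space via a Skorokhod coupling, to reduce the distortion of $\mathcal{R}_n$ to its value on the black vertices of $T_n$ (the inserted graphs being negligible), and then to compare the decorated metric $\td_n$ with the tree metric $d_n$ along geodesics. By \cref{prop:tree offspring RW}, $T_n$ is coded by the excursion of the centred walk $\Zr$ (equivalently its time reversal $Z$), conditioned to have length at least $n$; since $\mu_{\bullet}$ and $\mu_{\circ}$ have finite exponential moments, the rescaled walk converges in distribution to a multiple of a Brownian excursion conditioned to have duration at least $1$, which is exactly the invariance principle underlying \eqref{eqn:miermont lim}. By the Skorokhod representation theorem I may therefore assume that this convergence, and hence the convergence \eqref{eqn:miermont lim} of $(T_n,\tfrac{\sigma}{2\sqrt n}d_n,\tfrac1n\nu_n,\rho_n)$ to $(\T,d_\T,\nu_\T,\rho_\T)$ together with $\zeta_n:=|T_n|/n\to\zeta$, holds almost surely. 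In particular, writing $\mathcal{R}_n'$ for the naive correspondence pairing $u_i$ with $\pi(s)$ whenever $\lfloor sn\rfloor=i$ (and $u_0$ with $\pi(s)$ for $sn\ge |T_n|$), we have $\textsf{dis}(\mathcal{R}_n')\to 0$ almost surely, where the distortion is computed with the metric $\tfrac{\sigma}{2\sqrt n}d_n$ on the $T_n$ side.

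Next I would discard the decorations. As $\mu_{\bullet},\mu_{\circ}$ have exponential tails and, by \eqref{eqn:progeny asymp}, $|T_n|\le n^{2}$ with probability tending to $1$, the maximal vertex degree of $T_n$ is $O(\log n)$ with high probability; by \cref{assn:inserted graph general assumption}(1) each inserted graph $G^{(u)}$ then has $\td_n$-diameter at most $\deg u=O(\log n)$, so every point of $\Dec(T_n)$ lies within rescaled distance $\tfrac{\sigma}{\chi\sqrt n}\cdot O(\log n)=o(1)$ of some black vertex $u_i$. Hence, writing $\mathcal{R}_n^\bullet$ for the restriction of $\mathcal{R}_n$ to pairs $(u_i,\pi(s))$ with $u_i\in(T_n)_\bullet$, it suffices to show $\textsf{dis}(\mathcal{R}_n^\bullet)\to 0$ in probability. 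For black $u_i,u_j$, letting $w_1,\dots,w_h$ be the white vertices on the $T_n$-geodesic between them, \eqref{eqn:decorated metric def} gives $d_n(u_i,u_j)=2h$ and $\td_n(u_i,u_j)=\sum_{k=1}^h\delta_k$, where $\delta_k$ is the distance in $G^{(w_k)}$ between the two boundary vertices matched to the neighbours of $w_k$ on the path. Combining this with the first paragraph, it is enough to prove
\[
\sup_{u_i,u_j\in(T_n)_\bullet}\Big|\textstyle\sum_{k=1}^{h}(\delta_k-\chi)\Big|=o(\sqrt n)\qquad\text{in probability}\quad\big(h=h(u_i,u_j)\big),
\]
because then $\big|\tfrac{\sigma}{\chi\sqrt n}\td_n(u_i,u_j)-d_\T(\pi(s),\pi(t))\big|\le\tfrac{\sigma}{\sqrt n}\,o(\sqrt n)+\textsf{dis}(\mathcal{R}_n')\to 0$.

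Finally, I would prove the displayed bound. Every such geodesic splits into an ascending and a descending portion of two root-to-vertex paths, so up to an additive error $O(\max\deg)=o(\sqrt n)$ at the branch point it is enough to control $\sup_v|S_v|$, where $S_v:=\sum_{w\preceq v,\,w\in(T_n)_\circ}(\delta_w-\chi)$ runs over the white ancestors of $v$. Conditionally on $T_n$ the summands $\delta_w$ are independent; by \cref{assn:inserted graph general assumption}(1) and the exponential tails of $\mu_{\circ}$ they have uniformly bounded third moments, while the cyclic-shift invariance of the laws $(\Pb_i)$ together with the size-biasing of offspring along tree paths (cf.\ the Kesten tree and \eqref{eqn:abs cont Kesten}) identifies $\chi=\E_{\hat\mu_{\circ}}[\E{\eta_{K+1}}]$ as the correct centring constant, so that the centred partial sums of $(\delta_w-\chi)$ along any fixed path have mean $o(\sqrt n)$. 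A maximal inequality run along the contour exploration of $T_n$, in the spirit of Stufler's treatment of the one-type case \autocite[Section 6.8]{StuflerTreeSurvey}, then upgrades this to $\sup_v|S_v|=o(\sqrt n)$ in probability, which completes the argument.

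The hard part is precisely this last step: converting a one-path law of large numbers into a bound that is uniform over the exponentially many pairs of vertices, while simultaneously checking that the averaging produces the constant $\chi$ rather than some path-dependent quantity (which requires tracking both the size-biasing of degrees along geodesics and, via cyclic invariance, the positions of the boundary vertices through which a geodesic enters and leaves each inserted graph). Everything else is either a soft coupling argument or a routine moment/tail estimate; note also that only the Gromov--Hausdorff distortion is at stake here, so no measure estimate is needed at this stage.
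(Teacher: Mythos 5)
Your outline follows the same architecture as the paper's proof (couple via Skorohod so that the tree with its coding-function labelling is a low-distortion match for $\T$, show the inserted graphs are uniformly small, then compare $\td_n$ on $t_\bullet$ with $\tfrac{\chi}{2}d_n^{\text{tr}}$ via the Kesten-tree absolute continuity \eqref{eqn:abs cont Kesten}), and the centring constant $\chi$ is identified correctly. However, there are two genuine gaps. First, your reduction to black vertices rests on the claim that \cref{assn:inserted graph general assumption}(1) forces $\diam(G^{(u)})\leq \deg u=O(\log n)$. That assumption only bounds distances \emph{between boundary vertices} of $G_i$ by $i$; the full diameter (through interior vertices) is controlled only in third moment, and in the intended application (percolated Boltzmann triangulations) it is certainly not bounded by the boundary length. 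The correct statement is the one proved in \cref{lem:max diam}: $\sup_{v\in t_\circ(T_n)}\diam(G^{(v)})<n^{2/5}$ with high probability, obtained from the third-moment bound via Markov's inequality, the cyclic-shift comparison \eqref{eqn:Vervaat dis}, the tail bound \eqref{eqn:progeny asymp} on $|T_n|$, and a union bound. Your $O(\log n)$ bound is false as stated, although the weaker (and sufficient) $o(\sqrt n)$ bound is available by this route.

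Second, the step you yourself flag as ``the hard part'' is exactly the content of \cref{lem:dis Rn3}, and your sketch does not close it. After reducing to ancestor pairs and passing to Kesten's tree via \eqref{eqn:abs cont Kesten}, one pays a factor of order $n^{1/2}$ (from $\pr{|T_n|\geq n}^{-1}$) times a sum over polynomially many spine heights, so a per-pair failure probability obtained from bounded third moments and a Chebyshev/Kolmogorov-type maximal inequality decays far too slowly for the union bound to survive. The paper instead uses that the spine increments $Y_j$ (boundary distances in graphs attached to $\hat\mu_\circ$-size-biased white vertices, which are at most the boundary length and hence have exponential moments by \cref{assn:offspring}(4) and \cref{assn:inserted graph general assumption}(1)) satisfy a Chernoff bound, giving failure probability $e^{-c_\epsilon n^{1/4}}$ for the event in \eqref{eqn:dstar good} with the truncation $\epsilon d_n^{\text{tr}}\vee n^{1/3}$; this truncation is what makes the estimate effective for short paths. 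Writing ``a maximal inequality in the spirit of Stufler'' names the right toolbox but leaves the decisive quantitative step unproven. A more minor gloss: asserting $\textsf{dis}(\mathcal{R}'_n)\to0$ directly from the a.s. version of \eqref{eqn:miermont lim} skips the comparison between the height-function labelling (used by Miermont) and the coding-function labelling defining the correspondence; in \cref{lem:dis Rn1} this requires controlling the label shift by the number of white ancestors and the longest run of consecutive white vertices, which is not automatic from GHP convergence alone.
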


We will prove \cref{prop:dis to 0 dec tree} by a series of intermediate lemmas regarding three other distortions. We will now introduce these, together with the basic proof strategy. We note that a very similar argument is used to establish \autocite[Theorem 6.60]{StuflerTreeSurvey}, which is an analogous result for one-type trees. See also \autocite{CurHaasKortCRTDiss,fleurat2023phase} for similar arguments. In what follows, we let $d_n^{\text{tr}}$ denote the graph metric on the two-type tree $T_n$, and consider the rescaled space
\[\left(T_n, \frac{\sigma}{2\sqrt{n}}d_n^{\text{tr}}, \rho_n\right),\]\label{fa dtr}
where $\rho_n$ is the vertex $u_0$ in $T_n$; we define two correspondences between this space and $(\T,d_{\T},\rho_{\T})$.
\begin{itemize}
\item Firstly, we represent the vertices in $T_n$ as $u_0, \ldots, u_{|T_n|-1}$ according to their labels given by the ordering of the coding function $Z$ as illustrated in \cref{fig:contour coding looptree}. Given $i \in \N_0$ with $i<|T_n|$, and $s \in [0,\infty)$ such that $\lfloor sn \rfloor = i$, we let $(u_i,\pi(s))\in \mathcal{R}^*_n$. Moreover, for $s\geq |T_n|$, let $(u_0,\pi(s))\in \mathcal{R}^*_n$.\label{fa Rnstar}
\item Secondly, we represent the vertices in $T_n$ as $v_0, \ldots, v_{|T_n|-1}$ according to their labels given by the depth-first ordering of the \textit{height} function used in \autocite[Section 1.5]{miermont2008invariance}. (This is also defined at the end of \cref{sctn:coding}.)  Given $i \in \N_0$ with $i<|T_n|$, and $s \in [0,\infty)$ such that $\lfloor sn \rfloor =i$, we let $(v_i,\pi(s))\in \mathcal{R}^{**}_n$. Moreover, for $s\geq |T_n|$, let $(v_0,\pi(s))\in \mathcal{R}^{**}_n$.
\end{itemize}
(In the remainder of this section, we will consistently use $u_i$ to denote a vertex labelled with respect to the coding function ordering as in \cref{fig:contour coding looptree}, and $v_i$ to denote a vertex labelled with respect to the height function ordering.) We also let $(T^*_n, d^*_n, \rho_n^*)$ denote the tree with vertex set equal to $t_{\bullet}$ (i.e.\ the restriction to the boundary of $\Dec(T_n)$), and edge set consisting of edges joining black vertices to their grandparents in $T_n$. The metric $d^*_n$ on this space is the metric inherited from $\Dec (T_n)$. In other words, if $u,v$ are neighbours in $T^*_n$, then $d^*_n(u,v)$, the length of the edge between $u$ and $v$ in $T^*_n$, is equal to the distance between the corresponding vertices in $\Dec (T_n)$ (i.e.\ the distance in the graph inserted at the white vertex between $u$ and $v$). The root $\rho_n^*$ is the same as that of $T_n$. Additionally, we define a correspondence $\mathcal{R}^{***}_n$ between $(T_n, \frac{\sigma}{2\sqrt{n}}d_n^{\text{tr}}, \rho_n)$ and
\[\left(T^*_n, \frac{\sigma}{\chi\sqrt{n}}d_n^*, \rho_n^*\right)\]
as follows.

\begin{itemize}
\item If $u \in T_n$ and $u \in t_{\bullet}$, then put $(u,u) \in \mathcal{R}^{***}_n$. If instead $u \in T_n$ and $u \in t_{\circ}$, put $(u,v) \in \mathcal{R}^{***}_n$, where $v$ is the parent of $u$ in $T_n$.\label{fa Rnstarrr}
\end{itemize}
Again since $\rho_n \in t_{\bullet}$ this means that the roots are in correspondence. The first key step of the proof is to show that the vertex ordering determined by our coding function gives rise to a natural correspondence with $\T$ for the underlying two-type tree.

\begin{lem}\label{lem:dis Rn1}
There exists a probability space $(\Omega, \F, \Pb)$ on which $\textsf{Dis} (\mathcal{R}^*_n) \to 0$ almost surely.
\end{lem}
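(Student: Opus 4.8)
The plan is to couple the two-type tree $T_n$ and the CRT $\T$ on a single probability space so that the coding random walk excursion of $T_n$, suitably rescaled, converges to a Brownian excursion conditioned to have length at least $1$. The correspondence $\mathcal{R}^*_n$ pairs $u_i$ (the $i$-th vertex in the coding-function order) with $\pi(s)$ whenever $\lfloor sn\rfloor = i$, so estimating $\textsf{Dis}(\mathcal{R}^*_n)$ amounts to controlling, uniformly over $i,j$, the difference between $\frac{\sigma}{2\sqrt n} d_n^{\text{tr}}(u_i,u_j)$ and $d_\T(\pi(i/n),\pi(j/n))$. The first ingredient is therefore the invariance principle of Miermont \autocite[Theorem 2]{miermont2008invariance}, which gives precisely the convergence at \eqref{eqn:miermont lim}, but stated in terms of the \emph{height-function} ordering; so I would first record that the \emph{contour/coding-function} ordering induced by $Z$ (as in \cref{fig:contour coding looptree}) yields the same scaling limit. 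Concretely, the coding function $Z$ of $T_n$ is, by \cref{prop:tree offspring RW}, the time-reversal of a centred finite-variance random walk excursion conditioned to have length at least $n$; standard arguments (e.g. as in \autocite{le2012scaling} or \autocite[Section 2]{LeG2005randomtreesandapplications}) show that $(n^{-1/2} Z_{\lfloor sn\rfloor})_{s}$ converges in distribution to a constant multiple $c_\sigma \B$ of a Brownian excursion $\B$ conditioned to have length $\geq 1$, and that the associated tree-distance function $d_n^{\text{tr}}(u_i,u_j)$ — which in terms of $Z$ is, up to the two-type bookkeeping, $Z_i + Z_j - 2\min_{i\wedge j\le k\le i\vee j} Z_k$ divided by the appropriate constant — converges to $d_\B$ after the same rescaling. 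By Skorohod's representation theorem we may place all these objects on one space $(\Omega,\F,\Pb)$ so that the convergence is almost sure in the uniform topology on coding functions.

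The second step is to transfer this uniform convergence of coding functions into the convergence of the distortion. Here the key is a standard but slightly delicate point: convergence of the excursions in the uniform norm is \emph{not} by itself enough to conclude uniform convergence of the two-variable distance functions, because the $\min$ over a sub-interval is not a continuous functional of the path for the uniform topology at points where the path has a flat stretch at its infimum. The usual fix — and the step I expect to be the main obstacle — is to use the fact that the limiting Brownian excursion a.s. attains strict local minima (it has no interval of constancy, and more precisely for the CRT the map $(s,t)\mapsto d_\B(s,t)$ is a.s. continuous and the quotient is a compact real tree), together with tightness/equicontinuity of the discrete distance functions, to upgrade to uniform convergence of $(i,j)\mapsto \frac{\sigma}{2\sqrt n}d_n^{\text{tr}}(u_i,u_j)$ towards $(s,t)\mapsto d_\B(s,t)$ on the relevant time scale. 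This is exactly the content of the classical proof that conditioned Galton--Watson trees converge to the CRT in the Gromov--Hausdorff sense (see \autocite{LeG2005randomtreesandapplications}, and \autocite[Theorem 6.60]{StuflerTreeSurvey} for the enriched-tree variant being mimicked here); I would cite those and adapt the argument to the two-type coding function, noting that the alternation of black and white generations only changes the graph distance by bounded multiplicative constants and parity bookkeeping that wash out in the limit.

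Given these two ingredients, the conclusion is immediate: for a pair $((u_i,\pi(s)),(u_j,\pi(t)))\in\mathcal{R}^*_n$ we have $\lfloor sn\rfloor=i$, $\lfloor tn\rfloor=j$ (with the convention sending $u_0$ for $s\ge |T_n|/n$, which contributes negligibly since $|T_n|/n\to\zeta\ge 1$ a.s. and $\pi$ is uniformly continuous near the endpoint), so
\[
\left|\tfrac{\sigma}{2\sqrt n}d_n^{\text{tr}}(u_i,u_j) - d_\T(\pi(s),\pi(t))\right|
\le \sup_{s',t'}\left|\tfrac{\sigma}{2\sqrt n}d_n^{\text{tr}}(u_{\lfloor s'n\rfloor},u_{\lfloor t'n\rfloor}) - d_\B(s',t')\right| + o(1),
\]
where the $o(1)$ comes from $|s-\lfloor sn\rfloor/n|\le 1/n$ and uniform continuity of $d_\T\circ(\pi\times\pi)$, and the supremum tends to $0$ a.s. on $(\Omega,\F,\Pb)$ by the previous paragraph. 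Taking the supremum over all pairs in $\mathcal{R}^*_n$ gives $\textsf{Dis}(\mathcal{R}^*_n)\to 0$ almost surely, as claimed.
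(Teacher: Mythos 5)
There is a genuine gap, and it sits exactly where your proof leans on the phrase ``up to the two-type bookkeeping''. The coding function $Z$ is a \L ukasiewicz-type path (a random walk excursion with up-steps of size $1$ and general down-jumps), not a height or contour function, so the identity you invoke — that $d_n^{\text{tr}}(u_i,u_j)$ is, up to constants, $Z_i+Z_j-2\min_{i\wedge j\leq k\leq i\vee j}Z_k$ — is simply false at the discrete level. The bound of that form that is actually available, $\left|d_n^{\text{tr}}(v_i, v_j) - \left(H_i + H_j - 2\min_{i \leq k \leq j} H_k\right)\right| \leq 2$, holds for the height function $H$ indexed by the \emph{depth-first} order, which is the form in which Miermont's invariance principle is stated. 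To run your argument one would need a joint invariance principle saying that the tree-distance-to-root of the vertex with \emph{coding-order} label $i$ is uniformly close (at scale $\sqrt{n}$) to a constant multiple of $Z_i$; in the one-type, lexicographic setting this is the classical joint convergence of the \L ukasiewicz path and height process, but in the present two-type setting with the nonstandard vertex ordering (black vertices labelled at first visit, white at last visit, after a time reversal) it is not covered by the references you cite, and proving it is essentially the content of the lemma. The paper's proof handles precisely this: it first gets distortion control for the correspondence built on the depth-first order (via Miermont plus the displayed $\pm 2$ bound), and then proves the re-indexing estimate \eqref{eqn:height osc 0}, i.e.\ that the vertex with coding-order label $i$ and the vertex with depth-first label $i$ are within $o(\sqrt{n})$ in tree distance, uniformly in $i$ — arguing separately for black vertices (label discrepancy bounded by the number of white ancestors, hence by $\mathrm{Height}(T_n)=O(\sqrt{n})$, then an oscillation bound on $H$) and for white vertices (longest run of consecutive white vertices is at most $n^{1/4}$ with high probability, via the exponential tails and Borel--Cantelli). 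Your proposal never confronts this order mismatch.

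Secondly, the point you flag as ``the main obstacle'' — possible discontinuity of the running-infimum functional under uniform convergence — is not an obstacle at all: $f\mapsto\inf_{[a,b]}f$ is $1$-Lipschitz for the sup norm, and with a continuous limit the two-parameter infimum field converges uniformly, so no argument about strict local minima of the Brownian excursion is needed. The real difficulties are the two identified above (walk versus height process, and coding order versus depth-first order), and as written your proof does not close either of them.
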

\begin{proof}
It follows from \autocite[Theorem 2]{miermont2008invariance} (note that \cref{assn:offspring} ensures that the required conditions for this theorem are satisfied; in particular we assumed the finiteness of small order exponential moments) and the Skorohod representation theorem that there exists a probability space upon which, almost-surely, the height function of $T_n$ converges after rescaling space by $\frac{\sigma}{2\sqrt{n}}$ and time by $\frac{1}{n}$ towards the excursion coding $\T$, jointly with the convergence of $|T_n|/n$ to its lifetime $\zeta \geq 1$. Note that Miermont rather considers a forest of trees rather than a single tree and shows convergence to a forest of CRTs; however the result straightforwardly adapts to our setting by considering only the first tree in the forest with at least $n$ vertices. By mimicking the proof of \autocite[Lemma 2.4]{LeG2005randomtreesandapplications} using the fact that
\[\left|d_n^{\text{tr}}(v_i, v_j) - \left(H_i + H_j - 2\min_{i \leq k \leq j} H_k\right)\right| \leq 2,\]
where $H$ is the height function defined at the end of \cref{sctn:coding}, it follows that $\textsf{Dis} (\mathcal{R}^{**}_n)$ converges to $0$, almost-surely. (Here we use the height function ordering to label vertices.)

We can then exchange the height function ordering for the ordering of our coding function in two steps as follows. We claim that it is sufficient to show that
\begin{equation}\label{eqn:height osc 0}
\Delta_n := \sup_{i < |T_n|}\frac{\sigma}{\sqrt{n}} d_n^{\text{tr}}(u_i,v_i) \to 0
\end{equation}
almost surely. Indeed, the claim then follows from the conclusion of the previous paragraph since, by the triangle inequality, for any $i, j \leq |T_n|-1$ and $s,t \in [0,|\T|)$ such that $(u_i,\pi(s))$, $(u_j,\pi(t)) \in \mathcal{R}^{*}_n$, we can write
\[\left|\frac{\sigma}{2\sqrt{n}}d_n^{\text{tr}}(u_i,u_j) - d_{\T}(\pi(s), \pi(t))\right| \leq \left|\frac{\sigma}{2\sqrt{n}}d_n^{\text{tr}}(v_i,v_j) - d_{\T}(\pi(s), \pi(t))\right| + \Delta_n.\]

We first prove \eqref{eqn:height osc 0} when $u_i$ is a vertex in $t_{\bullet}$. If $u$ is a vertex in $t_{\bullet}$, note that then its labels according to the two orderings can differ by at most $\text{Height} (T_n)$; this is because the difference between the two labellings of a vertex $u$ will be the number of white ancestors of $u$. Moreover $\text{Height} (T_n)/\sqrt{n}$ is bounded by some (random) $C< \infty$ almost surely on our probability space. Hence, if $u_i$ in the coding ordering is equal to $v_j$ in the height function ordering, then $|i-j|\leq C\sqrt{n}$, and hence
\[\frac{\sigma}{\sqrt{n}}d_n^{\text{tr}}(u_i, v_i) \leq \frac{\sigma}{\sqrt{n}}\left(2+2\sup_{|i-j| \leq C\sqrt{n}}|H_i - H_j|\right).\]
Here, as required, the latter quantity converges to $0$ almost surely by the almost sure convergence of the rescaled height functions to a continuous limit.

It remains to prove the same statement when $u_i\in t_{\circ}$. We let $k$ be the index of the nearest black vertex to $u_i$ with respect to the coding ordering, i.e.\ $k$ is such that
\[|k-i| = \min\{|\ell - i|:\: u_{\ell} \in t_{\bullet}\}\]
(breaking ties arbitrarily). Note that $|k-i|$ is upper bounded by the longest run of consecutive white vertices in the coding order. Since a white vertex corresponds to a down step of the coding function $Z$, and therefore an up step of the random walk $\Zr$, it is possible to deduce from \eqref{eqn:progeny asymp} that there exist constants $c>0$ and $C<\infty$ such that the probability that there exists a run of more than $n^{1/4}$ consecutive white vertices is upper bounded by
\[\pr{|T_n| \geq n^4} + Cn^{9/2}e^{-c{n}^{1/4}} \leq Cn^{-3/2} + Cn^{9/2}e^{-c{n}^{1/4}}.\]
Hence, by Borel-Cantelli, the longest run is bounded by $n^{1/4}$ eventually almost surely. Furthermore, on the event $|k-i| \leq n^{1/4}$, it holds by construction that $d_n^{\text{tr}}(u_i,u_k) \leq 2n^{1/4}$. Moreover, since $u_k \in t_{\bullet}$, we have from \eqref{eqn:height osc 0} that $\frac{\sigma}{\sqrt{n}} d_n^{\text{tr}}(u_k,v_k) \to 0$ (uniformly over the choice of $i$ and $k$). Finally, note that
\[\frac{\sigma}{\sqrt{n}} d_n^{\text{tr}}(v_k,v_i) \leq \frac{\sigma}{\sqrt{n}}\left(2+2\sup_{|i-j| \leq n^{1/4}}|H_i - H_j|\right),\]
which also converges to zero (uniformly over the choice of $i$ and $k$). Combining these three observations gives \eqref{eqn:height osc 0}.
\end{proof}

\begin{rmk}
An alternative way to handle the vertices of $t_\circ$ in the previous proof is to note that $|T_n|-1$ minus the labelling of the white vertices is close to the height function obtained when the tree is explored in depth-first order, but with the reverse lexicographical ordering of children.
\end{rmk}

We next give a lemma that will allow us to handle the inserted graphs.

\begin{lem}\label{lem:max diam}
The probability space $(\Omega, \F, \Pb)$ of \cref{lem:dis Rn1} can be chosen so that the probability of the event
\[\sup_{v \in t_{\circ}(T_n)} \diam(G^{(v)}) < n^{2/5}\]
converges to 1 as $n \to \infty$.
\end{lem}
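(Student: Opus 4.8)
The plan is to pass to a high-probability event on which $T_n$ has at most $Kn$ vertices and maximum offspring number at most $(\log n)^2$, and then run a union bound over the (at most $Kn$) inserted graphs using the third-moment diameter estimate of \cref{assn:inserted graph general assumption}(1). First I would enlarge the probability space of \cref{lem:dis Rn1} so that it also carries the decorations $(G^{(v)})_{v\in t_\circ(T_n)}$, sampled independently of everything else and of one another; this leaves the almost-sure conclusions of \cref{lem:dis Rn1} intact. Fix $\epsilon>0$. Since $T_n$ is an unconditioned two-type Galton--Watson tree conditioned on $\{|T_n|\geq n\}$ and $\pr{|T|\geq x}\sim cx^{-1/2}$ by \eqref{eqn:progeny asymp}, we have $\pr{|T_n|\geq Kn}=\pr{|T|\geq Kn}/\pr{|T|\geq n}\to K^{-1/2}$ as $n\to\infty$, so we may fix $K=K(\epsilon)$ with $\limsup_{n\to\infty}\pr{|T_n|>Kn}<\epsilon$. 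Set $A_n:=\{|T_n|\leq Kn\}$ and $B_n:=\{\max_{v\in T_n}k_v\leq (\log n)^2\}$, where $k_v$ denotes the number of children of $v$.

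The next step is to bound $\pr{A_n\cap B_n^c}$. Under the unconditioned law, exploring $T$ vertex by vertex (in, say, breadth-first order) reveals a sequence $(\xi_i)_{i\geq 1}$ of offspring counts such that, conditionally on the past, $\xi_i$ is a draw from $\mu_\bullet$ or $\mu_\circ$ according to the (past-measurable) type of the vertex it is attached to; hence, by \cref{assn:offspring}(4), there is $c>0$ with $\pr{\xi_i\geq L\mid\mathcal{F}_{i-1}}\leq\max\{\mu_\circ([L,\infty)),\mu_\bullet([L,\infty))\}\leq Ce^{-cL}$ almost surely, for all $L$. On $\{|T|\leq Kn\}$ every offspring count of $T$ is among $\xi_1,\dots,\xi_{Kn}$, so
\[
\pr{A_n\cap B_n^c}\;=\;\frac{\pr{A_n\cap B_n^c\cap\{|T|\geq n\}}}{\pr{|T|\geq n}}\;\leq\;\frac{\pr{\exists\, i\leq Kn:\ \xi_i\geq (\log n)^2}}{\pr{|T|\geq n}}\;\leq\;\frac{Kn\,C e^{-c(\log n)^2}}{c' n^{-1/2}}\;\xrightarrow[n\to\infty]{}\;0,
\]
using $\pr{|T|\geq n}\geq c'n^{-1/2}$ for $n$ large.

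Finally, on $A_n\cap B_n$ there are at most $Kn$ vertices in $t_\circ(T_n)$, each with $\deg v=k_v+1\leq(\log n)^2+1$. By \cref{assn:inserted graph general assumption}(1) there is $C<\infty$ with $\E{(\diam(G_i,d_i))^3}\leq C i^3$ for all $i\geq 1$, so, since the decorations are independent of $T_n$, a union bound and Markov's inequality conditional on $T_n$ give
\[
\pr{\sup_{v\in t_\circ(T_n)}\diam(G^{(v)})\geq n^{2/5},\ A_n\cap B_n}\;\leq\;\E{\mathbbm{1}_{A_n\cap B_n}\sum_{v\in t_\circ(T_n)}\frac{C\,(\deg v)^3}{n^{6/5}}}\;\leq\;\frac{C K n\,((\log n)^2+1)^3}{n^{6/5}}\;\xrightarrow[n\to\infty]{}\;0.
\]
Combining the three estimates gives $\limsup_{n\to\infty}\pr{\sup_{v\in t_\circ(T_n)}\diam(G^{(v)})\geq n^{2/5}}\leq\epsilon$, and letting $\epsilon\downarrow 0$ proves the lemma.

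The only real obstacle is that the conditioning $\{|T_n|\geq n\}$, together with the heavy $x^{-1/2}$ tail of the progeny law, forces $\E{|T_n|}=\infty$, so $\sup_v\diam(G^{(v)})$ cannot be controlled by a first-moment bound summed over all of $T_n$; moreover, applying Markov's inequality to $\sum_v(\deg v)^3$ after truncating only the size would still fail, since the factor $n^{1/2}$ lost in de-conditioning swamps the gain. This is why one truncates twice, to the event $A_n\cap B_n$: there both the number of inserted graphs and their degrees are bounded by explicit deterministic quantities (linear, resp.\ poly-logarithmic, in $n$), so the remaining union bound costs only a genuine negative power of $n$. Checking that $B_n$ holds with high probability on $A_n$ is the one place where the exponential moment hypothesis \cref{assn:offspring}(4) enters.
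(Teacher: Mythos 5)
Your proof is correct, and its skeleton (union bound over the inserted graphs plus a third-moment Markov bound from \cref{assn:inserted graph general assumption}(1)) matches the paper's; where you genuinely diverge is in how the conditioning on $\{|T_n|\geq n\}$ is neutralised. The paper bounds the probability by the expected number of bad insertions conditionally on $|T_n|=m$, rewrites this in terms of a uniformly chosen vertex, and controls its degree via the discrete Vervaat-type estimate \eqref{eqn:Vervaat dis}, comparing with an unconditioned random walk increment; together with the exponential tails of $\mu_\circ$ and a size cutoff at $n^{11/10}$ this gives an explicit polynomial rate $O(n^{-1/20})$. You avoid \eqref{eqn:Vervaat dis} entirely by a double truncation: $|T_n|\leq Kn$, using the exact tail asymptotics \eqref{eqn:progeny asymp} (at the cost of an $\epsilon$-loss removed only at the end), and $\max_v k_v\leq (\log n)^2$, paid for by a crude de-conditioning factor of order $n^{1/2}$ which is beaten by the super-polynomial tail $e^{-c(\log n)^2}$ coming from \cref{assn:offspring}(4); on the intersection the remaining union bound is deterministic, with at most $Kn$ graphs of boundary length at most $(\log n)^2+1$. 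What each buys: yours is more elementary and self-contained (no cyclic-shift comparison and no uniform-vertex size-biasing needed), whereas the paper's is shorter given that \eqref{eqn:Vervaat dis} is already set up and reused elsewhere, and it yields a quantitative rate rather than mere convergence of the probability to $1$. One small shared implicit step worth stating in either version: \cref{assn:inserted graph general assumption}(1) controls $\E{\diam(G_i,d_i)^3}/i^3$ only as $i\to\infty$, so one should also note that the third moments are finite for each fixed $i$ (as they are in the application via \cref{prop:verify assn}) before applying Markov's inequality uniformly in $i$.
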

\begin{proof} Conditionally on $(T_n)_{n\geq 1}$, we sample the additional graphs $G^{(v)}$ for $v\in t_\circ(T_n)$ independently for each $v$ and each $n$. For convenience, we also define $G^{(v)}$ to be empty when $v \in t_{\bullet}(T_n)$, so that $G^{(v)}$ is defined for all $v\in T_n$ (and all $n$). Moreover, let $U_n$ be a uniform vertex of $T_n$. It then holds that
\begin{eqnarray*}
\lefteqn{\prcond{\sup_{v \in t_{\circ}(T_n)} \diam(G^{(v)}) \geq  n^{2/5}}{|T_n|=m}{}}\\
&\leq & \econd{\sum_{v \in T_n}\mathbf{1}_{\{\diam(G^{(v)}) \geq  n^{2/5}\}}}{|T_n|=m}{}\\
&=&m\prcond{\diam(G^{(U_n)}) \geq  n^{2/5}}{|T_n|=m}{}\\
&\leq & Cm\pr{\diam(G_{1+\Zr_1-\Zr_0}) \geq  n^{2/5}}\\
&=&Cm(1-\mu_\bullet(0))\sum_{i\geq 1}\mu_\circ(i)\pr{\diam(G_{i+1}) \geq  n^{2/5}}\\
&\leq & Cmn^{-6/5},
\end{eqnarray*}
where the second inequality follows from \eqref{eqn:Vervaat dis} (and we write $G_{1+\Zr_1-\Zr_0}$ to mean a graph that is, conditionally on the first increment of $\Zr$ being positive, sampled according to the distribution for inserted graphs of boundary length $1+\Zr_1-\Zr_0$, and which is empty if the first increment of $\Zr$ is negative), and the final inequality follows from \cref{assn:offspring}(4) and \cref{assn:inserted graph general assumption}(1) together with Markov's inequality. Hence it follows from a union bound and \eqref{eqn:progeny asymp} that
\begin{align*}
\lefteqn{\pr{\sup_{v \in t_{\circ}(T_n)} \diam(G^{(v)}) \geq  n^{2/5}}}\\
& \leq \pr{|T_n| \geq n^{11/10} } + \sum_{m=n}^{n^{11/10}}\prcond{\diam(G^{(U_n)}) \geq n^{2/5}}{|T_n|=m}{}\pr{|T_n|=m}\\
& \leq C\left(n^{-1/20} + n^{11/10} n^{-6/5}\right),
\end{align*}
which converges to 0, as desired.
\end{proof}

We now turn to the task of comparing the tree metric on $t_\bullet$ with that coming from the decorated tree metric.

\begin{lem}\label{lem:dis Rn3}
The probability space $(\Omega, \F, \Pb)$ of \cref{lem:dis Rn1} can be chosen so that it also holds that $\textsf{Dis} (\mathcal{R}^{***}_n) \to 0$ in probability.
\end{lem}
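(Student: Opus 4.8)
The plan is to estimate the distortion of $\mathcal{R}^{***}_n$ directly. By definition of the correspondence, the only pairs $(u,u'), (w,w') \in \mathcal{R}^{***}_n$ for which $\frac{\sigma}{2\sqrt{n}}d_n^{\text{tr}}(u,w)$ and $\frac{\sigma}{\chi\sqrt{n}}d_n^*(u',w')$ can differ involve replacing white vertices by their black parents, so it is enough to bound, for all black $u,w \in t_\bullet$,
\[
\left| \frac{\sigma}{2\sqrt{n}}d_n^{\text{tr}}(u,w) - \frac{\sigma}{\chi\sqrt{n}}d_n^*(u,w) \right|
\]
plus a $O(1/\sqrt n)$ error accounting for the one-step adjustment between a white vertex and its black parent (this latter error is negligible once we know diameters of inserted graphs are $o(\sqrt n)$, which is Lemma \ref{lem:max diam}). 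Now the geodesic in $T_n$ between two black vertices $u,w$ passes through an alternating sequence of black and white vertices; writing the black vertices along it as $u = z_0, z_1, \ldots, z_L = w$ (so that each $z_{j}$ is the grandparent or grandchild, via a single intervening white vertex, of $z_{j+1}$), we have $d_n^{\text{tr}}(u,w) = 2L$ exactly, while $d_n^*(u,w) = \sum_{j=0}^{L-1} \eta^{(j)}$, where $\eta^{(j)}$ is the distance, inside the graph inserted at the white vertex between $z_j$ and $z_{j+1}$, between the two boundary vertices identified with $z_j$ and $z_{j+1}$. Conditionally on the tree, these $\eta^{(j)}$ are independent with $\eta^{(j)} \overset{(d)}{=} \eta_{k+1}$ where $k$ is the number of children of the relevant white vertex, and by \cref{def:scaling consts}(b) each has mean contributing to $\chi$; more precisely, averaging over a uniformly chosen white vertex on a long geodesic and using the absolute-continuity relation \eqref{eqn:abs cont Kesten} (as in the proof of Lemma \ref{lem:max diam}, replacing expectations under the conditioned tree by expectations along the Kesten-tree spine via \eqref{eqn:Vervaat dis}), the law of large numbers should give $\sum_{j=0}^{L-1}\eta^{(j)} = \frac{\chi}{2}\, d_n^{\text{tr}}(u,w)(1+o(1))$ uniformly over $u,w$ once $d_n^{\text{tr}}(u,w)$ is of order $\sqrt n$.

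Concretely, I would proceed as follows. First, fix $\epsilon>0$ and restrict to pairs with $d_n^{\text{tr}}(u,w) \geq \epsilon \sqrt n$; for pairs with $d_n^{\text{tr}}(u,w) < \epsilon\sqrt n$ the triangle inequality plus $d_n^*(u,w) \leq \sum_j \eta^{(j)} \leq \sum_j k^{(j)}$ (number of children, since $\eta_i\leq i$) and a crude bound on the total number of children along a short geodesic handles things, contributing at most $C\epsilon$ to the distortion after rescaling — here one also uses that on the high-probability event of Lemma \ref{lem:max diam} no single inserted graph has diameter exceeding $n^{2/5}$. Second, for the long pairs, I would prove the uniform law of large numbers: let $S_v = \eta^{(v)} - \chi/\E{\mu_\bullet}\cdot(\text{something})$... rather, center each $\eta^{(j)}$ by its conditional mean and control the sum of the centered terms uniformly over all $O(|T_n|)$ possible geodesic segments. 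Since there are at most $|T_n|^2 \leq n^{11/5}$ (say, on the event $|T_n|\leq n^{11/10}$) pairs $(u,w)$, a union bound combined with an exponential moment bound — available since $\eta_i \leq i$ is bounded by the (light-tailed, by \cref{assn:offspring}(4)) offspring variable, giving Bernstein/Azuma-type concentration for $\sum_{j}(\eta^{(j)} - \mathbb{E}[\eta^{(j)}\mid T_n])$ at scale $\sqrt n$ — shows the maximal deviation is $o(\sqrt n)$ with probability tending to $1$. Third, I would identify the limiting constant: the conditional mean $\E{\eta^{(j)} \mid T_n}$ summed along a geodesic of tree-length $2L$ is, by \eqref{eqn:abs cont Kesten} and the definition of $\chi$ in \cref{def:scaling consts}(b), asymptotically $\frac{\chi}{2}\cdot 2L\cdot(1+o(1))$ — the factor arising because $\E{\mu_\bullet}\sum_i i\mu_\circ(i)\E{\eta_{i+1}} = \chi$ is precisely the expected inserted-edge length per unit tree-distance, seen by a size-biasing computation matching the one in \cref{prop:def36ok}'s neighbourhood. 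Combining, $\frac{\sigma}{\chi\sqrt n}d_n^*(u,w) = \frac{\sigma}{2\sqrt n}d_n^{\text{tr}}(u,w) + o(1)$ uniformly, which is exactly $\textsf{Dis}(\mathcal{R}^{***}_n)\to 0$ in probability.

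The main obstacle I anticipate is making the law-of-large-numbers argument genuinely \emph{uniform} over all pairs of black vertices — i.e.\ controlling $\sup_{u,w}$ of the deviation rather than the deviation for a fixed pair. The clean way is the union bound sketched above, but it requires (i) a good enough tail bound on $|T_n|$ (available from \eqref{eqn:progeny asymp}, giving $|T_n| \leq n^{1+\delta}$ whp) to keep the number of pairs polynomial, and (ii) genuine exponential concentration for sums of the conditionally-independent bounded variables $\eta^{(j)}$ along each geodesic, which needs $\mathbb{P}(\eta_{i+1} > t)$ to decay fast enough — this follows from $\eta_{i+1} \le i+1$ together with the exponential tail on $\mu_\circ$ from \cref{assn:offspring}(4), since a white vertex with many children is exponentially rare. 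A secondary technical point is that geodesics in $T_n$ between black vertices are not simply "spine segments," so the absolute-continuity relation \eqref{eqn:abs cont Kesten} must be applied with some care — but since the identity \eqref{eqn:abs cont Kesten} is stated for functions depending only on the subtree above a vertex, and we only need expectations of per-edge quantities, summing it over generations (as in the proofs of Lemmas \ref{lem:dis Rn1} and \ref{lem:max diam}) suffices. Everything else is bookkeeping with the rescaling constants, which I would not belabour.
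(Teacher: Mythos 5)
Your overall strategy is the right one (reduce to pairs of black vertices, treat $d_n^*$ along a geodesic as a sum of crossing lengths with average $\chi$ per white vertex, with the size-biased law $\hat\mu_\circ$ producing the constant), but the way you propose to make it uniform has a genuine gap. Your scheme splits the argument into (i) concentration of $\sum_j(\eta^{(j)}-\E{\eta^{(j)}\mid T_n})$ conditionally on the tree, via a union bound over the polynomially many pairs, and (ii) identification of the conditional mean $\sum_j\E{\eta^{(j)}\mid T_n}\approx \chi L$. Step (i) is fine, but it only controls the randomness of the inserted graphs; all the difficulty sits in step (ii), and your argument for it does not go through as sketched. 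First, conditionally on the tree the crossing length is \emph{not} distributed as $\eta_{k+1}$: the geodesic exits the inserted graph through a \emph{specific} child-label $j$ determined by the tree, and cyclic invariance only makes the law of $d_{k+1}(g_0,g_j)$ depend on the offset $j$, not make it equal to the two-uniform-points law (think of a long cycle, where $d(g_0,g_j)=\min(j,k+1-j)$). So to get $\chi L$ you need the empirical joint statistics of (offspring count of the white vertex, exit-child index) along \emph{every} long geodesic of the conditioned tree to be close to (size-biased offspring, uniform child) — and that is essentially the whole content of the lemma, not a side computation. Second, the tools you cite do not deliver this uniformly: \eqref{eqn:Vervaat dis} compares events defined over contiguous time-intervals of the coding excursion (which is why it works in \cref{lem:max diam}, a sup over all vertices), whereas quantities along ancestral lines are record-type functionals of the walk and are not of that form; and \eqref{eqn:abs cont Kesten} by itself is an identity for expectations of one generation of the unconditioned tree, so "summing it over generations" has to be organised into an actual bound on the probability of a bad geodesic in $T_n$, which your write-up leaves unspecified.

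For comparison, the paper's proof resolves exactly this point in one stroke: after splitting at the most recent common ancestor (and absorbing a possible white apex via \cref{lem:max diam}), it bounds the probability that \emph{some} ancestor pair violates \eqref{eqn:dstar good} by transferring the whole event to Kesten's tree through \eqref{eqn:abs cont Kesten}, paying a factor $Cn^{1/2}$ coming from $\pr{|T_n|\geq n}\geq cn^{-1/2}$ (and truncating $|T_n|\leq n^2$). On the spine the crossing lengths are genuinely i.i.d.\ with law $d_{Y+1}(U_1,U_2)$, $Y\sim\hat\mu_\circ$ (the special child being uniform is what legitimises the two-uniform-points law), with mean $\chi$ and exponential moments, so a Chernoff bound gives $e^{-c_\epsilon n^{1/4}}$ per spine pair, which beats the polynomial number of pairs times $n^{1/2}$. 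If you carry out your "apply \eqref{eqn:abs cont Kesten} with care" step honestly, you will find yourself reproducing precisely this transfer, at which point the conditional-Bernstein layer of your argument becomes redundant: the concentration and the identification of the constant are handled simultaneously on the spine. So the proposal needs this Kesten-tree (or an equivalent uniform size-biasing) step supplied before it constitutes a proof.
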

\begin{proof}
In order to show this, it is sufficient to consider only pairs of vertices of the form $(v,v)$ where $v \in t_{\bullet}$. Indeed, in the case where $u \in T_n$ is in $t_{\circ}$, it is in correspondence with a vertex $u'$ in $T_n^*$, where $u'\in t_\bullet$ is a neighbour of $u$ in $T_n$. For such pairs, $\frac{\sigma}{2\sqrt{n}}d_n^{\text{tr}}(u,u')= \frac{\sigma}{2\sqrt{n}}$, which clearly converges to $0$ as $n \to \infty$. Hence, for any $u, v \in T_n$ (using the notation $v'$ for the vertex in correspondence with $v$) we can write
\begin{align*}
\lefteqn{\left|\frac{\sigma}{2\sqrt{n}}d_n^{\text{tr}}(u,v) - \frac{\sigma}{\chi\sqrt{n}}d_n^*(u',v')\right|}\\
 &\leq \left|\frac{\sigma}{2\sqrt{n}}d_n^{\text{tr}}(u',v') - \frac{\sigma}{\chi\sqrt{n}}d_n^*(u',v')\right| + \frac{\sigma}{2\sqrt{n}}d_n^{\text{tr}}(u,u') + \frac{\sigma}{2\sqrt{n}}d_n^{\text{tr}}(v,v') \\
&= \left|\frac{\sigma}{2\sqrt{n}}d_n^{\text{tr}}(u',v') - \frac{\sigma}{\chi\sqrt{n}}d_n^*(u',v')\right| + \frac{\sigma}{\sqrt{n}}.
\end{align*}
Since $u'$ and $v'$ are necessarily in $t_{\bullet}$, this reduces to the claimed special case of $v \in t_{\bullet}$, or equivalently $v \in T_n^*$.

The claim now follows by applying a similar proof to that given by Stufler of the same result for one-type trees \autocite[Theorem 6.60]{StuflerTreeSurvey}, and appealing to the absolute continuity relation of \eqref{eqn:abs cont Kesten} at the relevant step of the proof. In particular, we claim that for arbitrary $\varepsilon>0$, the following event holds with probability converging to 1.
\begin{equation}\label{eqn:dstar good}
\text{For every pair } u, v \in t_{\bullet}, \text{ it holds that } \left|d_n^*(u,v) - \frac{\chi}{2}d_n^{\text{tr}}(u,v)\right| \leq \epsilon d_n^{\text{tr}}(u,v) \vee n^{1/3}.
\end{equation}
On the event in question, and applying the observation of the previous paragraph, we have that
\[\textsf{Dis} (\mathcal{R}^{***}_n) \leq \frac{\sigma}{\chi\sqrt{n}}\left(\epsilon\diam (T_n,d_n^\text{tr})+n^{1/3}+\chi\right).\]
Since $n^{-1/2}\diam (T_n,d_n^\text{tr})$ converges almost-surely on our probability space to a finite random variable and $\varepsilon$ is arbitrary, the result will indeed follow if we can check \eqref{eqn:dstar good} holds with high probability for each fixed $\varepsilon$.

In fact, by splitting paths between $u,v\in t_\bullet$ at the most recent common ancestor and applying \cref{lem:max diam} to deal with the contribution from the relevant inserted graph if the most recent common ancestor is white, it is sufficient to prove instead that
\begin{equation}\label{eqn:prob to 0 final dis}
\pr{\exists u,v \in t_{\bullet}(T_n): u \preceq v \text{ and the bound at \eqref{eqn:dstar good} does not hold}} \to 0,
\end{equation}
where we write $u\preceq v$ to mean that $u$ is an ancestor of $v$.

Here we will use \eqref{eqn:abs cont Kesten}, which implies that, for an unconditioned critical two-type Galton--Watson tree with root $\rho$,
\begin{align*}
&\E{\sum_{h \geq 0}\sum_{\substack{v \in T: \\ d(\rho, v)=2h}} \prcond{\exists u \in t_{\bullet}(T): u \preceq v \text{ and the bound at \eqref{eqn:dstar good} does not hold}}{T}{}}\\ &= \sum_{h \geq 0} \prstart{\exists u \in t_{\bullet}(T_\infty): u \preceq v_{2h} \text{ and the bound at \eqref{eqn:dstar good} does not hold}}{\infty},
\end{align*}
where, as is defined in \cref{kessec}, $T_{\infty}$ is Kesten's tree, with its law $\Pb_{\infty}$ extended to include the inserted graphs, and $v_h$ is the backbone vertex at distance $h$ from the root in Kesten's tree. Since $T_n$ is conditioned on having at least $n$ vertices which happens with at least probability $cn^{-1/2}$ for some $c>0$ by \eqref{eqn:progeny asymp}, we deduce that
\begin{align}\label{eqn:abs cont sum}
\begin{split}
&\pr{\exists u,v \in t_{\bullet}(T_n): u \preceq v \text{ and the bound at \eqref{eqn:dstar good} does not hold}} \\
&\leq \pr{|T_n| \geq n^2} + Cn^{1/2} \sum_{h = 0}^{n^2} \prstart{\exists u \in t_{\bullet}(T): u \preceq v_{2h} \text{ such that \eqref{eqn:dstar good} does not hold}}{\infty} \\
&\leq Cn^{-1/2} + Cn^{1/2} \sum_{h = 0}^{n^2} \sum_{i=0}^h \prstart{\left|d_n^*(v_{2i},v_{2h}) - \frac{\chi}{2}d_n^{\text{tr}}(v_{2i},v_{2h})\right| \leq \epsilon d_n^{\text{tr}}(v_{2i},v_{2h}) \vee n^{1/3}}{\infty}.
\end{split}
\end{align}
Note that, for a single pair $(v_{2i}, v_{2h})$, it holds that $d_n^{\text{tr}}(v_{2i},v_{2h})=2(h-i)$ and also
\[d_n^*(v_{2i},v_{2h}) \overset{(d)}{=} \sum_{j=1}^{h-i} Y_j,\]
where $(Y_i)_{i=1}^n$ is an i.i.d.\ sequence with law $ d_{Y+1}(U_1,U_2) $, where $Y$ is distributed according to the size-biased law $\hat{\mu}_{\circ}$, and $ d_{Y+1}(U_1,U_2) $ denotes the distance between two uniform vertices on the boundary of a copy of $G_{Y+1}$, i.e.\ an inserted graph with boundary length $Y+1$. By \cref{assn:offspring}(4) and \ref{assn:inserted graph general assumption}(1), it follows that there exists $\lambda>0$ such that $\E{e^{\lambda Y_1}} < \infty$, and moreover one can check that $\E{Y_1}=\chi$. Hence a standard large deviations estimate (see \autocite[Lemma 5.3]{StuflerTreeSurvey}, for example) and a Chernoff bound implies that there exists $c_{\epsilon}>0$ such that
\begin{align*}
\pr{\left|d_n^*(v_{2i},v_{2h}) - \frac{\chi}{2}d_n^{\text{tr}}(v_{2i},v_{2h})\right| \geq \epsilon d_n^{\text{tr}}(v_{2i},v_{2h}) \vee n^{1/3}} \leq e^{ - c_{\epsilon} n^{1/4}}.
\end{align*}
Substituting back into \eqref{eqn:abs cont sum} gives \eqref{eqn:prob to 0 final dis}, as required.
\end{proof}

With the above preparations in place, we are ready to tie up this subsection.

\begin{proof}[Proof of \cref{prop:dis to 0 dec tree}]
The overall result follows from Lemmas \ref{lem:dis Rn1}, \ref{lem:max diam} and \ref{lem:dis Rn3} since
\[\textsf{Dis} (\mathcal{R}_n) \leq \textsf{Dis} (\mathcal{R}^*_n) + \textsf{Dis} (\mathcal{R}^{***}_n) + \frac{\sigma}{\chi\sqrt{n}} \sup_{v \in T_n} \diam(G^{(v)}).\]
\end{proof}

\subsection{Convergence of measures}

Henceforth we will work on the probability space of \cref{prop:dis to 0 dec tree} and therefore assume that $\textsf{dis}(\mathcal{R}_n)\to 0$ in probability. As outlined in \cref{sctn:GHP topology}, this implies that the associated spaces converge in probability with respect to the pointed GH topology. We will extend \cref{prop:dis to 0 dec tree} to pointed GHP convergence by showing that the relevant measures converge on the canonical GH embedding associated with this correspondence. (This canonical embedding was defined in \cref{sctn:GHP topology}.)

To this end, we need to set out various notions. First, given $\epsilon > 0$, take $U \sim \textsf{Uniform}([0,\epsilon])$, independent of the other random variables introduced so far, and define a collection of intervals $(I_k^{\epsilon})_{k \geq 0}$ in $[0, \infty)$ by
\[I_k^{\epsilon} = [U+k\epsilon, U+(k+1)\epsilon]\label{fa IKu}\]
for each $k \geq 0$. We also introduce a measure $\nu^*_n$ on the real line that relates to the measure of interest on $\Dec (T_n)$. Recall from \cref{sctn:looptrees} that the integers in $[0, |T_n|-1]$ are in bijection with the vertices in $T_n$ (black and white), as illustrated in \cref{fig:contour coding looptree}. Recall also that for each $u \in t_{\circ}$, we let $G^{(u)}$ denote the copy of $(G_{\deg u}, d_{\deg u}, \nu_{\deg u}, \ell_{\deg u})$ that is inserted to replace the vertex $u$ in $\Dec (T_n)$. To lighten notation, we will write $X_u$ for $\nu_{\deg u} (G_{\deg u})$, in the specific realisation used as $G^{(u)}$. For a set $A \subset (0, \infty)$, we then define
\begin{equation}\label{eqn:def nu n star}
\nu^*_n (A) = \sum_{i \in \N: \frac{i}{n} \in A} \mathbbm{1}\{u_i \in t_{\circ}\} X_{u_i};
\end{equation}
in other words $\nu^*_n (A)$ is the sum of the volumes of graphs inserted at the white vertices with indices in the set $A$. Also let $\diam_{\T}(\pi(I_k^{\epsilon}))$ denote the diameter of the projection of $I_k^{\epsilon}$ in $\T$ (let this be equal to $0$ if the coding function of $\T$ does not intersect the interval $I_k^{\epsilon}$), and for $A \subset (0, \infty)$, write
\[\tilde{\pi}_n(A) = \left\{x \in \Dec (T_n): \exists s \in A \text{ with } (x,\pi(s)) \in \mathcal{R}_n\right\}.\]\label{fa pin tilde}
We then have the following basic estimates.

\begin{lem}\label{lem:intervals all good for enriched tree convergence}
\begin{enumerate}[(i)]
\item There exists $C<\infty$ such that, for all $\epsilon>0$,
\[\pr{\sup_{k \geq 0} \diam_{\T}(\pi(I_k^{\epsilon})) \leq \epsilon^{1/3}} \geq 1- C\epsilon^{1/2}.\]
\item Fix any $\epsilon > 0$. Then for any $\delta >0$, there exists $N< \infty$ such that for all $n \geq N$,
\[\sup_{k \geq 0} \left|\frac{1}{n} \nu^*_n(I_k^{\epsilon}) - \epsilon \beta^{-1} \right| \leq \delta\]
with probability at least $1-\epsilon$.
\end{enumerate}
\end{lem}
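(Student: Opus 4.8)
### Proof proposal for Lemma \ref{lem:intervals all good for enriched tree convergence}

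\textbf{Part (i).} The plan is to bound $\sup_k\diam_\T(\pi(I_k^\epsilon))$ by the modulus of continuity of the coding excursion $\B$ of $\T$. Indeed, for $s,t$ in the same interval $I_k^\epsilon$ we have $d_\T(\pi(s),\pi(t))\leq 2\,\mathrm{osc}_{I_k^\epsilon}(\B)$, so it is enough to control the event that the oscillation of $\B$ over every window of length $\epsilon$ exceeds $\tfrac12\epsilon^{1/3}$. Since $\B$ has law $N(\,\cdot\mid\zeta\geq 1)$, I would first condition on $\zeta$ and use the Vervaat transform (\cref{lem:Vervaat}) together with the absolute-continuity bound \eqref{eqn:RN deriv Vervaat}--\eqref{eqn:Vervaat cts} to compare, window by window, with a Brownian motion, for which the standard Lévy modulus of continuity estimate gives $\pr{\mathrm{osc}_{[0,\epsilon]}(B)\geq \tfrac12\epsilon^{1/3}}\leq C\exp(-c\epsilon^{-1/3})$. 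A union bound over the $O(\zeta/\epsilon)$ windows, followed by taking expectation over $\zeta$ (whose tail is light enough — e.g. $N(\zeta\geq x\mid \zeta\geq 1)$ decays like $x^{-1/2}$ with a Gaussian-type correction, certainly with finite moments of all orders), turns $C\epsilon^{-1}\exp(-c\epsilon^{-1/3})$ into something $o(\epsilon^{1/2})$, hence $\leq C\epsilon^{1/2}$ for a possibly larger $C$. The only mild care needed is that the Vervaat comparison requires windows of length less than $\tfrac34\zeta$; on the complementary (small-$\zeta$) event one can use a cruder bound since that event has probability decaying faster than any power.

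\textbf{Part (ii).} Fix $\epsilon>0$. The idea is that $\tfrac1n\nu^*_n(I_k^\epsilon)$ is a sum of roughly $\epsilon n$ i.i.d.-like terms $X_{u_i}$ over the white vertices $u_i$ with $i/n\in I_k^\epsilon$, each with conditional mean $\E{m_{i+1}(G_{i+1})}$; after averaging over the offspring/step law this is exactly the reciprocal of $\beta$ per unit of time (recall $\beta^{-1}=(1-\mu_\bullet(0))\sum_{i\geq1}\mu_\circ(i)\E{m_{i+1}(G_{i+1})}$). Concretely, via the time-reversal bijection of \cref{prop:tree offspring RW}, the increments of $\Zr$ along $[0,|T_n|]$ encode the types and sizes of the vertices $u_i$; the white vertices correspond to positive increments of $\Zr$, and the boundary length inserted there is $1$ plus that increment. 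So $\nu^*_n((j_1/n,j_2/n]) = \sum_{i=j_1+1}^{j_2}\mathbbm 1\{\Delta\Zr_i>0\}\,m^{(i)}$, where $m^{(i)}$ is, conditionally on $\Delta\Zr_i$, an independent copy of $m_{1+\Delta\Zr_i}(G_{1+\Delta\Zr_i})$. First I would prove the estimate for an \emph{unconditioned} centred random walk $\Zr$ run for $\epsilon n$ steps: the partial sums $\sum_{i=1}^{\epsilon n}\mathbbm1\{\Delta\Zr_i>0\}m^{(i)}$ form a random walk with step mean $\beta^{-1}$ (finite, by \cref{assn:inserted graph general assumption}(2), which gives $\E{m_i(G_i)}\leq Ci$ and hence, combined with the exponential tail of $\mu_\circ$ from \cref{assn:offspring}(4), a finite exponential moment for a single step $\mathbbm1\{\Delta\Zr_i>0\}m^{(i)}$). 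A Doob/maximal inequality plus a Chernoff bound then gives that $\sup_{k}\bigl|\tfrac1n\nu^*_n(I_k^\epsilon)-\epsilon\beta^{-1}\bigr|\leq\delta$ fails with probability at most $C\epsilon^{-1}e^{-c_\delta n}$, where the $\epsilon^{-1}$ is the union bound over the $\leq \zeta/\epsilon$ intervals, which is harmless as $n\to\infty$. Transferring to the conditioned excursion of $\Zr$ (conditioned to have length $\geq n$) costs only a factor $\leq Cn^{1/2}$ by \eqref{eqn:progeny asymp}, still killed by $e^{-c_\delta n}$, after first intersecting with the event $\{|T_n|\leq n^2\}$ (whose complement has probability $\leq Cn^{-1/2}$, which is $\leq \epsilon$ for $n$ large) to keep the number of intervals polynomial; alternatively one invokes the discrete Vervaat comparison \eqref{eqn:Vervaat dis} directly on each window of length $\epsilon n<\tfrac34 n$.

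\textbf{Main obstacle.} The genuinely delicate point in both parts is passing from the unconditioned walk/Brownian-motion estimates to the conditioned objects uniformly over all windows $k\geq0$ simultaneously. For (i) this is the interplay between the Vervaat window-length restriction $T<\tfrac34\zeta$ and the need to cover windows near the end of the excursion, handled by absorbing the endpoint windows into a rare event; for (ii) it is ensuring the union bound over $k$ only contributes a polynomial (in $n$) factor, which is why one first restricts to $\{|T_n|\leq n^2\}$ before applying \eqref{eqn:progeny asymp} and the exponential tail bound. Everything else — the exponential moment of a single decorated step, the identification of the mean as $\beta^{-1}$, the oscillation bound for $\B$ — is routine given \cref{assn:offspring} and \cref{assn:inserted graph general assumption}.
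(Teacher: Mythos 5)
Your overall strategy (bound each length-$\epsilon$ window via the Vervaat comparison against an unconditioned Brownian motion/random walk, then a union bound over windows) is the same as the paper's, but two specific steps fail as written. For part (i), the step ``take expectation over $\zeta$'' is not available: under $N(\cdot\mid\zeta\geq 1)$ the lifetime has tail $\pr{\zeta\geq x}\asymp x^{-1/2}$, so it has \emph{infinite mean}, not ``finite moments of all orders''; the expectation of your union bound $C(\zeta/\epsilon)\exp(-c\epsilon^{-1/3})$ therefore diverges. The correct repair is a truncation: work on $\{\zeta\leq\epsilon^{-1}\}$, whose complement has probability at most $C\epsilon^{1/2}$, and then union over at most $\lceil\epsilon^{-2}\rceil$ windows — this is exactly what the paper does, and it is precisely this truncation that produces the $C\epsilon^{1/2}$ error term in the statement, which your argument (claiming everything is $o(\epsilon^{1/2})$) cannot account for. (One could alternatively salvage your computation via $\E{\min(1,a\zeta)}$ using the true $x^{-1/2}$ tail, but not via the moment claim you make.)

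For part (ii), the Chernoff bound is unjustified: \cref{assn:inserted graph general assumption}(2) controls only the variance and third moment of $m_i(G_i)$, so the step $\mathbbm{1}\{\Zr_i-\Zr_{i-1}>0\}\,m^{(i)}$ has a finite third moment (using the exponential tail of $\mu_\circ$ for the boundary length), but no exponential moment is assumed for the inserted volume at a given boundary length. Without exponential concentration, your route of paying a factor $Cn^{1/2}$ to undo the conditioning and then union-bounding over up to $n/\epsilon$ windows on $\{|T_n|\leq n^2\}$ does not close, since a rateless LLN per window is too weak. Two repairs: (a) the paper's route — apply \eqref{eqn:Vervaat dis} window by window, but truncate the excursion length at $\eta^{-1}n$ (cost $C\eta^{1/2}<\epsilon$ by \eqref{eqn:progeny asymp}) so that the number of windows is bounded in $n$ and the ordinary weak law of large numbers suffices with no rate; or (b) keep your $n^2$ truncation but replace Chernoff by a Rosenthal/third-moment estimate giving a per-window deviation probability of order $n^{-3/2}$, which survives a union over $O(n/\epsilon)$ windows. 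Your identification of the mean as $\beta^{-1}$ and your worry about windows near the end of the excursion are fine (the Vervaat bounds already take indices modulo the lifetime).
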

\begin{proof}
\begin{enumerate}[(i)]
\item First note that it follows from standard results (for example, see \autocite[Section 2]{LeGIto}) that there exists $C<\infty$ such that $\pr{\nu (\T) \geq \epsilon^{-1}} \leq C\epsilon^{1/2}$ for all sufficiently small $\epsilon > 0$. Secondly, let $(B_t)_{t \geq 0}$ be a standard one dimensional Brownian motion. Since $(B_t)_{t \geq 0}$ is continuous, it follows from the strong Markov property that there exists $c>0$ such that
\[\pr{ \sup_{t \in [0,\epsilon]} |B_t| \geq \lambda \sqrt{\epsilon}} = \pr{ \sup_{t \in [0,1]} |B_t| \geq \lambda} \leq \pr{ \sup_{t \in [0,1]} |B_t| \geq 1}^{\lfloor \lambda \rfloor} \leq e^{-c\lambda}\]
for all $\lambda \geq 1$. Now let $M = \nu (\T)$ and conditionally on $M$, choose $U \sim \textsf{Uniform}([0,M])$. We work on the event $\{M \leq \epsilon^{-1}\}$. Since $M \geq 1$ by definition, it follows from the Vervaat transform \eqref{eqn:Vervaat cts} that there exist constants $c>0, C<\infty$ such that
\begin{equation}\label{eqn:osc interval}
\pr{ \sup_{t \in [0,\epsilon]} |\B_{U+t}-\B_U| \geq \lambda \sqrt{\epsilon}} \leq Ce^{-c\lambda}
\end{equation}
for all $\lambda \geq 1$, where $\B$ is the excursion coding $\T_{\geq 1}$. Next, for each ${i=1, \ldots, \lceil \epsilon^{-2} \rceil}$, let $U_i = (U + i\epsilon)$, $\textrm{mod}(M)$. Note that the marginal of each $U_i$ is $\textsf{Uniform}([0,M])$. Moreover, since $M<\epsilon^{-1}$, we have that
\[\sup_{k \geq 0} \diam_{\T}(\pi(I_k^{\epsilon})) \leq 2 \sup_{1 \leq i \leq \lceil \epsilon^{-2} \rceil}  \sup_{t \in [0,\epsilon]} |\B_{U_i+t}-\B_{U_i}|.\]
Therefore, setting $\lambda = \epsilon^{-1/6}$ in \eqref{eqn:osc interval} and taking a union bound over $i$, we deduce that
\begin{align*}
\lefteqn{\pr{\sup_{k \geq 0} \diam_{\T}(\pi(I_k^{\epsilon}))\geq \epsilon^{1/3}} }\\
&\leq \pr{ M \geq \epsilon^{-1}} + \pr{\sup_{1 \leq i \leq \lceil \epsilon^{-2}\rceil}  \sup_{t \in [0,\epsilon]} |X_{U_i+t}-X_{U_i}| \geq \frac{1}{2}\epsilon^{1/3}} \\
&\leq C\epsilon^{1/2} + C\epsilon^{-2}e^{-c \epsilon^{-1/6}},
\end{align*}
which yields the result.

\item First sample $M_n := |T_n|-1$, then take $U^*$ uniform on $\{0, \ldots, M_n\}$, independent of everything else, and finally set $U^*_n = \frac{U^*}{n}$. By \eqref{eqn:progeny asymp}, there exists a constant $C<\infty$ such that for any $\eta > 0$, $M_n \leq n\eta^{-1}$ with probability at least $1-C\eta^{1/2}$. Given $M_n$, we apply \eqref{eqn:Vervaat dis} to consider $\nu^*_n([U^*_n, U^*_n+ \epsilon ])$ (we take all time indices modulo $M_n$, so that if $U^* + \epsilon n \geq M_n$, we replace $[U^*_n, U^*_n+ \epsilon]$ with $[U^*_n, \frac{M_n}{n}] \cup [0, U^*_n + \epsilon - \frac{M_n}{n}]$). This corresponds to an interval of length $\epsilon n$ for the random walk coding $T_n$, so we use \eqref{eqn:Vervaat dis} to compare this to the relevant sums on the interval $[0, \epsilon n]$ for an unconditioned random walk. Each jump on this interval is i.i.d.\ with law $\mu$, where $\mu$ is as in \eqref{eqn:mu def general case}, and given that the $i^{th}$ jump has size $-j$, the associated measure of the corresponding graph satisfies
    \[X_{u_i} \overset{(d)}{=} \nu_{j+1} (G_{j+1}),\]
    and also the asymptotics of \eqref{eqn:inserted graphs mean variance}. For $j \geq 1$ we define $m_j$ to be the mean of an inserted graph with boundary length $j$, i.e.\ $m_j = \nu_{j} (G_{j})$ and extend this notation to non-positive $j$ by setting $m_j=0$ for all $j \leq 0$. We then have for any $i \in [0, \lfloor \epsilon n \rfloor]$ that
\begin{align*}
\E{X_{u_i} \mathbbm{1}\{u_i \in t_{\circ}\}} &= \E{ \econd{X_{u_i} \mathbbm{1}\{u_i \in t_{\circ}\}}{Z_i-Z_{i-1}}} \\
&= \E{m_{\Zr_1-\Zr_0+1}} \\
&= \sum_{k \geq 1}\mu(-k)m_{k+1}\\
&= \beta^{-1},
\end{align*}
where $\beta$ was defined in \cref{def:scaling consts}. (Note that the final sum is finite since $\mu$ is critical and $\frac{m_k}{k}$ is bounded as $k \to \infty$ by \cref{assn:inserted graph general assumption}(2).) It therefore follows from \eqref{eqn:Vervaat dis} and the strong law of large numbers that for any $\delta>0$, we have that
\[\pr{ \left|\frac{1}{n} \nu^*_n([U^*_n, U^*_n+ \epsilon ]) - \epsilon \beta^{-1}\right|\geq  \delta} \to 0.\]
(Note that it also follows from \eqref{eqn:inserted graphs mean variance} that any error obtained from the fact that $\epsilon n$ is not an integer is negligible.) Since $(U^*+ k\epsilon n) \mod M_n$ has the same marginal distribution as $U^*$ for any $k \geq 1$, we deduce that the same bound holds for the quantity $\nu^*_n([U^*_n+ k\epsilon , U^*_n+(k+1)\epsilon ])$ for all $0 \leq k \leq \lceil \frac{M_n}{\epsilon n} \rceil$ (again taking quantities modulo $M_n$ as appropriate). Taking a union bound and conditioning on the value of $M_n$, it follows that
\begin{equation*}
\pr{ \exists k \leq \left\lceil \frac{M_n}{\epsilon n} \right\rceil: \left|\frac{1}{n} \nu^*_n([U^*_n+ k \epsilon , U^*_n+(k+1) \epsilon ]) - \epsilon \beta^{-1}\right|\geq  \delta} \to 0.
\end{equation*}
Moreover, the event in the above display is unchanged on replacing $U^*$ with $U^{**} = U^* \mod \epsilon n $. Finally, note that we can couple $U^{**}$ and $\lfloor nU\rfloor$ (recall that $U \sim \textsf{Uniform}([0, \epsilon])$ so that they are equal with probability at least $1-\frac{\epsilon}{2}$.
\end{enumerate}
\end{proof}

We are now ready to prove that the measures converge, which is the last ingredient required to establish \cref{prop:GHP convergence of enriched trees}.

\begin{proof}[Proof of \cref{prop:GHP convergence of enriched trees}]
By \cref{prop:dis to 0 dec tree}, we can work on $(\Omega, \F, \Pb)$ and assume that $r_n := \textsf{dis} (\mathcal{R}_n) \to 0$ in probability. To prove that the claimed measures also converge on this space, set $\tT_n = \frac{\sigma}{\chi\sqrt{n}} \Dec (T_n)$ and take the canonical Gromov-Hausdorff embedding $F_n = \tT_n \sqcup \T$ endowed with the metric
\[D_{F_n}(x,y) = \begin{cases}
\frac{\sigma}{\chi \sqrt{n}}\td_n, & \text{ if } x, y \in \tT_n, \\
d(x,y) ,& \text{ if } x, y \in \T ,\\
\inf_{u, v \in \mathcal{R}_n} (\frac{\sigma}{\chi  \sqrt{n}}\td_n + d(y,v) + \frac{1}{2} r_n), & \text{ if } x \in \tT_n, y \in \T,
\end{cases}\]\label{fa canonical GHP}
as defined in \cref{sctn:GHP topology}. We claim that $d^{F_n}_P(\frac{\beta}{n}\tnu_n, \nut) \rightarrow 0$ in probability as $n \rightarrow \infty$.

For this proof, given a set $A \subset F_n$ and $\epsilon>0$, the notation $A^{\epsilon}$ will refer to the $\epsilon$-fattening of $A$ with respect to the metric $D_{F_n}$. Furthermore, recall from just above \cref{lem:intervals all good for enriched tree convergence} that for $\epsilon > 0, k \in \Z$, we defined the interval $I_k^{\epsilon}$ by
\[I_k^{\epsilon} = [U+k\epsilon, U+(k+1)\epsilon].\]
Fix $\epsilon>0$. For a set $A \subset \Dec (T_n)$, we write $k \in \mathcal{R}_n(A)$ if there exists $s \in I_{k,\epsilon}$ and $x \in A$ with $(x, \pi (s)) \in \mathcal{R}_n$. We also let
\[D_n = \frac{\sigma}{\chi}\frac{1}{\sqrt{n}}\sup_{u \in t_{\circ}} \diam (G^{(u)})\label{fa diam Dn},\]
the largest rescaled diameter of an inserted graph. Note that under \cref{assn:inserted graph general assumption}, $D_n \to 0$ in probability as $n \to \infty$ by \cref{lem:max diam}. We will therefore work on the event $2D_n \leq \epsilon$ for the rest of this proof.

\noindent
\textbf{Step 1: $\frac{\beta}{n}\tnu_n(A) \leq \nu_{\T}(A^{\epsilon})+\epsilon$ for all $A \subset \Dec (T_n)$.}\\
Take a set $A_n$ of vertices in $\Dec (T_n)$, and set
\[A'_n = \bigcup_{k \in \mathcal{R}_n(A_n^{2D_n})} I_{k, \epsilon}.\]
Let $A_n'' = \pi(A'_n)$. (Recall that $\pi$ denotes the projection from $[0,\infty)$ onto $\T$.) We will show that there exists an event of probability at least $1-C\epsilon^{1/2}-o(1)$ firstly such that $A_n'' \subset A_n^{2\epsilon^{1/3}}$ for all possible choices of $A_n$, and secondly that $\frac{\beta}{n}\tnu_n (A_n) \leq \nut (A_n'') + \epsilon$, which therefore implies that $\frac{\beta}{n}\tnu_n (A_n) \leq \nut (A_n^{2\epsilon^{1/3}}) + \epsilon$ for any choice of $A_n$, provided that $n$ is large enough. For the first of these, note that it follows from the definitions above that for any $x \in A_n''$ there exists $t \in A_n'$ with $\pi(t) = x$, and therefore a $k$ such that
\begin{enumerate}[(1)]
\item $t \in I_k^{\epsilon} \subset A_n'$;
\item there additionally exists $s \in I_k^{\epsilon}$ either  such that $u_{\lfloor ns \rfloor}$ is white and $G^{(u_{\lfloor ns \rfloor })} \cap A_n^{2D_n} \neq \emptyset$, or otherwise such that $u_{\lfloor ns \rfloor}$ is black and $u_{\lfloor ns \rfloor }\cap A_n^{2D_n} \neq \emptyset$.
\end{enumerate}
By definition, if $y$ is the aforementioned representative in $G^{(u_{\lfloor ns \rfloor })} \cap A_n^{2D_n}$ or $u_{\lfloor ns \rfloor }\cap A_n^{2D_n}$ for this specific value of $s$ (as guaranteed to exist by point (2) above), this means that $(y, \pi(s)) \in \mathcal{R}_n$ and therefore $D_F(\pi(s),y) = \frac{1}{2} r_n$. Moreover, by \cref{lem:intervals all good for enriched tree convergence}, there exists $C<\infty$ such that
\begin{equation}\label{eqn:diameter bound interval reminder}
\sup_{k \geq 0} \diam_{\T}(\pi(I_k^{\epsilon})) \leq \epsilon^{1/3}
\end{equation}
with probability at least $1- C\epsilon^{1/2}$. On this event, we also have that $D_F(\pi(s),\pi(t)) \leq \epsilon^{1/3}$ and therefore $D_F(x,y) = D_F(\pi(t),y) \leq \epsilon^{1/3} + \frac{1}{2} r_n$ by the triangle inequality. In other words, $x \in A_n^{2D_n+r_n+\epsilon^{1/3}}$. Moreover, since the event in \eqref{eqn:diameter bound interval reminder} does not depend on the choice of $A_n$ or any of the vertices in it, this therefore implies that $A_n'' \subset A_n^{2D_n+r_n+\epsilon^{1/3}} \subset A_n^{2\epsilon^{1/3}}$ (for all sufficiently large $n$ and under the above assumption on $D_n$).

We now turn to checking $\frac{\beta}{n}\tnu_n (A_n) \leq \nut (A_n'') + \epsilon$. Recall that, for $A \subset (0, \infty)$, we write $\tilde{\pi}_n(A) = \{x \in \Dec (T_n): \exists s \in A \text{ with } (x,\pi(s)) \in \mathcal{R}_n\}$. Note that $A_n \subset \tilde{\pi}_n(A_n')$ by definition. Moreover, if $x \in A_n \cap t_{\bullet}$, then the structure of the coding bijection between $T_n$ and its excursion ensures that there exists $u\in t_\circ$ with $x \in G^{(u)}\subseteq A_n^{2D_n}$. By the definition of $A_n'$, it therefore follows that
\[\tnu_n (A_n) \leq \nu^*_n (A_n').\]
Moreover, by \cref{lem:intervals all good for enriched tree convergence}(ii), $\sup_{k \geq 0} \left|\frac{1}{n} \nu_n^* (I_k^{\epsilon}) - \epsilon \beta^{-1} \right| <\varepsilon^2$ with high probability for large $n$, and since $A_n'$ is a disjoint union of intervals of the form $I_k^{\epsilon}$ and $\text{Leb}(I_k^{\epsilon}) = \epsilon$ for all $k$, this implies that
\[\frac{1}{n}\tnu_n (A_n) \leq \frac{1}{n}\nu^*_n (A_n') \leq \text{Leb} (A_n')\beta^{-1} + \epsilon  = \nut (A_n'')\beta^{-1} + \epsilon,\]
whenever $\nut (\T) \leq \epsilon^{-1}$, which happens with probability at least $1 - C\epsilon^{1/2}$. In particular, since $\epsilon>0$ was fixed, this implies the result for all large enough $n$.

\noindent
\textbf{Step 2: $\nu_{\T}(B) \leq \frac{\beta}{n}\tnu_n(B^{\epsilon}) +\epsilon$ for all $B \subset \Dec (T_n)$.}\\
For the reverse direction, take a set $B \subset \T$. Set $B' = \pi^{-1} (B)$, and let
\[B'' = \bigcup_{k: I_k^{\epsilon} \cap B' \neq \emptyset} I_k^{\epsilon}.\]
On the event in \eqref{eqn:diameter bound interval reminder} and by definition of the correspondence, we have that
\[\tilde{\pi}_n(B'') \subset \pi(B'')^{r_n} \subset B^{\epsilon^{1/3} + r_n}.\]
Clearly this implies that $\tilde{\pi}_n(B'')^{D_n} \subset B^{\epsilon^{1/3} + r_n+D_n}$. Moreover, similarly to the previous step, if $x \in \tilde{\pi}_n(B'') \cap t_{\bullet}$, it follows that there exists $u\in t_\circ$ such that $x\in G^{(u)}\subseteq \tilde{\pi}_n(B'')^{D_n}$, and hence $\nu^*_n (B'') \leq \tnu_n (\tilde{\pi}_n(B'')^{D_n})$. Therefore, again because $B''$ is the disjoint union of intervals of the form $I_k^{\epsilon}$ and by definition of $\nu_n^*$, we have on the events $\nut (\T) \leq \epsilon^{-1}$ and $\sup_{k \geq 0} \left|\frac{1}{n} \nu_n^* (I_k^{\epsilon}) - \epsilon \beta^{-1} \right| <\varepsilon^2$ that
\[\nut (B) \leq \text{Leb}(B'') \leq \frac{\beta}{ n} \nu^*_n (B'') + \epsilon  \leq \frac{\beta}{ n} \tnu_n (\tilde{\pi}_n(B'')^{D_n}) + \epsilon \leq \frac{\beta }{n} \tnu_n (B^{\epsilon^{1/3} + r_n+D_n}) + \epsilon\]
with high probability as $n \to \infty$. This completes the proof.
\end{proof}

\section{Proof of \cref{thm:main scaling lim}}\label{sctn:main result proof}

In this section, we will use the results of \cref{sctn:dec tree limits} to prove \cref{thm:main scaling lim}. In order to do this we will use \cref{def:dec tree construction of cluster} to show that the cluster is close to a decorated two-type Galton-Watson tree, so that the result will essentially follow from \cref{prop:GHP convergence of enriched trees}.

We start by taking a random walk $Z$ and the cluster $\CC_Z$ as defined in \cref{def:dec tree construction of cluster} (so $Z$ corresponds to the peeling exploration process and therefore starts at $1$, has step distribution $\mu$ as in \eqref{eqn:mu def}, and is killed the first time it hits the non-positive integers). Note that, by Propositions \ref{prop:peeling probs} and \ref{prop:tree distribution} and Remark \ref{exprem}, Assumption \ref{assn:offspring} holds for the associated two-type offspring distribution appearing in \cref{prop:tree distribution}. Set $\tau = \inf\{ n \geq 0: Z_n \leq 0\}$. We will first prove the scaling limit conditionally on the event $\{\tau \geq \beta n\}$, where $\beta$ is the constant of \cref{def:scaling consts}, which appears naturally in the scaling and asymptotically determines the ratio of the excursion length $\tau$ to the total size of the cluster $\CC_Z$. A key point is that an excursion of $Z$ looks very similar to the time-reversal of the excursion of $\Zr$ as appearing in \cref{prop:tree offspring RW}. It is not \textit{exactly} equal in law because the killing conditions for $Z$ and $\Zr$ are not completely symmetric, however on conditioning their lifetimes to be large the difference becomes negligible.
\label{fa tau}

Our strategy will be to first verify that \cref{assn:inserted graph general assumption} holds for the inserted percolated $i$-gons (this is done in \cref{prop:verify assn}), so that we can immediately apply \cref{prop:GHP convergence of enriched trees} to the associated decorated two-type Galton-Watson with offspring distributions $\mu_{\bullet}$ and $\mu_{\circ}$ appearing in \cref{prop:tree distribution}. We will then verify in \cref{prop:dGHP small under coupling} that $\CC_Z$ is very close to (a similarly conditioned version of) this decorated tree under the event $\{\tau \geq \beta n\}$. Finally, in \cref{prop:switch final conditioning} we will switch this to a conditioning on the event $\{\CC_Z \geq n\}$ by verifying the conditions of \cref{lem:switch conditioning}.

\begin{prop}\label{prop:verify assn}
For each $i$, suppose $G_i$ is a $p_c$-percolated $i$-gon, selected according to $\mathbb{P}_{i,q}$, where $q=\frac12\alpha^2(1-\alpha)$, with black boundary condition. Let $d_i$ be the graph metric on $G_i$ and $\nu_i$ be the measure on $G_i$ given by Remark \ref{listrem}(4), where the labelling $\ell_i$ of boundary vertices is chosen uniformly amongst the possibilities. The family $((G_i,d_i,\nu_i,\ell_i))_{i\geq 1}$ then satisfy \cref{assn:inserted graph general assumption}.
\end{prop}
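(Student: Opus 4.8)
The plan is to verify the three requirements of \cref{assn:inserted graph general assumption} in turn. The key structural observation is that for $\alpha\in(\tfrac23,1)$ the Boltzmann weight $q=\tfrac12\alpha^2(1-\alpha)$ is \emph{strictly smaller} than the critical weight $\tfrac{2}{27}$: the two coincide at $\alpha=\tfrac23$, and $\alpha\mapsto\alpha^2(1-\alpha)$ is strictly decreasing on $(\tfrac23,1)$. Hence each $G_i$ is the black cluster of the boundary inside a genuinely \emph{subcritical} Boltzmann triangulation of the $i$-gon, an object whose volume and diameter are of order $i$ with strong tail control.

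For the bounds in part~(1), note first that under the black boundary condition every boundary vertex of the $i$-gon is black, so every boundary edge has two black endpoints and survives the percolation; thus $\partial G_i$ is a simple $i$-cycle contained in $G_i$, and routing along it gives $\sup_{u,v\in\partial G_i}d_i(u,v)\le\lfloor i/2\rfloor\le i$ deterministically. Since $G_i$ is connected and, by \cref{listrem}(4), $\nu_i(G_i)=\#G_i-1$, we also have the crude deterministic bound
\[\diam(G_i,d_i)\le \#G_i-1=\nu_i(G_i)\le i+\#T_i,\]
where $T_i\sim\Pb_{i,q}$ is the underlying Boltzmann triangulation and $\#T_i$ is its number of internal vertices. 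Therefore the diameter moment condition in~(1) and the third-moment condition in~(2) both follow from $\limsup_i i^{-3}\E{(\#T_i)^3}<\infty$. This is a standard feature of subcritical Boltzmann triangulations of the polygon: their boundary peeling (see e.g.\ \autocite[Section~3.3]{curien2019peeling}, and the computations underlying \autocite{angelray2015,ray2014geometry}) is governed by a perimeter process with strictly negative drift, which in fact yields the far stronger uniform bound $\pr{\#T_i\ge \lambda i}\le Ce^{-c\lambda}$ for all $i\ge1$ and $\lambda\ge1$.

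It remains to establish the variance bound $\limsup_i i^{-1}\Var{\nu_i(G_i)}<\infty$ in part~(2), and this is the substantive point, since the exponential tail above only yields $\Var{\nu_i(G_i)}=O(i^2)$; we genuinely need the fluctuations of the cluster volume to be \emph{diffusive}. The approach I would take is to exploit the spatial Markov property of percolated Boltzmann triangulations --- the device Ray uses in \autocite{ray2014geometry} --- by running a peeling exploration of the percolation interface inside the $i$-gon, started from its all-black boundary; this is the finite-volume analogue of the exploration of Section~\ref{sctn:peeling example}. Because the underlying map is subcritical, the exploration exhausts $G_i$ in $\Theta(i)$ steps, and, organising the revealed internal vertices and swallowed sub-polygons appropriately, one can write $\nu_i(G_i)$ as a sum of $\Theta(i)$ contributions that are nearly independent and have uniformly bounded exponential moments. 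A standard second-moment estimate (of renewal type, absorbing the weak dependence coming from the fluctuating perimeter) then gives $\Var{\nu_i(G_i)}=O(i)$; the block inputs needed --- exponential tails for the perimeters and volumes of the swallowed percolated regions, together with the spatial Markov decomposition --- are in essence already available in \autocite{ray2014geometry}.

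The main obstacle is this last step: one must set up the interface peeling inside a finite polygon carefully enough that the successive revealed pieces form a renewal-type sequence with exponential tails, and confirm that the subcriticality of $q$ forces the exploration to terminate after $\Theta(i)$ steps with diffusive fluctuations; once this structural decomposition is in hand, the remaining second-moment and tail computations are routine. Part~(1) and the third-moment part of~(2), by contrast, are immediate from the crude volume bound together with known properties of subcritical Boltzmann triangulations.
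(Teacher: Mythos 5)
Your treatment of part (1) and of the diameter moment matches the paper: the boundary bound is immediate from the loop structure, and $\diam(G_i,d_i)\leq \nu_i(G_i)$ reduces the diameter condition to the volume moments. Your route to the third moment (exponential tails at scale $i$ for subcritical Boltzmann triangulations) is a plausible alternative to what the paper does, although the paper obtains it more directly from the factorial-moment identity $\E{|G_i\setminus\partial G_i|(|G_i\setminus\partial G_i|-1)(|G_i\setminus\partial G_i|-2)}=q^3Z_i'''(q)/Z_i(q)$ together with the exact expression for the partition function $Z_i$ in \autocite[Proposition 3.3]{ray2014geometry}, with no peeling or tail estimates required; your uniform tail bound itself would in any case need a proof or citation.

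The genuine gap is the variance bound, which you correctly identify as the substantive point and then do not prove. Your proposal replaces it with a programme -- an interface-peeling exploration inside the finite $i$-gon, a renewal-type decomposition into $\Theta(i)$ nearly independent blocks with exponential tails, and a second-moment estimate absorbing the dependence through the fluctuating perimeter -- and you explicitly concede that setting up this renewal structure is ``the main obstacle''. That structural step is precisely what is missing: nothing in the proposal constructs the decomposition, controls the dependence between blocks, or shows the exploration terminates diffusively, so as written the proposal does not establish $\Var{\nu_i(G_i)}=O(i)$. The paper needs none of this: the bound $\Var{\nu_i(G_i)}\leq Ci$ is exactly the content of \autocite[Proposition 3.4]{ray2014geometry}, obtained there by the same generating-function computation (second factorial moment $=q^2Z_i''(q)/Z_i(q)$ combined with the closed form of $Z_i$), so the step you flag as an open obstacle is in fact a one-line citation, and the third moment is handled by extending the identical calculation to the third derivative rather than by a new probabilistic argument.
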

\begin{proof}
Note that the first part of point 1 of \cref{assn:inserted graph general assumption} is satisfied trivially due to the loop boundary structure. The third moment bound for the diameter will follow from that of the volume in point 2, since $\nu_i(G_i)$ is a trivial upper bound for $\diam(G_i, d_i)$ in this case. As for point 2, we have from \autocite[Proposition 3.4]{ray2014geometry} that
\[\mathrm{Var}\left(\nu_i(G_i)\right)\leq Ci,\]
which gives the first requirement. For the third moment bound, one can follow the same strategy as in \autocite[Proposition 3.4]{ray2014geometry}. Indeed, we have that
\[\mathbb{E}\left(|G_i\backslash\partial G_i|\left(|G_i\backslash\partial G_i|-1\right)\left(|G_i\backslash\partial G_i|-2\right)\right)=\frac{q^3Z_i'''(q)}{Z_i(q)},\]
where $Z_i$ is the generating function for the internal vertices of a map with $i$ boundary vertices, as given in \autocite[Definition 3.2]{ray2014geometry}. From this, the result is a consequence of the exact expression for $Z_i$ given in \autocite[Proposition 3.3]{ray2014geometry}; we leave the details to the reader.
\end{proof}

To show that the discrepancy between $\CC_{Z}$ and a decorated tree is small, as we will soon do in \cref{prop:dGHP small under coupling}, the following description of the limiting distribution of the final increment of $Z$ and length of the additional path section on the negative axis will be helpful.

\begin{lem}\label{lem:RW error small}
Conditionally on $\tau\geq n$, as $n\rightarrow\infty$, the pair $(Z_{\tau-1},-Z_\tau)$ converges in distribution to a pair of random variables $(L,K)$ taking values in $\mathbb{N}\times\N_0$. In particular, we have that
\[\mathbb{P}\left((L,K)=(l,k)\right)\propto l\overleftarrow{\mu}\left(l+k\right)\propto lp_{l+k},\qquad (l,k)\in \mathbb{N}\times\N_0,\]
where $\overleftarrow{\mu}$ is as in \eqref{eqn:mu def general case background reversed}, and $p_i$ are the peeling probabilities, as described in Proposition \ref{prop:peeling probs}.
\end{lem}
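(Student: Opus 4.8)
The plan is to condition on $\{\tau\ge n\}$ and to exploit that this event depends only on $(Z_0,\dots,Z_{n-1})$. On $\{\tau\ge n\}$ we have $Z_{n-1}\ge 1$, and by the Markov property at time $n-1$ the walk $(Z_{n-1+j})_{j\ge 0}$ is, given $\mathcal F_{n-1}$, a fresh copy of $Z$ started from $Z_{n-1}$, with $\tau=(n-1)+\tau''$ for $\tau''$ its killing time; hence $(Z_{\tau-1},-Z_\tau)$ is a deterministic functional of that fresh walk. Writing $\psi_{l,k}(x):=\mathbb P_x(Z_{\tau-1}=l,\,-Z_\tau=k)$ for the corresponding quantity when $Z$ is run freely from $x\ge 1$, we therefore have
\[
\mathbb P\big((Z_{\tau-1},-Z_\tau)=(l,k)\,\big|\,\tau\ge n\big)=\mathbb E\big[\psi_{l,k}(Z_{n-1})\,\big|\,\tau\ge n\big].
\]
Since $0\le\psi_{l,k}\le 1$, and since it is classical that the walk conditioned to survive lies at distance of order $\sqrt n$ from the absorbing boundary at the late time $n-1$ (indeed the rescaled conditioned walk converges to a Brownian meander, so $Z_{n-1}\to\infty$ in probability conditionally on $\{\tau\ge n\}$), it suffices to prove that $\psi_{l,k}(x)$ converges to a limit $\psi_{l,k}(\infty)$ as $x\to\infty$ and to identify that limit; the displayed conditional probability then converges to $\psi_{l,k}(\infty)$.

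To compute $\psi_{l,k}(\infty)$, I would decompose at the last step before killing: in order that $Z_{\tau-1}=l$ and $Z_\tau=-k$, the walk from $x$ must remain in $\{\ge 1\}$ until some visit to $l$ and then make a jump of size $-(l+k)$, so
\[
\psi_{l,k}(x)=\overleftarrow{\mu}(l+k)\,G_x(l),\qquad
G_x(l):=\mathbb E_x\Big[\#\{m\ge 0:\,Z_i\ge 1\ \forall i\le m,\ Z_m=l\}\Big],
\]
where $\overleftarrow{\mu}(l+k)=\mu(-(l+k))$ is the probability of the killing jump and $G_x$ is the Green function of $Z$ killed on entering $\{x\le 0\}$. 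The crux is the claim that $\lim_{x\to\infty}G_x(l)$ exists and equals $c\,l$ for a constant $c\in(0,\infty)$ independent of $l$. That this limit exists and is, up to a multiplicative constant, a positive harmonic function for the killed walk is standard in the theory of killed random walks (the Green function converges to the minimal positive harmonic function as the starting point recedes from the absorbing boundary; this is also encoded in the local limit theorems for random walks conditioned to stay positive). The feature special to our setting is that this harmonic function is \emph{linear}: writing out the harmonicity identity $\tilde h(l)=\sum_{l'\ge 1}\tilde h(l')\,\mu(l-l')$ and using only that $\mu$ is centred and supported on $\{1,-1,-2,\dots\}$ (so that from any level $\ge 2$ the walk cannot enter $\{x\le 0\}$ in one step), a short direct computation shows that $l\mapsto l$ solves it, and uniqueness of the positive harmonic function then gives $\tilde h(l)=cl$. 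This linearity reflects precisely the fact that $Z$ has no upward jump other than $+1$.

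Combining the two displays gives $\psi_{l,k}(\infty)=c\,l\,\overleftarrow{\mu}(l+k)$, and since $\overleftarrow{\mu}(l+k)=\mu(-(l+k))$ is proportional to $p_{l+k}$ by \eqref{eqn:mu def}, this is the asserted form. The normalisation is finite, $\sum_{l\ge 1,\,k\ge 0}l\,\overleftarrow{\mu}(l+k)=\sum_{s\ge 1}\tfrac{s(s+1)}{2}\,\overleftarrow{\mu}(s)<\infty$, because $(p_m)_{m\ge1}$ has exponentially decaying tails (\cref{prop:peeling probs}); so $(L,K)$ is a genuine $\mathbb N\times\mathbb N_0$-valued random variable. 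To pass from pointwise convergence of the conditional probabilities to convergence in distribution, one finally rules out escape of mass, which again follows from the exponential tails of $\mu$ (large undershoots $Z_{\tau-1}$ and large overshoots $-Z_\tau$ are uniformly unlikely in $n$), or by matching against the asymptotics $\mathbb P(\tau\ge n)\sim cn^{-1/2}$ of \eqref{eqn:progeny asymp 2}.

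The main obstacle is the identification $\lim_{x\to\infty}G_x(l)=cl$: although its two ingredients — that a centred random walk on $\mathbb N$ killed at $\{x\le 0\}$ has a one-dimensional cone of positive harmonic functions, and that its Green function converges to the associated harmonic function near the boundary — are classical (work of Doney and others on killed random walks and their Martin boundaries; see also Vatutin--Wachtel and Caravenna--Chaumont for the closely related local limit theorems), they are not elementary, and some care is needed to invoke them, albeit under hypotheses that hold comfortably here since the increments of $Z$ have exponential tails. Everything else is routine: the decoupling at time $n-1$, the dominated-convergence step, the explicit verification that $l\mapsto l$ is harmonic, and the no-escape-of-mass bound. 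An alternative that avoids conditioning on $Z_{n-1}$ is to work directly with $\mathbb P\big((Z_{\tau-1},-Z_\tau)=(l,k)\mid\tau\ge n\big)=\overleftarrow{\mu}(l+k)\,a_n(l)/\mathbb P(\tau\ge n)$, where $a_n(l)=\sum_{m\ge n-1}\mathbb P(Z_i\ge 1\ \forall i\le m,\ Z_m=l)$, and to establish the ratio limit $a_n(l)/a_n(1)\to l$ from the same local limit theorems together with the same linearity observation.
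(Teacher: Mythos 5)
Your overall architecture is sound and in fact parallels the paper's: condition via the Markov property once the walk is far from the boundary (the paper does this at time $n$ using Iglehart's theorem, you at $n-1$), reduce to the $x\to\infty$ limit of the law of the undershoot/overshoot started from $x$, and extract linearity in $l$ from the skip-free structure. Your last-jump factorisation $\psi_{l,k}(x)=\overleftarrow{\mu}(l+k)\,G_x(l)$ is correct and is a cleaner starting point than the paper's, which instead invokes Doney--Kyprianou's Theorem 4 to express the undershoot/overshoot law through ladder quantities of $\overleftarrow{Z}$ and then applies the renewal theorem for its ascending ladder heights. However, your justification of the crucial step $\lim_{x\to\infty}G_x(l)=cl$ contains a genuine error. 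You assert that the limit is a positive harmonic function for the walk $Z$ killed at $\{x\le 0\}$ and that this harmonic function is linear ``since from any level $\ge 2$ the walk cannot enter $\{x\le 0\}$ in one step.'' This is false for $Z$: its increments lie in $\{1,-1,-2,\dots\}$, so from any level $l$ it can be killed in a single (large downward) jump; a direct computation using centredness shows $\mathbb{E}_l[Z_1\mathbbm{1}_{\{Z_1\ge 1\}}]=l+\sum_{j\ge l}(j-l)\mu(-j)>l$, so $l\mapsto l$ is strictly subharmonic for $Z$ killed at $\{x\le 0\}$, and the positive harmonic function of that walk is the renewal function of its descending ladder heights, which is not linear. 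What is true --- and what your displayed identity $\tilde h(l)=\sum_{l'\ge 1}\tilde h(l')\mu(l-l')$ actually expresses, since $\mu(l-l')$ is the probability of a step from $l'$ to $l$ --- is harmonicity for the \emph{reversed} walk $\overleftarrow{Z}$ (increments in $\{-1,1,2,\dots\}$, hence downward skip-free) killed at $\{x\le 0\}$; and that is the relevant walk, because $G_x(l)$, viewed as a function of its second argument, is harmonic for the adjoint chain (equivalently, by time reversal, $G_x(l)$ is the expected number of visits to $x$ by $\overleftarrow{Z}$ started from $l$ and killed at $\{x\le 0\}$). So the computation can be salvaged, but as written the reasoning identifies the wrong walk and rests on a false claim.

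A second, smaller issue is that you invoke as ``standard'' the convergence of the unnormalised Green function to the harmonic function as the starting point recedes from the boundary. Martin-boundary theory gives convergence of ratios $G_x(l)/G_x(1)$; the existence and positivity of the unnormalised limit is exactly where a renewal argument is needed, and this is how the paper proceeds: \autocite[Theorem 4]{DK} applied to $\overleftarrow{Z}$ reduces the quantity to a sum over $y=0,\dots,l-1$ of the expected number of times $\overleftarrow{Z}$ sits at a running maximum equal to $z-1-y$, which converges by the renewal theorem for its (finite-mean) ascending ladder heights to a constant independent of $y$, while downward skip-freeness and oscillation make the accompanying hitting probabilities equal to $1$; the factor $l$ then comes from the $l$ equal terms of the sum. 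If you replace your harmonicity paragraph by either this argument or a correctly attributed time-reversal/renewal argument for $\overleftarrow{Z}$, your proof goes through; the remaining steps (the decoupling at time $n-1$, dominated convergence, finiteness of the normalisation from the exponential tails of $(p_m)$, and tightness) are fine.
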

\begin{proof} We first note that, by \autocite[Proposition 2.1]{Iglehart} (see also \autocite{Bolt,DW}), for all $x\geq 0$,
\[\mathbb{P}\left(\frac{Z_n}{\sigma\sqrt{n}}\leq x\:\vline\:\tau\geq n\right)\rightarrow 1-e^{-\frac{x^2}{2}},\]
where $\sigma^2$ is the variance of the increment distribution of $Z$. In particular, it holds that, conditionally on $\tau\geq n$, $Z_n$ converges in distribution to $\infty$. Applying the Markov property at time $n$, it is a straightforward consequence of this observation that: for each $(l,k)\in \mathbb{N}\times\N_0$,
\begin{equation}\label{zlim}
\lim_{n\rightarrow\infty}\mathbb{P}\left((Z_{\tau-1},-Z_\tau)=(l,k)\:\vline\:\tau\geq n\right)=\lim_{z\rightarrow\infty}\mathbb{P}\left((Z_{\tau-1},-Z_\tau)=(l,k)\:\vline\:Z_0=z\right),
\end{equation}
if the limit on the right-hand side exists. To check that the right-hand side of \eqref{zlim} is indeed well-defined, we will apply \autocite[Theorem 4]{DK} to the random walk $(z-Z_m)_{m \geq 0}$ (started from 0), which has the same increment distribution as $\overleftarrow{Z}$. In particular, the quantities $(Z_{\tau-1}-1,-Z_\tau+1)$ under the conditioning $Z_0=z$ are distributed as the undershoot and overshoot of $z-1$ by $\overleftarrow{Z}$ under the conditioning $\Zr_0=0$, as defined in \autocite{DK}. Hence we obtain from \autocite[Theorem 4]{DK} that, for $z\geq l$,
\begin{align*}
\lefteqn{\mathbb{P}\left((Z_{\tau-1},-Z_\tau)=(l,k)\:\vline\:Z_0=z\right)}\\
&=\sum_{y=0}^{l-1}\left[\mathbb{E}_0\left[\left|\left\{n:\Zr_n = \max_{m\leq n}\overleftarrow{Z}_m=z-1-y\right\}\right|\right] \mathbb{P}_0\left(\exists n: \min_{m\leq n}\overleftarrow{Z}_m=y-l+1\right) \overleftarrow{\mu}\left(l+k\right)\right].
\end{align*}
Now, since the increments of the random walk $\overleftarrow{Z}$ have second moments, the first strict ascending ladder height for $\overleftarrow{Z}$ has finite expectation (by \autocite[(4a)]{Don}), and so we can apply the renewal theorem to the set of ascending ladder values, and the fact that the number of returns of $\overleftarrow{Z}$ to 0 before it hits $\N$ is geometric, supported on $\N_0$, with parameter $\mathbb{P}(\overleftarrow{Z}\mbox{ does not return to 0 before hitting }\mathbb{N})\in (0,1)$, to deduce that, for any $y \in \{0 ,\ldots, l-1\}$,
\begin{eqnarray*}
\lefteqn{\lim_{z\rightarrow\infty}\mathbb{E}_0\left[\left|\left\{n:\:\Zr_n = \max_{m\leq n}\overleftarrow{Z}_m=z-1-y\right\}\right|\right]}\\
&=&\lim_{z\rightarrow\infty}\mathbb{P}_0\left(\max_{m\leq n}\overleftarrow{Z}_m=z-1-y\mbox{ for some }n\right)\\
&&\qquad\times \mathbb{P}_0\left(\overleftarrow{Z}\mbox{ does not return to 0 before hitting }\mathbb{N}\right)^{-1}\\
&=&\mathbb{E}_0\left[\overleftarrow{Z}_{\inf\left\{n:\:\overleftarrow{Z}_n>0\right\}}\right]^{-1}\mathbb{P}_0\left(\overleftarrow{Z}\mbox{ does not return to 0 before hitting }\mathbb{N}\right)^{-1},
\end{eqnarray*}
with the final expression taking a value in $(0,\infty)$ and in particular not depending on $l$. Moreover, since $\overleftarrow{Z}$ is downwards-skip-free and oscillating, it holds that
\[\mathbb{P}\left(\min_{m\leq n}\overleftarrow{Z}_m=y-l+1\mbox{ for some }n\right)=1,\qquad y\leq l-1.\]
Hence we deduce that
\[\lim_{z\rightarrow\infty}\mathbb{P}\left((Z_{\tau-1},-Z_\tau)=(l,k)\:\vline\:Z_0=z\right)=Cl\overleftarrow{\mu}\left(l+k\right).\]
We obtain the alternative expression given in the statement of the lemma by noting that Proposition \ref{prop:C is centred RW} gives that $\overleftarrow{\mu}(l+k)={\mu}(-l-k)\propto p_{l+k}$.
\end{proof}

We next give the description of $\CC_Z$ that is crucial for our proof of Theorem \ref{thm:main scaling lim}.

\begin{lem}\label{lem:exc to tree}
Under the conditioning $\{\tau \geq n, Z_{\tau} = -k,  Z_{\tau - 1}=l\}$, $\CC_Z$ consists of a decorated `root loop' of size $k+l+1$, divided into $l$ consecutive black vertices and $k+1$ consecutive white vertices. Moreover, attached to each of the black vertices is a decorated two-type Galton-Watson tree with offspring distributions as in Proposition \ref{prop:tree distribution}, conditioned so that the sum of the total number of vertices of the underlying trees (including their roots) is at least $n$.
\end{lem}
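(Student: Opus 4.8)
The plan is to obtain the two halves of the statement separately: the ``root loop'' is essentially a direct reading of \cref{def:dec tree construction of cluster}, while the description of what is attached to it comes from a last-passage/time-reversal decomposition of the walk $Z$.

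First I would read off the root loop. The conditioning on $\{Z_\tau=-k,\,Z_{\tau-1}=l\}$ fixes the final increment of $Z$ to be $-(k+l)$, so by \cref{def:dec tree construction of cluster} the root face of the extended looptree $L_Z^*$ has boundary length one more than the size of this jump, namely $k+l+1$; moreover, substituting $Z_\tau=-k$ into the boundary colouring prescribed in step~4 of that definition (the ``$k$-gon'' there now being the $(k+l+1)$-gon) gives a root vertex that is black, then $(k+l+1)+Z_\tau-2=l-1$ further black vertices, then $1-Z_\tau=k+1$ white vertices, read clockwise. This is exactly the stated splitting into $l$ consecutive black and $k+1$ consecutive white vertices, and the ``decorated root loop'' is this loop together with the percolated Boltzmann triangulation of the $(k+l+1)$-gon inserted into it with this colouring; the structure described is precisely the one of which $\CC_Z$ is the black connected component of the root, as in step~5 of \cref{def:dec tree construction of cluster}.

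Next I would analyse the part of $L_Z^*$ hanging off the $l$ black vertices of the root loop. Under the conditioning the path $(Z_i)_{0\le i\le\tau-1}$ runs from $1$ to $l$ staying positive, and since $Z$ is skip-free upwards it has a last-passage decomposition: writing $\psi_j$ for the last time $Z$ visits level $j$ (so $\psi_1<\dots<\psi_l=\tau-1$), the path is the concatenation of $l$ excursions $E_1,\dots,E_l$ separated by $l-1$ single upward steps, where $E_j$ is the (possibly trivial) excursion of $Z$ strictly above level $j-1$ lying between $\psi_{j-1}$ and $\psi_j$. Reversing time, $\Zr$ (increment law $\overleftarrow{\mu}(i)=\mu(-i)$ as in \eqref{eqn:mu def general case background reversed}) starts at $l$ and, being skip-free downwards, decreases through $l-1,l-2,\dots$; applying the strong Markov property successively at its first-passage times to these levels shows the corresponding reversed pieces are independent and each distributed as a $\mur$-walk started at $1$ and run until it first hits the non-positive integers. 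By \cref{prop:tree offspring RW} each such piece is the coding excursion of a two-type Galton--Watson tree with offspring laws $\mu_\bullet,\mu_\circ$, and \cref{prop:tree distribution} identifies these with the laws in the statement; hence the trees $T^{(1)},\dots,T^{(l)}$ coded by $E_1,\dots,E_l$ are i.i.d.\ copies of this Galton--Watson tree, with roots identified (one each) with the $l$ black vertices of the root loop. Since the non-root faces of $L_Z^*$ are exactly the faces of $\Loop(T^{(1)}),\dots,\Loop(T^{(l)})$ --- the $k+1$ white vertices of the root loop being terminal (ramp) vertices with nothing attached --- and these faces receive, in the construction of $\CC_Z$, independent percolated Boltzmann triangulations with black boundary, the decorated sub-looptree hanging off the $j$-th black vertex is exactly the decorated two-type tree $\Dec(T^{(j)})$ of \cref{sctn:dec trees} built from the family $(G_i)_{i\ge1}$ verified in \cref{prop:verify assn}.

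Finally, for the conditioning $\{\tau\ge n\}$ I would use \eqref{eqn:progeny asymp}, which gives that the number of steps of $E_j$ equals $|T^{(j)}|-1$; combining this with the $l-1$ extra upward steps in the decomposition yields $\sum_{j=1}^l(|T^{(j)}|-1)+(l-1)=\tau-1$, i.e.\ $\tau=\sum_{j=1}^l|T^{(j)}|$, so that $\{\tau\ge n\}$ is precisely $\{\sum_{j=1}^l|T^{(j)}|\ge n\}$ --- the stated conditioning on the underlying trees. The step I expect to require the most care is the bookkeeping reconciling the forward-walk looptree coding (which builds $L_Z^*$ from the extended excursion $Z^*$) with the reversed-walk Galton--Watson coding of \cref{prop:tree offspring RW}: one must check that the sub-looptrees really are looptrees of i.i.d.\ Galton--Watson trees glued at exactly the $l$ black root-loop vertices (and that no further sub-looptrees hang off the $k+1$ white ones), that the inserted triangulations match the $\Dec(\cdot)$ construction face by face, and that it is $\{\tau\ge n\}$ --- rather than, say, a conditioning on the total number of vertices of $\CC_Z$, which differs by the $O(k+l)$ contribution of the root loop --- that translates cleanly. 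The remaining assertions are immediate from \cref{def:dec tree construction of cluster} and Propositions \ref{prop:tree offspring RW}, \ref{prop:tree distribution} and \ref{prop:verify assn}.
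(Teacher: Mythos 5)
Your proposal is correct and follows essentially the same route as the paper: the decomposition of the conditioned path into $l$ excursions (your last-passage decomposition of $Z$ is exactly the first-passage decomposition of the reversed walk $\overleftarrow{Z}$ used in the paper), identification of each piece with a two-type Galton--Watson tree via \cref{prop:tree offspring RW} and \cref{prop:tree distribution}, the identity $\tau=\sum_j|T^{(j)}|$ to translate $\{\tau\geq n\}$ into the conditioning on total tree size, and the reading of the root loop from \cref{def:dec tree construction of cluster}. The only presentational difference is that the paper justifies the reversal/independence step by an explicit factorisation of the conditional path probabilities into products of $\overleftarrow{\mu}$-increments, whereas you invoke time-reversal duality plus the strong Markov property, which amounts to the same computation given that you rightly defer the $\{\tau\geq n\}$ conditioning to the end.
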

\begin{proof} We will prove a version of the result for the excursion $(Z_m)_{m=0}^{\tau}$ which translates easily to the statement for $\CC_Z$. First, let $(\gamma_m)_{m=0}^{|\gamma|}$ be a possible realisation of $(Z_m)_{m=0}^{\tau}$ on the set $\{\tau\geq n,\:Z_{\tau-1}=l,\:Z_{\tau}=-k\}$ for some $n,l\geq 1$, $k\geq 0$ (and assuming $Z_0=1$), and extend $\gamma$ by setting $\gamma_{-1}=0$. Moreover, we introduce the time-reversal $\overleftarrow{\gamma}$ by setting
\[\overleftarrow{\gamma}_m=\gamma_{|\gamma|-1-m},\qquad m=0,\dots,|\gamma|.\]
It is then the case that $\overleftarrow{\gamma}$ is a possible realisation of the centred random walk of \cref{sctn:coding}, $\overleftarrow{Z}$, started from $\overleftarrow{Z}_0=l$ and run until $\sigma:=\inf\{m\geq0:\:\overleftarrow{Z}_m=0\}$. Since all the downwards jumps of $\overleftarrow{Z}$ are of unit size, we can decompose $\overleftarrow{\gamma}$ into excursions $\overleftarrow{\gamma}^{(i)}=(\overleftarrow{\gamma}^{(i)}_m)_{m=0}^{\Gamma_i}$, $i=1,\dots,l$, where
\[\overleftarrow{\gamma}^{(i)}_m=\overleftarrow{\gamma}_{\Gamma_1+\Gamma_2+\dots+\Gamma_{i-1}+m},\qquad m=0,\dots,\Gamma_i,\]
with $\Gamma_i$, $i=1,\dots,l$, being defined inductively by setting
\[\Gamma_1+\dots+\Gamma_i=\inf\{m\geq 0:\:\overleftarrow{Z}_m=l-i\}.\]
In particular, we have that $\Gamma_1+\dots +\Gamma_l=|\gamma|$. We note that, for each $i$, $\overleftarrow{\gamma}^{(i)}-(l-i)$ is a possible realisation of $\overleftarrow{Z}$, started from $\overleftarrow{Z}_0=1$ and run until $\sigma:=\inf\{m\geq0:\:\overleftarrow{Z}_m=0\}$. With this notation, for fixed $n,l\geq 1$, $k\geq 0$, we have, using the proportionality symbol $\propto$ with respect to the possible $\gamma$ (the collection of which is the same as the set of possible $\overleftarrow{\gamma}^{(i)}$, $i=1,\dots,l$, with $\Gamma_1+\dots+\Gamma_l\geq n$),
\begin{eqnarray*}
\lefteqn{\mathbb{P}\left((Z_m)_{m=0}^{\tau-1}=(\gamma_m)_{m=0}^{|\gamma|-1}\:\vline\:\tau\geq n,\:Z_{\tau-1}=l,\:Z_{\tau}=-k\right)}\\
&\propto&\prod_{m=0}^{|\gamma|-1}\mu\left(\gamma_m-\gamma_{m-1}\right)\\
&=&\prod_{i=1}^{l}\prod_{m=1}^{\Gamma_i}\overleftarrow{\mu}\left(\gamma^{(i)}_m-\gamma^{(i)}_{m-1}\right)\\
&\propto&\prod_{i=1}^{l}\mathbb{P}\left((\overleftarrow{Z}_m)_{m=0}^{\sigma}=\gamma^{(i)}-(l-i)\:\vline\:\overleftarrow{Z}_0=1\right),
\end{eqnarray*}
where we use that the increments from $\gamma_{-1}=0$ to $\gamma_0=1$ and from $\gamma_{|\gamma|-1}=l$ to $\gamma_{|\gamma|}=-k$ are fixed. In particular, this tells us that, conditionally on $\{\tau\geq n,\:Z_{\tau-1}=l,\:Z_{\tau}=-k\}$, the time-reversal of $(Z_m)_{m=0}^{\tau-1}$ is obtained by concatenating $l$ independent copies of $(\overleftarrow{Z}_m)_{m=0}^{\sigma}$ with $\overleftarrow{Z}_0=1$, conditioned so that the total length of these excursions is at least $n$, and then removing the last step. The result readily follows from this equivalence of conditional laws by considering the description of $\CC_Z$ given in Definition \ref{def:dec tree construction of cluster} and the looptree construction of Section \ref{sctn:coding}.
\end{proof}

As a consequence, we obtain the basic properties set out in the following proposition. To state the result, we will continue to write $(\Gamma_i)_{i=1}^{Z_{\tau-1}}$ for the lengths of the excursions of $\overleftarrow{Z}$, as defined in the previous proof. We will denote by $\Gamma_{\max}$ the maximum of these, and $i_{\max}$ the index that attains this (chosen arbitrarily from the possible candidates if there is a tie). Let us also introduce $(T_i)_{i=1}^{Z_{\tau-1}}$ for the collection of corresponding decorated two-type trees (i.e. the tree $T_i$ is coded by the $i^{th}$ excursion as illustrated in \cref{fig:contour coding looptree} and decorated with the family $((G_i,d_i,\nu_i,\ell_i))_{i\geq 1}$ appearing in \cref{prop:verify assn}), write $d_{T_i}$ for their graph metric, and set $|T_i|$ to be their size (i.e.\ number of vertices). Similarly, denote by $L_{\rho}$ the decorated root loop, $d_{L_{\rho}}$ its graph metric and $|L_{\rho}|$ its size.

\begin{cor}\label{cor:one big tree}
Under the conditioning $\{\tau \geq \beta n\}$, the following events occur with high probability as $n \to \infty$:
\begin{enumerate}[(i)]
\item precisely one of the Galton-Watson trees in the decomposition above will have size $\tau - o(n)$, and all of the other decorated Galton-Watson trees will have diameter $o(n^{1/2})$ and total volume $o({n})$, in particular, for any divergent sequence $(\alpha(n))_{n\geq 1}$,
\[\prcond{\tau-\Gamma_{\max}=\sum_{i\in\{1,\dots,Z_{\tau-1}\}\backslash\{i_{\max}\}}|T_i|\geq \alpha(n)}{\tau\geq \beta n}{}\rightarrow 0,\]
\[\prcond{\max_{i\in\{1,\dots,Z_{\tau-1}\}\backslash\{i_{\max}\}}\diam (T_i,d_{T_i})\geq \alpha(n)}{\tau\geq \beta n}{}\rightarrow 0;\]
\item the decorated root loop will have diameter $o(n^{1/2})$ and total volume $o({n})$, in particular, for any divergent sequence $(\alpha(n))_{n\geq 1}$,
\[\prcond{|L_{\rho}|\geq \alpha(n)}{\tau\geq \beta n}{}\rightarrow 0,\]
\[\prcond{\diam (L_{\rho},d_{L_{\rho}})\geq \alpha(n)}{\tau\geq \beta n}{}\rightarrow 0.\]
\end{enumerate}
\end{cor}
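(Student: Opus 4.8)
The plan is to reduce to a fixed shape of the terminal data and then invoke the classical ``single big jump'' behaviour of i.i.d.\ heavy-tailed sums. First I would use \cref{lem:RW error small}: conditionally on $\{\tau\geq\beta n\}$ the pair $(Z_{\tau-1},-Z_\tau)$ is tight, so given $\delta>0$ there is a constant $M$ with $\prcond{Z_{\tau-1}\leq M,\ -Z_\tau\leq M}{\tau\geq\beta n}{}\geq 1-\delta$ for all large $n$. Splitting each event $\mathcal{E}_n$ in the statement according to the value of $(Z_{\tau-1},Z_\tau)$, it then suffices to show $\prcond{\mathcal{E}_n}{\tau\geq\beta n,\ Z_{\tau-1}=l,\ Z_\tau=-k}{}\to 0$ for each fixed $(l,k)\in\mathbb{N}\times\N_0$ (the remaining sum being finite, with bounded weights), and finally let $\delta\downarrow 0$.

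With $(l,k)$ fixed, I would read off from \cref{lem:exc to tree} and the excursion decomposition in its proof that, under this conditioning, $\CC_Z$ is the independent union of a root loop $L_\rho$ --- whose combinatorial structure is a deterministic function of $(l,k)$ and whose decoration is independent of everything else --- together with decorated two-type Galton--Watson trees $T_1,\dots,T_l$, coded by \emph{i.i.d.}\ excursions of lengths $\Gamma_i=|T_i|$ and conditioned \emph{only} on $\sum_{i=1}^l|T_i|=\tau\geq\beta n$. Part (ii) is then immediate: $|L_\rho|$ is dominated by a fixed random variable built from $(l,k)$ and the volumes of $O(l+k)$ percolated Boltzmann polygons of boundary length at most $l+k+1$, which are a.s.\ finite with finite third moments by \cref{prop:verify assn}; hence $|L_\rho|$, and so also $\diam(L_\rho,d_{L_\rho})\leq|L_\rho|$, is tight, and the conditional probabilities in (ii) vanish for any divergent $\alpha(n)$.

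For part (i), write $S=\sum_{i=1}^l|T_i|$, $\Gamma_{\max}=\max_i|T_i|$ and $R=S-\Gamma_{\max}$; by \eqref{eqn:progeny asymp} the $|T_i|$ are i.i.d.\ with $\pr{|T_1|\geq x}\sim cx^{-1/2}$, so $\pr{S\geq\beta n}$ is of order $n^{-1/2}$ (bounded below by $\pr{|T_1|\geq\beta n}$ and above by $\sum_i\pr{|T_i|\geq\beta n/l}$). For the size bound I would split $\{S\geq\beta n,\ R\geq\alpha(n)\}$ according to whether $R\geq\beta n/2$: if so, two of the $|T_i|$ exceed $\beta n/(2(l-1))$ and this has probability $O(n^{-1})$; if not, then $\Gamma_{\max}>\beta n/2$, so the event lies in $\bigcup_{j}\{|T_j|>\beta n/2,\ \sum_{i\neq j}|T_i|\geq\alpha(n)\}$, of probability at most $l\,\pr{|T_1|>\beta n/2}\,\pr{\sum_{i=1}^{l-1}|T_i|\geq\alpha(n)}=O(n^{-1/2})\cdot o(1)$ since $\sum_{i=1}^{l-1}|T_i|$ is a.s.\ finite. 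Dividing by $\pr{S\geq\beta n}$ gives the first bound of (i), and ``size $\tau-o(n)$'' is the same statement because $\Gamma_{\max}=\tau-R$. For the diameter bound, union-bounding over the identity of $i_{\max}$ and of the index $j'\neq i_{\max}$ witnessing $\diam(T_{j'},d_{T_{j'}})\geq\alpha(n)$, and using that $\{i_{\max}=j,\ S\geq\beta n\}\subseteq\{|T_j|\geq\beta n/l\}$ together with independence of distinct $T_i$,
\[\pr{\max_{i\neq i_{\max}}\diam(T_i,d_{T_i})\geq\alpha(n),\ S\geq\beta n}\leq\sum_{j\neq j'}\pr{|T_j|\geq\beta n/l}\,\pr{\diam(T_{j'},d_{T_{j'}})\geq\alpha(n)};\]
here the first factor is $O(n^{-1/2})$ and the second is $o(1)$, because an unconditioned decorated critical two-type Galton--Watson tree is a.s.\ finite (the underlying critical tree is a.s.\ finite, and the inserted percolated Boltzmann polygons are a.s.\ finite), so $\diam(T_{j'},d_{T_{j'}})<\infty$ a.s.\ while $\alpha(n)\to\infty$. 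Dividing again by $\pr{S\geq\beta n}$ finishes it; the ``total volume $o(n)$'' assertions follow the same way, controlling $\sum_{i\neq i_{\max}}$ of the decorated volumes via $R=o(n)$ together with a union bound over inserted graphs as in \cref{lem:max diam} using \eqref{eqn:inserted graphs mean variance}.

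The main obstacle is not a delicate estimate but getting the conditional structure exactly right: one must extract from \cref{lem:exc to tree} that, once $(l,k)$ is fixed, the $l$ tree-coding excursions are i.i.d.\ and conditioned \emph{only} on their total length, and independent of the root loop's decoration, and one must make the reduction to fixed $(l,k)$ uniform in $n$ via \cref{lem:RW error small}. Once that is in place, everything collapses to the standard single-big-jump principle for i.i.d.\ sums with tail $\sim cx^{-1/2}$ and to the a.s.\ finiteness of the unconditioned decorated trees.
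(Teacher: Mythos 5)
Your proposal is correct and follows essentially the same route as the paper's proof: reduce to a fixed terminal pair $(l,k)$ via the tightness in \cref{lem:RW error small}, invoke the i.i.d.\ excursion decomposition from \cref{lem:exc to tree}, and combine the tail estimate \eqref{eqn:progeny asymp 2} (single-big-jump) with the almost-sure finiteness of unconditioned decorated trees, dividing by $\pr{\tau\geq\beta n\mid (Z_{\tau-1},-Z_\tau)=(l,k)}\geq cn^{-1/2}$ to remove the conditioning. The only difference is bookkeeping --- you remove the conditioning by direct event inclusions and independence of disjoint tree collections, where the paper decomposes over the lengths of the small excursions and uses exchangeability --- and the minor notational conflation of excursion lengths with decorated tree sizes is already present in the paper's own statement, so it does not constitute a gap.
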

\begin{proof}
We use the same notation as in the proof of \cref{lem:exc to tree}.
\begin{enumerate}[(i)]
\item We start by showing the following weaker fact: precisely one of the Galton-Watson trees in the decomposition above will have size $\tau - o(n)$, in particular,
\begin{equation}\label{eqn:5.4i}
\prcond{\tau-\Gamma_{\max}\geq n^{5/8}}{\tau\geq \beta n}{}\rightarrow 0;
\end{equation}

Since the pair $(Z_{\tau - 1},-Z_{\tau})$ converges in distribution to a pair of finite random variables under the relevant conditioning (by Lemma \ref{lem:RW error small}),
    \[\prcond{Z_{\tau - 1}-Z_{\tau}> \log n}{\tau \geq \beta n}{}\rightarrow 0.\]
    Moreover, writing $\Gamma_{1,l}:=\Gamma_1+\dots+\Gamma_l$, for any $l\geq 1$ and $k\geq 0$, it holds that there exists $C<\infty$ such that
    \begin{eqnarray*}
    \lefteqn{\prcond{\tau-\Gamma_{\max}\geq n^{5/8}}{\tau\geq \beta n,(Z_{\tau-1},-Z_{\tau})=(l,k)}{}}\\
    &=&\frac{\prcond{\Gamma_{1,l}\geq \beta n,\: \Gamma_{1,l}-\Gamma_{\max}\geq n^{5/8}}{(Z_{\tau-1},-Z_{\tau})=(l,k)}{}}{\prcond{\Gamma_{1,l}\geq \beta n}{(Z_{\tau-1},-Z_{\tau})=(l,k)}{}}\\
    &\leq &\frac{\prcond{\max_{i\in\{1,\dots,l\}\backslash\{i_{\max}\}} \Gamma_{i}\geq n^{5/8}/l}{(Z_{\tau-1},-Z_{\tau})=(l,k)}{}}{\prcond{\Gamma_{1}\geq \beta n}{(Z_{\tau-1},-Z_{\tau})=(l,k)}{}}\\
    &\leq & C l^3n^{-1/8},
    \end{eqnarray*}
    where to deduce the final inequality we have applied a union bound (based on the fact that for the event in the numerator to be true, we require two excursions to be of length at least $n^{5/8}/l$) and \eqref{eqn:progeny asymp 2}. (Note that, conditionally only on the event $\{(Z_{\tau-1},-Z_{\tau})=(l,k)\}$, the excursions associated with lengths $(\Gamma_i)_{i=1}^l$ are independent.) Hence
    \begin{eqnarray*}
    \lefteqn{\prcond{\tau-\Gamma_{\max}\geq n^{5/8}}{\tau\geq \beta n}{}}\\
    &= &o(1)+\sup_{l,k:\:l+k\leq \log n}\prcond{\tau-\Gamma_{\max}\geq n^{5/8}}{\tau\geq \beta n,(Z_{\tau-1},-Z_{\tau})=(l,k)}{}\\
    &\leq &o(1)+C (\log n)^3n^{-1/8},
    \end{eqnarray*}
    which converges to 0 as $n\rightarrow\infty$.

We now use \eqref{eqn:5.4i} to establish (i). For any $\Lambda>0$ we have that
\begin{eqnarray*}
    \lefteqn{\prcond{\sum_{i\in\{1,\dots,Z_{\tau-1}\}\backslash\{i_{\max}\}}|T_i|\geq \alpha(n)}{\tau\geq \beta n}{}}\\
    &\leq &\prcond{Z_{\tau - 1}-Z_{\tau}> \Lambda}{\tau \geq \beta n}{}\\
    && + \sum_{l,k:\:l+k\leq \Lambda} \prcond{\max_{i\in\{1,\dots,l\}\backslash\{i_{\max}\}}|T_i|\geq \alpha(n)/l}{\tau\geq \beta n,\:(Z_{\tau-1},-Z_{\tau})=(l,k)}{}.
    \end{eqnarray*}
    Applying Lemma \ref{lem:RW error small}, the first term in the upper bound converges to 0 as $n\rightarrow\infty$ and then $\Lambda\rightarrow\infty$. Hence it will suffice to check that each term of the sum converges to 0 for each fixed $(l,k)$. Now, as in the proof of \eqref{eqn:5.4i}, we have that
\[\prcond{\max_{i\in\{1,\dots,l\}\backslash\{i_{\max}\}} \Gamma_{i}\geq n^{5/8}}{\tau\geq \beta n,(Z_{\tau-1},-Z_{\tau})=(l,k)}{}\leq Cl^3n^{-1/8},\]
    and thus it will be enough to show the decay to 0 of
    \[ \prcond{\max_{i\in\{1,\dots,l\}\backslash\{i_{\max}\}}|T_i|\geq \alpha(n)/l,\:\max_{i\in\{1,\dots,l\}\backslash\{i_{\max}\}} \Gamma_{i}\leq n^{5/8}}{\tau\geq \beta n,\:(Z_{\tau-1},-Z_{\tau})=(l,k)}{}.\]
   Decomposing into the possible values of $(\Gamma_i)$ and using the exchangeability of the excursions, this is bounded above by
   \begin{eqnarray*}
   \lefteqn{l\sum_{\lambda_1,\dots,\lambda_{l-1}=1}^{n^{5/8}}\prcond{\max_{i\in\{1,\dots,l-1\}}|T_i|\geq \alpha(n)/l,\:(\Gamma_i)_{i=1}^{l-1}=(\lambda_i)_{i=1}^{l-1}}{\tau\geq \beta n,\:(Z_{\tau-1},-Z_{\tau})=(l,k)}{}}\\
   &=&l\sum_{\lambda_1,\dots,\lambda_{l-1}=1}^{n^{5/8}}\frac{\prcond{\max_{i\in\{1,\dots,l-1\}}|T_i|\geq \alpha(n)/l,\:(\Gamma_i)_{i=1}^{l-1}=(\lambda_i)_{i=1}^{l-1},\:\Gamma_l\geq \lambda_l}{(Z_{\tau-1},-Z_{\tau})=(l,k)}{}}{\prcond{\Gamma_{1,l}\geq \beta n}{(Z_{\tau-1},-Z_{\tau})=(l,k)}{}},
   \end{eqnarray*}
   where we define $\lambda_l:=\beta n-\sum_{i=1}^{l-1}\lambda_i\geq c n$ for some $c>0$ for all large enough $n$ (depending only on $l$). Applying the conditional independence of the excursions and \eqref{eqn:progeny asymp 2}, this is in turn bounded by
   \begin{eqnarray*}
   \lefteqn{Cl\sum_{\lambda_1,\dots,\lambda_{l-1}=1}^{n^{5/8}}\prcond{\max_{i\in\{1,\dots,l-1\}}|T_i|\geq \alpha(n)/l,\:(\Gamma_i)_{i=1}^{l-1}=(\lambda_i)_{i=1}^{l-1}}{(Z_{\tau-1},-Z_{\tau})=(l,k)}{}}\\
   &\leq &Cl\prcond{\max_{i\in\{1,\dots,l-1\}}|T_i|\geq \alpha(n)/l}{(Z_{\tau-1},-Z_{\tau})=(l,k)}{}.\hspace{100pt}
   \end{eqnarray*}
    Now, the law of the finite random variable $\max_{i\in\{1,\dots,l-1\}}|T_i|$ and the conditioning do not depend on $n$. Hence the above expression converges to 0 as $n\rightarrow\infty$, which completes the proof of the first claim of (ii). The proof of the second claim also follows since $\diam (T_i,d_{T_i}) \leq |T_i|$.
\item Again using that $\prcond{Z_{\tau - 1}-Z_{\tau}> \Lambda}{\tau \geq \beta n}{}$ converges to 0 as $n\rightarrow\infty$ and then $\Lambda\rightarrow\infty$, it will suffice to check that the result when conditioning on the event $\{\tau\geq \beta n,\:(Z_{\tau-1},-Z_{\tau})=(l,k)\}$ for each fixed $(l,k)$. On the latter event, the decoration giving $L_\rho$ has a Boltzmann law $\Pb_{k+l+1,q}$. In particular, under each of these laws, it is a finite graph, and the result readily follows.
\end{enumerate}
\end{proof}

Recall that $\CC_Z$ is the cluster constructed from $Z$ as defined in \cref{def:dec tree construction of cluster}. We root it at the root vertex of the original map. In what follows we let $T$ be the largest of the two-type Galton-Watson trees in the decomposition above, and $\CC^{T}_Z$ denote its decorated (with the graph family in \cref{prop:verify assn}) version, rooted at the vertex joining this decorated tree to the root loop. We also let $\CC_{Z,n}$ and $\CC^{T}_{Z,n}$ denote their appropriate rescaled versions,
\begin{align*}
\CC_{Z,n} = \left( \CC_Z, \frac{\sigma}{\chi\sqrt{\beta n}}d_n, \frac{1}{n}\nu_n, \rho_n \right), \qquad \CC^T_{Z,n} = \left( \CC_Z^T, \frac{\sigma}{\chi\sqrt{\beta n}}d^T_n, \frac{1}{n}\nu^T_n, \rho_n^T \right),
\end{align*}\label{fa CTZ}
which we will sample under the conditioning $\{\tau \geq \beta n\}$ (recall the definitions of $\beta$, $\chi$, and $\sigma$ from \cref{def:scaling consts}). Here $d_n$ and $d_n^T$ denote the graph metric on each space, $\nu_n$ and $\nu_n^T$ denote the counting measure on vertices, and $\rho_n$ and $\rho_n^T$ denote their respective roots.

\begin{prop}\label{prop:dGHP small under coupling}
For any $\varepsilon$, it holds that
\[\lim_{n\rightarrow\infty}\mathbb{P}\left(\dGHP{\CC_{Z,n}, \CC^T_{Z,n}} >\varepsilon\:\vline\:\tau > \beta n\right)=0.\]
\end{prop}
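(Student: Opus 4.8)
\textit{Proof plan.} The plan is to build an explicit low-distortion correspondence between $\CC_Z$ and $\CC^T_Z$ and to read off the required bound from \cref{cor:one big tree}, which already contains all of the probabilistic content. We work conditionally on $\{\tau>\beta n\}$ throughout, and we couple $\CC^T_Z$ to $\CC_Z$ in the natural way: we realise $\CC^T_Z$ as the sub-structure of $\CC_Z$ obtained by decorating only the maximal two-type Galton--Watson tree $T$ in the decomposition of \cref{lem:exc to tree} using exactly the same Boltzmann-map insertions as those used to construct $\CC_Z$. With this coupling, \cref{lem:exc to tree} gives that $\CC_Z$ is the union of the decorated root loop $L_\rho$, the decorated non-dominant trees $(T_i)_{i\neq i_{\max}}$, and $\CC^T_Z$, where $\CC^T_Z$ and each $T_i$ are glued to $L_\rho$ at a single black vertex; in particular $\rho_n^T$ (the gluing vertex of $\CC^T_Z$) is a cut vertex separating $\CC^T_Z$ from the remainder $\CC_Z\setminus\CC^T_Z$. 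Consequently the graph metric $d_n$ restricted to $\CC^T_Z$ agrees with $d_n^T$, and for $x\in\CC^T_Z$, $y\in\CC_Z\setminus\CC^T_Z$ one has $d_n(x,y)=d_n(x,\rho_n^T)+d_n(\rho_n^T,y)$.

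Next I would take the correspondence $\RR_n\subseteq\CC_Z\times\CC^T_Z$ consisting of $(x,x)$ for every $x\in\CC^T_Z$ together with $(y,\rho_n^T)$ for every $y\in\CC_Z\setminus\CC^T_Z$; note $(\rho_n,\rho_n^T)\in\RR_n$ since $\rho_n$ lies on $L_\rho$. Using the two displayed identities from the previous paragraph, the only nonzero contributions to $\textsf{dis}(\RR_n)$ come from pairs involving a vertex of $\CC_Z\setminus\CC^T_Z$, and each such contribution is at most $2\sup_{y\in\CC_Z\setminus\CC^T_Z}d_n(y,\rho_n^T)\leq 2\bigl(\diam(L_\rho,d_{L_\rho})+\max_{i\neq i_{\max}}\diam(T_i,d_{T_i})\bigr)$. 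By \cref{cor:one big tree} this is $o(n^{1/2})$ with conditional probability tending to $1$, so after rescaling distances by $\sigma/(\chi\sqrt{\beta n})$ we get $\textsf{dis}(\RR_n)\to 0$ in probability; in particular $d_n(\rho_n,\rho_n^T)$ rescales to $0$, and the pointed Gromov--Hausdorff distance between $\CC_{Z,n}$ and $\CC^T_{Z,n}$ tends to $0$.

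For the measure component I would transport $\tfrac1n\nu_n$ along $\RR_n$: since $\nu_n$ is counting measure and $\CC^T_Z\subseteq\CC_Z$, the image measure on $\CC^T_Z$ equals $\tfrac1n\nu_n^T$ together with an extra atom at $\rho_n^T$ of mass $\tfrac1n\bigl(|L_\rho|+\sum_{i\neq i_{\max}}|T_i|\bigr)$, which is $o(1)$ with high probability by \cref{cor:one big tree}. This transport plan is supported on $\RR_n$, whose rescaled distortion tends to $0$, so on the canonical Gromov--Hausdorff embedding associated with $\RR_n$ the Prohorov distance between $\tfrac1n\nu_n$ and its image is $o(1)$, while the Prohorov distance between that image and $\tfrac1n\nu_n^T$ is at most the extra atomic mass, hence also $o(1)$. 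Combining the last two paragraphs via the triangle inequality for $\dGHP{\cdot,\cdot}$ yields $\dGHP{\CC_{Z,n},\CC^T_{Z,n}}\to 0$ in probability, as required.

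I do not expect any serious obstacle: all of the genuine estimates — that the non-dominant trees and the root loop are negligible in both diameter and volume — are already supplied by \cref{cor:one big tree}. The only points requiring care are the bookkeeping described above, namely verifying the cut-vertex structure that underlies the coupling (so that $d_n$ and $d_n^T$ agree on $\CC^T_Z$ and geodesics factor through $\rho_n^T$), and keeping track of the $o(1)$ atomic discrepancy at $\rho_n^T$ in the measure comparison.
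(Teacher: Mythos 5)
Your argument is correct and takes essentially the same route as the paper: both realise $\CC^T_{Z,n}$ inside $\CC_{Z,n}$ under the natural coupling and deduce from \cref{cor:one big tree} that the complement (the decorated root loop plus the non-dominant decorated trees) is negligible in both diameter and volume, which controls the Hausdorff, root and Prohorov terms simultaneously. The paper phrases this via the canonical subspace embedding rather than an explicit correspondence and transport plan, so your additional bookkeeping (the cut-vertex structure guaranteeing $d_n=d_n^T$ on $\CC^T_Z$, and the $o(1)$ atom at $\rho_n^T$) simply makes explicit what the paper leaves implicit.
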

\begin{proof}
We consider both spaces as embedded in $\CC_{Z,n}$ in the canonical way. It follows immediately from \cref{cor:one big tree}(ii),(iii) that the Hausdorff distance between the two spaces and the distance between the roots of the spaces converge to $0$ in probability as $n \to \infty$. It similarly follows that $\frac{1}{n}\nu_n(\CC_{Z,n} \setminus \CC_{Z,n}^T)$ converges to $0$ in probability. Since for any set $A \subset \CC_{Z,n}$ we clearly have that $|\nu_n(A) - \nu_n^T(A)| \leq \nu_n(\CC_{Z,n} \setminus \CC_{Z,n}^T)$, the result follows.
\end{proof}

Finally, we switch the conditioning.

\begin{prop}\label{prop:switch final conditioning}
It holds that, as $n\rightarrow\infty$,
\[\prcond{|\CC_Z| \geq n}{\tau > \beta n}{}=1+o(1)\qquad\text{and}\qquad\prcond{\tau > \beta n}{|\CC_Z| \geq n}{}=1+o(1).\]
\end{prop}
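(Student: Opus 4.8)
The plan is to reduce the proposition to showing that the two ``symmetric difference'' events are negligible, namely that, as $n\to\infty$,
\[
\pr{\tau > \beta n,\ |\CC_Z| < n} = o(n^{-1/2})
\qquad\text{and}\qquad
\pr{\tau \leq \beta n,\ |\CC_Z| \geq n} = o(n^{-1/2}).
\]
These suffice: by \eqref{eqn:progeny asymp 2} we have $\pr{\tau > \beta n} \sim c(\beta n)^{-1/2}$, so the first estimate immediately gives $\prcond{|\CC_Z| \geq n}{\tau > \beta n}{} = 1 + o(1)$; combining both estimates also yields $\pr{|\CC_Z| \geq n} = \pr{\tau > \beta n} + o(n^{-1/2}) \asymp n^{-1/2}$, whence $\prcond{\tau > \beta n}{|\CC_Z| \geq n}{} = 1 + o(1)$ as well. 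The guiding principle is that $\beta$ in \cref{def:scaling consts} is precisely the mean mass inserted per vertex of the underlying two-type tree, so that a law of large numbers forces $|\CC_Z| \approx \beta^{-1}\tau$ and the two conditioning events become asymptotically interchangeable.

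To quantify this, I would decompose $|\CC_Z|$ using \cref{def:dec tree construction of cluster} and the excursion decomposition of \cref{lem:exc to tree}. Writing $V_\rho$ for the number of cluster vertices in the (filled) root loop and letting $u$ range over the white vertices of all the decorated two-type trees hanging off the root loop, the identity $|\Dec(t)| = 1 + \sum_{u \in t_\circ}\nu_{\deg u}(G^{(u)})$ (immediate from the gluing rules of \cref{sctn:dec trees}) gives $|\CC_Z| = V_\rho + \sum_{u} \nu_{\deg u}(G^{(u)})$. Three facts then control the right-hand side: (i) the total number of underlying tree vertices equals $\tau$ up to an additive $O(Z_{\tau-1})$, and by \cref{lem:RW error small} (or a direct ladder-height bound) $Z_{\tau-1}$ is tight with exponentially decaying tails, so $\pr{Z_{\tau-1} > \log n} = o(n^{-1/2})$; (ii) for a single step of the coding walk one has $\E{\nu_{\deg u}(G^{(u)})\mathbbm{1}\{u \in t_\circ\}} = \beta^{-1}$ --- this is exactly the computation in the proof of \cref{lem:intervals all good for enriched tree convergence}(ii) --- while \cref{prop:verify assn} gives $\Var{\nu_i(G_i)} \leq Ci$, so that conditionally on the underlying trees $\Var{\sum_u \nu_{\deg u}(G^{(u)})} \leq C\tau$; (iii) the root loop carries a subcritical Boltzmann triangulation $\Pb_{P,q}$ with $P = Z_{\tau-1} - Z_\tau + 1$ and $q = \frac{1}{2}\alpha^2(1-\alpha) < \frac{2}{27}$, and the third-moment estimate of \cref{prop:verify assn} gives $\E{V_\rho^3 \mid P} \leq CP^3$, so $\pr{V_\rho \geq n^{9/10},\ P \leq \log n} = o(n^{-1/2})$.

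With these in hand, I would fix a gap $a_n = n^{9/10}$ (any exponent in $(1/2,1)$ works) and combine a renewal estimate with a concentration estimate. The renewal estimate: from $\pr{\tau \geq x} \sim cx^{-1/2}$ one gets $\pr{\beta n < \tau \leq \beta(n+a_n)} = O(n^{-1/2}\cdot a_n/n) = o(n^{-1/2})$ and likewise for the interval $(\beta(n-a_n),\beta n]$, so we may assume $\beta^{-1}\tau$ is at distance at least $a_n$ from $n$. The concentration estimate: conditionally on the underlying trees, a Chebyshev bound (using (ii)) controls $S := \sum_u \nu_{\deg u}(G^{(u)})$ about its conditional mean $\sum_u m_{\deg u}$; the conditional mean is in turn compared to $\beta^{-1}\tau$ by applying the same second-moment computation to the conditioned excursion of the peeling walk, transferred from the unconditioned walk via the Vervaat-type comparison \eqref{eqn:Vervaat dis}, splitting the excursion into its first and last three-quarters exactly as in the proofs of \cref{lem:intervals all good for enriched tree convergence} and \cref{lem:dis Rn3}. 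This gives $\pr{|S - \beta^{-1}\tau| > a_n/2,\ \tau \leq An} \leq Ca_n^{-2}\E{\tau\,\mathbbm{1}\{\tau \leq An\}} \leq CAn\,a_n^{-2} = O(An^{-4/5}) = o(n^{-1/2})$ for each fixed $A$, while $\pr{\tau > An} \sim c(An)^{-1/2}$ tends to $0$ relative to $n^{-1/2}$ as $A \to \infty$. Combining the renewal estimate, the concentration estimate, and the bounds on $V_\rho$ and $Z_{\tau-1}$, on each of $\{\tau > \beta n\}$ and $\{\tau \leq \beta n\}$ we have $|\CC_Z| = \beta^{-1}\tau + O(a_n)$ off a set of probability $o(n^{-1/2})$, which forces $|\CC_Z| \geq n$, respectively $|\CC_Z| < n$, there --- establishing the two estimates. (On $\{\tau > \beta n\}$ one may alternatively use \cref{cor:one big tree} to discard the root loop and the small trees, but this is not needed.)

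I expect the main obstacle to be this quantitative matching at the threshold: since both $\{\tau > \beta n\}$ and $\{|\CC_Z| \geq n\}$ have probability only of order $n^{-1/2}$, the discrepancy must be shown to be of \emph{strictly} smaller order, which dictates the choices above --- one needs the precise renewal asymptotic $\pr{\tau \geq x} \sim cx^{-1/2}$ of \eqref{eqn:progeny asymp 2} rather than just its order; the gap $a_n$ cannot be taken as small as $\sqrt n$, since the Chebyshev error would then only be $O(n^{-1/2})$; and $\tau$ must be truncated at $An$ before Chebyshev is applied, because $\E{\tau} = \infty$. Everything else --- the law of large numbers for $S$ and the negligibility of $V_\rho$ and of $Z_{\tau-1}$ --- is routine given the moment estimates of \cref{prop:verify assn} and the computations of \cref{sctn:dec tree limits}.
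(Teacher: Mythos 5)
Your argument is correct in outline and rests on the same core ingredients as the paper's proof --- the tail asymptotic \eqref{eqn:progeny asymp 2}, the Vervaat-type transfer \eqref{eqn:Vervaat dis} applied (after conditioning on $\tau=m$) to windows anchored at a uniform time, Chebyshev/law-of-large-numbers bounds for the per-step insertions with mean $\beta^{-1}$, and control of the terminal data via \cref{lem:RW error small} and the moment bounds of \cref{prop:verify assn} --- but it is organised quite differently. You prove the strictly stronger statement that both symmetric differences have probability $o(n^{-1/2})$, using an additive gap $a_n=n^{9/10}$, a truncation to $\{\tau\le An\}$ and a final $A\to\infty$ limit. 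The paper instead works with multiplicative $\epsilon$-slack: for the first claim it conditions on $\{\tau>(1+3\epsilon)\beta n\}$, which costs only $1-c\epsilon$ conditionally on $\{\tau>\beta n\}$ by \eqref{eqn:progeny asymp 2}, and then needs only a plain law of large numbers over a window of length proportional to $n$ --- no rate, no truncation, and no control of $\pr{\tau>An}$; for the second claim it bounds $\pr{\tau\le(1-\epsilon)\beta n,\ |\CC_Z|\ge n}\le C/(n\epsilon^2)$ by exactly your Vervaat-plus-Chebyshev mechanism, divides by $\pr{|\CC_Z|\ge n}\ge cn^{-1/2}$, and handles the residual band $[(1-\epsilon)\beta n,\beta n]$ at conditional cost $C\epsilon$, sending $\epsilon\to0$ at the end. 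What your route buys is a quantitative, symmetric treatment of the two claims; what it costs is bookkeeping: the additive gap forces the $\{\tau\le An\}$ truncation in the first claim (Chebyshev at scale $a_n$ is useless on $\{\tau>An\}$, and $\pr{\tau>An}$ is only of order $A^{-1/2}n^{-1/2}$, whence the double limit), and it forces $o(n^{-1/2})$ --- rather than merely tight --- control of the terminal quantities. On that last point, two small repairs: the band estimate follows from $\pr{\tau\ge x}\sim cx^{-1/2}$ only as an $o(n^{-1/2})$ bound, not with the rate $O(n^{-1/2}a_n/n)$ you state (the $o(\cdot)$ form is all you use, so this is harmless); and $\pr{Z_{\tau-1}>\log n}=o(n^{-1/2})$ as written presumes the exponential tail rate exceeds $1/2$, so you should cut at $K\log n$ for large $K$ (or at $n^{\epsilon}$), which the Green's-function/ladder argument you invoke does deliver. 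Neither issue affects the validity of the approach.
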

\begin{proof}
Conditionally on $Z$, we can write
\begin{equation}\label{eqn:CZ as sum}
|\CC_Z| = \sum_{i=0}^{\tau} \xi^{(i)} - \delta(Z),
\end{equation}
where the distribution of $\xi^{(i)}$ depends only on the jump of $Z$ at time $i$; precisely, the $\xi^{(i)}$ can be chosen so that they are zero if $Z$ has a positive increment at $i$, and being distributed as $m_{k+1}(G_{k+1})$ if $Z$ has a jump of size $-k$ at $i$. The term $\delta(Z)$ is a non-negative error term to account for the fact that we have a different boundary condition on the root face. We can thus write
\begin{eqnarray*}
\lefteqn{\prcond{\CC_Z \geq n}{\tau > \beta n}{}}\\
 &\geq &\prcond{\sum_{i=0}^{\tau} \xi^{(i)}\geq (1+\varepsilon) n}{\tau > \beta n}{}-\prcond{\delta(Z)\geq \varepsilon n}{\tau > \beta n}{}.
\end{eqnarray*}
Since $\delta(Z)$ is bounded above by the size of the unpercolated decorated root loop, it follows from \cref{lem:RW error small} it can be can be uniformly stochastically dominated by an order $1$ random variable, and hence the second term is negligible in the $n\rightarrow\infty$ limit. As for the first term, we have that for some $c>0$ it is bounded below by
\begin{eqnarray*}
\lefteqn{\prcond{\sum_{i=0}^{\tau} \xi^{(i)}\geq (1+\varepsilon) n}{\tau >(1+3\varepsilon) \beta n}{}\prcond{\tau > (1+3\varepsilon)\beta n}{\tau > \beta n}{}}\\
&\geq &\prcond{\sum_{i=U-\frac{1}{2}(1+2\varepsilon) \beta n}^{U+\frac{1}{2}(1+2\varepsilon) \beta n} \xi^{(i)}\geq (1+\varepsilon) n}{\tau >(1+3\varepsilon) \beta n}{}(1-c\varepsilon),
\end{eqnarray*}
where we have applied \eqref{eqn:progeny asymp 2} to deduce the second inequality, the indices are taken modulo $\tau$ and , conditionally on $\tau$, $U$ is chosen uniformly from the set $\{0, \ldots, \tau\}$. We next apply \eqref{eqn:Vervaat dis} (twice, to both the forward and reverse excursion starting from $U$) along with a union bound to deduce that there exists $C<\infty$ such that
\[\prcond{\sum_{i=U-\frac{1}{2}(1+2\varepsilon) \beta n}^{U+\frac{1}{2}(1+2\varepsilon) \beta n} \xi^{(i)}< (1+\varepsilon) n}{\tau >(1+3\varepsilon) \beta n}{}\leq
C \pr{\sum_{i=0}^{\frac{1}{2}(1+2\varepsilon) \beta n} \xi^{(i)}< \frac{1}{2}(1+\varepsilon) n},\]
where the variables on the right-hand side are defined in terms of an unconditioned version of $Z$. In particular, under this unconditioned law, the $\xi^{(i)}$ have mean $\beta^{-1}$, and so the law of large numbers implies that the upper bound above converges to 0 as $n\rightarrow\infty$. In particular, we have so far shown that $\prcond{\CC_Z \geq n}{\tau > \beta n}{}\geq 1-c\varepsilon$, and since $\epsilon>0$ was arbitrary, this gives the first claim.

For the second claim, we start by noting that it follows from the first claim and \eqref{eqn:progeny asymp 2} that there exists $c>0$ such that
\begin{equation}\label{eqn:C vol tails LB}
\pr{|\CC_Z| \geq n} \geq (1-o(1))\pr{\tau \geq \beta n} \geq cn^{-1/2}.
\end{equation}
Combining this with another application of \eqref{eqn:progeny asymp 2}, we deduce that there exists $C<\infty$ such that
\begin{align*}
\prcond{\tau \in [(1-\epsilon)\beta n, \beta n]}{|\CC_Z| \geq n}{} \leq \frac{\pr{\tau \in [(1-\epsilon)\beta n, \beta n]}}{\pr{|\CC_Z| \geq n}} \leq C\epsilon,
\end{align*}
which implies in turn that
\begin{align}\label{eqn:sigma decomp}
\prcond{\tau > \beta n}{|\CC_Z| \geq n}{} \geq \prcond{\tau > (1-\epsilon)\beta n}{|\CC_Z| \geq n}{} - C\epsilon.
\end{align}
Moreover, for the complement of the latter probability we can write
\begin{align}\label{eqn:lifetime small Bayes}
\prcond{\tau \leq (1-\epsilon)\beta n}{|\CC_Z| \geq n}{} = \frac{\pr{\tau \leq (1-\epsilon)\beta n, |\CC_Z| \geq n}}{\pr{|\CC_Z| \geq n}}.
\end{align}
Recall from \eqref{eqn:CZ as sum} that $|\CC_Z|$ is bounded above by $\sum_{i=0}^{\tau} \xi^{(i)}$. In fact the law of $\xi^{(i)}$ also implicitly depends on $\tau$ through the dependence on $Z_i-Z_{i-1}$. To make this precise, we write $\xi^{(i, \tau)}$ in place of $\xi^{(i)}$ in what follows, and reserve the notation $\xi^{(i)}$ for the variables associated with an unconditioned random walk. By the Vervaat transform estimate of \eqref{eqn:Vervaat dis}, it therefore follows that there exists $C<\infty$ such that
\begin{align*}
\lefteqn{\pr{\tau \leq (1-\epsilon)\beta n, |\CC_Z| \geq n}}\\
&=\sum_{m=1}^{(1-\varepsilon)\beta n}
\prcond{|\CC_Z| \geq n}{\tau=m}{}\pr{\tau=m}\\
&\leq \sum_{m=1}^{(1-\varepsilon)\beta n}
\prcond{\sum_{i=0}^{\tau} \xi^{(i, \tau)}\geq n}{\tau=m}{}\pr{\tau=m}\\
&\leq \sum_{m=1}^{(1-\varepsilon)\beta n}
\left(\prcond{\sum_{i=U}^{U+\frac{1}{2}m} \xi^{(i,\tau)}\geq n/2}{\tau=m}{}+\prcond{\sum_{i=U-\frac{1}{2}m}^U \xi^{(i,\tau)}\geq n/2}{\tau=m}{}\right)\pr{\tau=m}\\
&\leq C\sum_{m=1}^{(1-\varepsilon)\beta n}
\pr{\sum_{i=0}^{m/2} \xi^{(i)}\geq n/2}\pr{\tau=m}\\
&\leq C\pr{\sum_{i=0}^{(1-\epsilon)\beta n/2} \xi^{(i)}\geq n/2}.
\end{align*}
In particular, in the sums within the probabilities on the fourth line above, we consider the indices $i$ modulo $m$, and we use that the time reversal of a random walk excursion is also a random walk excursion to apply \eqref{eqn:Vervaat dis} to each of the relevant pieces. In the final line we use the fact that the $\xi^{(i)}$ are non-negative to stochastically dominate by the case $m=(1-\epsilon)\beta n/2$. Without the conditioning, the $(\xi^{(i)})_{i\geq 0}$ form an i.i.d.\ sequence, with mean given by $\beta^{-1}$ and finite variance (by the fact that Assumptions \ref{assn:offspring} and \ref{assn:inserted graph general assumption} are both satisfied in the present setting). Therefore, by Chebyshev's inequality there exists a constant $c<\infty$ such that
\begin{align*}
\pr{\sum_{i=0}^{(1-\epsilon)\beta n/2} \xi^{(i)} \geq n/2} \leq \frac{\beta n\Var{\xi^{(1)}}/2}{(\epsilon n/2)^2} = \frac{c}{n\epsilon^2}.
\end{align*}
Substituting back into \eqref{eqn:lifetime small Bayes} and recalling \eqref{eqn:C vol tails LB}, we deduce that there exists $c<\infty$ such that
\begin{align*}
\prcond{\tau \leq (1-\epsilon)\beta n}{|\CC_Z| \geq n}{} \leq \frac{c}{n^{1/2}\epsilon^2}.
\end{align*}
Hence from \eqref{eqn:sigma decomp} we obtain that
\begin{align*}
\prcond{\tau > \beta n}{|\CC_Z| \geq n}{} \geq  1- \frac{c}{n^{1/2}\epsilon^2} - C\epsilon.
\end{align*}
Again, since $\epsilon>0$ was arbitrary, this gives the second claim.
\end{proof}

We conclude with the proof of \cref{thm:main scaling lim}.

\begin{proof}[Proof of \cref{thm:main scaling lim}]
It follows directly from \cref{prop:GHP convergence of enriched trees} and the earlier results of this section (i.e.\ Proposition \ref{prop:verify assn}, Lemmas \ref{lem:RW error small} and \ref{lem:exc to tree}, and Corollary \ref{cor:one big tree}), that, when conditioned on $\{\tau\geq \beta n\}$, $\CC_{Z,n}^T$ converges to $(\T, d_\T, \nu_\T, \rho_\T)$ in distribution with respect to the GHP topology as $n \to \infty$. Thus, from \cref{prop:dGHP small under coupling}, we have the same result when $\CC_{Z,n}^T$ is replaced by $\CC_{Z,n}$. Moreover, applying \cref{prop:switch final conditioning} and \cref{lem:switch conditioning}, we can replace $\{\tau \geq \beta n\}$ by $\{|\CC_Z| \geq n\}$ in the conditioning. In particular, we have that, when conditioned on $\{|\CC_Z| \geq n\}$, $\CC_{Z,n}$ converges to $(\T, d_\T, \nu_\T, \rho_\T)$ in distribution with respect to the GHP topology as $n \to \infty$. Since $\CC_{Z,n}$ is simply a rescaled copy of $\CC_Z$, which has the same distribution as the percolation cluster of interest by \cref{prop:def36ok}, the latter statement is equivalent to \cref{thm:main scaling lim}.
\end{proof}

\section{Random walk scaling limit}\label{sctn:RW}

The aim of this section is to prove \cref{thm:main RW}. As noted in the introduction, to do this, it is useful to apply the general result on the scaling limits of stochastic processes associated with resistance forms of \autocite{DavidResForms}, cf.\ \autocite{CrHaKu}. In this direction, we first introduce the effective resistance for a simple, finite, connected, unweighted graph $G=(V,E)$. In particular, the effective resistance $R_G=(R_G(x,y))_{x,y\in V}$ on $G$ is defined by setting $R_G(x,x)=0$, $\forall x\in V$, and
\begin{equation}\label{effres}
R_G(x,y)^{-1}:=\inf\left\{\mathcal{E}_G(f):\:f:V\rightarrow \mathbb{R},\:f(x)=0,\:f(y)=1\right\},\qquad \forall x,y\in V,\:x\neq y,
\end{equation}
where
\[\mathcal{E}_G(f):=\frac12\sum_{\substack{x,y\in V:\\\{x,y\}\in E}}\left(f(x)-f(y)\right)^2\]
is the associated electrical energy. (We note that $R_G$ is a metric on $V$, see \autocite[Proposition 3.3]{Kigq}, for example; moreover, it coincides with the standard physical notion of electrical resistance on $G$ and is upper bounded by the graph distance.) Moreover, we will consider the degree measure $\tilde{\nu}_G$ on $V$, obtained by setting
\[\tilde{\nu}_G\left(\{x\}\right):=\#\left\{y:\:\{x,y\}\in E\right\}.\]
(For our decorated tree model, we recall from Remark \ref{listrem}(3) how the degree measure can be constructed from the degree measures of the constituent pieces.) The stochastic process naturally associated with the resistance metric space $(V,R_G,\tilde{\nu}_G)$ is the continuous-time Markov chain $(X_t^G)_{t\geq 0}$ with generator given by
\begin{equation}\label{gen}
\mathcal{L}_G(f)(x):=\frac{1}{\tilde{\nu}_G\left(\{x\}\right)}\sum_{\substack{y\in V:\\\{x,y\}\in E}}\left(f(y)-f(x)\right).
\end{equation}
Since this process has unit mean exponential holding times, its long-time behaviour is similar to that of the discrete-time simple random walk on $G$, and it will be sufficient to consider the scaling limit of this process in order to establish \cref{thm:main RW}.

To apply the results of \autocite{DavidResForms}, we first check that it is possible to replace the shortest-path metric $d_n$ with the effective resistance metric on $C_n$, which we will denote by $R_n$, and the counting measure $\nu_n$ with the degree measure, which we will denote by $\tilde{\nu}_n$, in the results of \cref{sctn:dec tree limits}. Specifically, we have the following.

\begin{prop}\label{resconv} In the setting of Theorem \ref{thm:main scaling lim}, there exist deterministic constants $\delta,\kappa\in (0,\infty)$ and a probability space $(\Omega, \F, \Pb)$ upon which the relevant objects are built such that
\[\sup_{x,y\in C_n}n^{-1/2}\left|d_n(x,y)-\delta^{-1} R_n(x,y)\right|\rightarrow 0\]
and
\[d_{C_n}^P\left(n^{-1}\nu_n,\kappa^{-1} n^{-1}\tilde{\nu}_n\right)\rightarrow 0\]
in probability as $n\rightarrow\infty$, where $d_{C_n}^P$ is the Prohorov distance between measures on $C_n$ and distances are measured with respect to either $n^{-1/2}d_n$ or $n^{-1/2}R_n$.
\end{prop}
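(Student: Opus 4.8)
The plan is to re-run the arguments of \cref{sctn:dec tree limits} and \cref{sctn:main result proof} with the effective resistance metric in place of the graph metric and the degree measure in place of the counting measure, and then to compare the two resulting families of rescaled spaces directly, using that they are built on the same underlying (random) graph. As a first reduction, by \cref{prop:def36ok}, \cref{prop:switch final conditioning} and \cref{lem:switch conditioning} it is enough to prove both displayed convergences for $\CC_Z$ under the conditioning $\{\tau\geq\beta n\}$; and by \cref{cor:one big tree}, whose diameter and volume bounds are stated for the graph metric and hence (since effective resistance is dominated by graph distance) also control the excised root loop and small trees in the resistance metric, it suffices to prove them for the decorated version $\CC^T_Z$ of the largest tree in \cref{lem:exc to tree}, which is a decorated two-type Galton--Watson tree of the form $\Dec(T_n)$ studied in \cref{sctn:dec tree limits}, now additionally equipped with its effective resistance metric $\tilde R_n$ and degree measure $\tilde\nu_n$.

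For the metric comparison, recall from \cref{listrem}(1)--(2) that both the graph metric $\td_n$ and the resistance metric $\tilde R_n$ on $\Dec(T_n)$ are decorated-tree metrics of the form \eqref{eqn:decorated metric def}, built from the graph metrics $d_i$, respectively the effective resistance metrics $R_i$, on the inserted blocks $G_i$; the additivity of effective resistance along the tree is valid here precisely because each gluing vertex is a cut point of $\Dec(T_n)$. Since $R_i\leq d_i$ pointwise, \cref{assn:inserted graph general assumption}(1) holds for $(G_i,R_i)$ by \cref{prop:verify assn}, so the proofs of \cref{lem:dis Rn1}, \cref{lem:max diam} and \cref{lem:dis Rn3} go through with $\tilde R_n$ in place of $\td_n$. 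In particular, writing $R_n^*$ for the metric on $t_\bullet$ inherited from $(\Dec(T_n),\tilde R_n)$, one obtains the resistance analogue of \eqref{eqn:dstar good}: for each fixed $\epsilon>0$, with probability tending to $1$, $|R_n^*(u,v)-\tfrac{\chi^{\mathrm{res}}}{2}d_n^{\text{tr}}(u,v)|\leq \epsilon\, d_n^{\text{tr}}(u,v)\vee n^{1/3}$ for all $u,v\in t_\bullet$, where $\chi^{\mathrm{res}}:=\E{\mu_\bullet}\sum_{i\geq 1}i\mu_\circ(i)\E{\eta^{\mathrm{res}}_{i+1}}$ and $\eta^{\mathrm{res}}_i$ is the effective resistance between two uniformly chosen boundary vertices of $G_i$; here $0<\chi^{\mathrm{res}}\leq\chi<\infty$, since $\eta^{\mathrm{res}}_i\leq\eta_i\leq i$ and any two distinct vertices of a connected graph have positive effective resistance, and the large-deviations input to \cref{lem:dis Rn3} applies unchanged to the i.i.d.\ block resistances, which have exponential moments and mean $\chi^{\mathrm{res}}$. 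Combining this with \eqref{eqn:dstar good} itself via the triangle inequality, and setting $\delta:=\chi^{\mathrm{res}}/\chi\in(0,\infty)$, gives $|d_n^*(u,v)-\delta^{-1}R_n^*(u,v)|\leq C\epsilon\,(d_n^{\text{tr}}(u,v)\vee n^{1/3})$ for all $u,v\in t_\bullet$ with probability tending to $1$; since $n^{-1/2}\diam(T_n,d_n^{\text{tr}})$ converges almost surely to a finite limit and $n^{1/3}=o(n^{1/2})$, letting $\epsilon\to0$ yields $\sup_{u,v\in t_\bullet}n^{-1/2}|d_n^*(u,v)-\delta^{-1}R_n^*(u,v)|\to0$ in probability. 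Extending from $t_\bullet$ to all of $\Dec(T_n)$ via \cref{lem:max diam} (every vertex lies within $o(n^{1/2})$, in either metric, of $t_\bullet$) and transporting back through the reductions above then gives the first claim.

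For the measure comparison, \cref{listrem}(3) identifies $\tilde\nu_n$ on $\Dec(T_n)$ with the decorated-tree measure built from the degree measures $\tilde\nu_i$ on the blocks, and one verifies \cref{assn:inserted graph general assumption}(2) for $(\tilde\nu_i)_{i\geq1}$ along the lines of \cref{prop:verify assn}: the lower bound $\tilde\nu_i(G_i)\geq 2i$ is immediate, while Euler's formula for triangulations expresses $\tilde\nu_i(G_i)$ (twice the number of edges of $G_i$) in terms of the vertex and face counts of $G_i$, whose moments are controlled by the Boltzmann generating-function estimates used there. One may then invoke \cref{lem:intervals all good for enriched tree convergence}(ii) with $m_i=\tilde\nu_i$ as well as with $m_i=\nu_i$: with $\beta^{\mathrm{res}}:=(\sum_{i\geq1}\mu(-i)\E{\tilde\nu_{i+1}(G_{i+1})})^{-1}\in(0,\infty)$, $\kappa:=\beta/\beta^{\mathrm{res}}\in(0,\infty)$, and $\tilde\nu_n^*$ the analogue of the measure $\nu_n^*$ of \eqref{eqn:def nu n star}, for fixed $\epsilon,\delta'>0$ one has with high probability $|\tfrac1n\nu_n^*(I_k^\epsilon)-\epsilon\beta^{-1}|\leq\delta'$ and $|\tfrac1n\tilde\nu_n^*(I_k^\epsilon)-\epsilon(\beta^{\mathrm{res}})^{-1}|\leq\delta'$ for all $k$; since $\kappa^{-1}\epsilon(\beta^{\mathrm{res}})^{-1}=\epsilon\beta^{-1}$, the measures $n^{-1}\nu_n$ and $\kappa^{-1}n^{-1}\tilde\nu_n$ then agree on each block $I_k^\epsilon$ up to $(1+\kappa^{-1})\delta'$. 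Running Steps 1--2 of the proof of \cref{prop:GHP convergence of enriched trees} shows that $\CC_Z$ is covered by $O(\epsilon^{-1})$ sets (the $\tilde\pi_n$-images of the $I_k^\epsilon$) of $n^{-1/2}d_n$-diameter at most $\epsilon^{1/3}+o(1)$, on each of which $n^{-1}\nu_n$ and $\kappa^{-1}n^{-1}\tilde\nu_n$ differ by $o(1)$ (using, as in that proof, that up to the negligible root discrepancy all the mass of both measures is carried by the inserted blocks); choosing $\delta'$ small relative to $\epsilon$ gives $d^P(n^{-1}\nu_n,\kappa^{-1}n^{-1}\tilde\nu_n)\leq\epsilon^{1/3}+o(1)$ in probability with respect to $n^{-1/2}d_n$, hence also with respect to $n^{-1/2}R_n$ by the metric comparison of the previous paragraph. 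Letting $\epsilon\to0$ and transporting back through the reductions completes the argument.

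The main obstacle is verifying that the proof of \cref{lem:dis Rn3} survives the passage to the effective resistance metric; the crucial points there are the series law for effective resistance along the decorated tree (valid because the gluing vertices are cut points) and the fact that the i.i.d.\ block resistances entering the Chernoff estimate have a finite, strictly positive mean and exponential moments, both of which follow from $R_i\leq d_i$ and $\eta^{\mathrm{res}}_i\leq i$. The only other step requiring genuine care is \cref{assn:inserted graph general assumption}(2) for the degree measures, which is routine but relies on the explicit Boltzmann generating functions of \autocite{ray2014geometry}, exactly as in the proof of \cref{prop:verify assn}.
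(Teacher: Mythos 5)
Your proposal is correct and follows essentially the same route as the paper: verify \cref{assn:inserted graph general assumption} for the blocks equipped with effective resistance (via $R_i\leq d_i$) and with the degree measure (via Euler's formula and the Boltzmann moment bounds of \autocite{ray2014geometry}), rerun the decorated-tree distortion and interval-mass estimates for both metric/measure pairs, and reduce from $\C_n$ to the decorated tree via \cref{lem:exc to tree}, \cref{cor:one big tree} and \cref{prop:switch final conditioning}. The only differences are presentational: the paper compares $d_n^*$ and $R_n^*$ by a triangle inequality through $d_n^{\text{tr}}$ (which amounts to your direct resistance analogue of \eqref{eqn:dstar good}) and compares the two measures through their common limit $\nu_\T$ on the canonical embedding rather than via your blockwise interval-mass comparison, but the ingredients and constants ($\delta=\chi_R/\chi_d$, $\kappa=\beta/\tilde{\beta}$) are the same.
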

\begin{proof}
Let $(G_i)_{i\geq 1}$ be as in the statement of Proposition \ref{prop:verify assn}. Let $R_i$ be the effective resistance metric on $G_i$. Since $R_i(x,y)\leq d_i(x,y)$, $\forall x,y \in G_i$, where $d_i$ is the shortest path metric on $G_i$, we have from Proposition \ref{prop:verify assn} that Assumption \ref{assn:inserted graph general assumption}(1) is satisfied with the metric under consideration being $R_i$. Moreover, let $\tilde{\nu}_i$ be the degree measure on $G_i$. Using the fact that the maps we consider here are triangulations, it is an elementary exercise to check from Euler's polyhedron formula that
\[\tilde{\nu}_i(G_i)\leq 6 \#\tilde{G}_i,\]
where we write $\tilde{G}_i$ for the unpercolated map selected from $\mathbb{P}_{i,q}$. From this and the proof of Proposition \ref{prop:verify assn}, we find that Assumption \ref{assn:inserted graph general assumption}(2) holds for the degree measures. In particular, this confirms Assumption \ref{assn:inserted graph general assumption} is met by the sequence $((G_i,R_i,\tilde{\nu}_i,\ell_i))_{i\geq 1}$.

We next note that, by using the decomposition of the cluster of interest given by Lemma \ref{lem:exc to tree} and basic estimates of Corollary \ref{cor:one big tree} (adapted to the effective resistance metric and degree measures), as well as the change of conditioning facilitated by Proposition \ref{prop:switch final conditioning}, to establish the proposition, it will suffice to show the corresponding result for the decorated tree model, where the offspring distribution is as described in Proposition \ref{prop:tree distribution}, the decorations are defined as in the previous paragraph, and the conditioning is on the length of the excursion encoding the two-type tree having length no less than $\beta n$.

Now, let $T_n$ be the relevant conditioned two-type tree and $\Dec (T_n)$ be the corresponding decorated tree. Write $d_n^*$ for the shortest path metric on $\Dec (T_n)$, and $R_n^*$ be the associated resistance on $\Dec (T_n)$. If $\chi_d$ and $\chi_R$ are the shortest path and effective resistance versions of the constant of Definition \ref{def:scaling consts}(b), then it is straightforward to see that, if $d_n^{\text{tr}}$ is as defined in \cref{prop:dis to 0 dec tree} and the correspondence between $u$ and $u'$ (and $v$ and $v'$) is as in the proof of Lemma \ref{lem:dis Rn3}, then:
\begin{eqnarray*}
\lefteqn{\sup_{x,y\in \Dec(T_n)}\left|\chi_d^{-1}d^*_n(x,y)-\chi_R^{-1} R^*_n(x,y)\right|}\\
&\leq &\sup_{u,v\in T_n}\left|2^{-1}d_n^{\text{tr}}(u,v)-\chi_d^{-1} d^*_n(u',v')\right|\\
&&\qquad+\sup_{u,v\in T_n}\left|2^{-1}d_n^{\text{tr}}(u,v)-\chi_R^{-1} R^*_n(u',v')\right|\\
&&\qquad+\sup_{u\in t_\circ(T_n)}\diam\left(G^{(u)},\chi_R^{-1} R^*_n\right)+\sup_{u\in t_\circ(T_n)}\diam\left(G^{(u)},\chi_d^{-1} d^*_n\right).
\end{eqnarray*}
Since both metrics $R_n^*$ and $d_n^*$ satisfy Assumption \ref{assn:inserted graph general assumption}, we have from Lemma \ref{lem:max diam} and the proof of Lemma \ref{lem:dis Rn3} that the above expression, when divided by $n^{1/2}$, converges to 0 in probability, which establishes the first part of the desired result.

To handle the measure, it will be convenient to work on the probability space where $\Dec (T_n)$ and $\mathcal{T}$ are coupled as in the proof of Proposition \ref{prop:dis to 0 dec tree}. In particular, we showed in the proof of Proposition \ref{prop:GHP convergence of enriched trees} that, on this probability space,
\[d_P^{F_n}\left(\beta n^{-1}\tilde{m}_n,\nu_\mathcal{T}\right)\rightarrow 0\]
in probability, where $(F_n,D_{F_n})$ was the space of the canonical Gromov-Hausdorff embedding, which was defined in \cref{sctn:GHP topology}; in fact this construction can be straightforwardly extended so that we can embed all three of the spaces $\left(\Dec (T_n), \frac{\sigma}{\chi_d\sqrt{n}}d_n^*\right)$, $\left(\Dec (T_n), \frac{\sigma}{\chi_R\sqrt{n}}R_n^*\right)$ and $(\T, d_{\T})$ in $(F_n, D_n)$ such that the Gromov-Hausdorff distance between each pair goes to $0$ in probability. Hence, if $\nu^{\Dec(T_n)}$ is the counting measure on $\Dec (T_n)$ and $\tilde{\nu}^{\Dec (T_n)}$ is the degree measure, then
\begin{align*}
d_P^{F_n}\left(\beta n^{-1}\nu^{\Dec(T_n)},\tilde{\beta} n^{-1}\tilde{\nu}^{\Dec(T_n)}\right) \leq
d_P^{F_n}\left(\beta n^{-1}\nu^{\Dec(T_n)},\nu_\mathcal{T}\right)+
d_P^{F_n}\left(\tilde{\beta} n^{-1}\tilde{\nu}^{\Dec(T_n)},{\nu}_\mathcal{T}\right) \rightarrow 0
\end{align*}
in probability as $n\rightarrow \infty$, where $\beta$ and $\tilde{\beta}$ are the versions of the constant of Definition \ref{def:scaling consts}(a) for the counting and degree measures, respectively. This completes the proof of the second claim in the case where the metric is $D_{F_n}$ and hence also in the case where it is $\frac{\sigma}{\chi_R\sqrt{n}}R_n^*$ or $\frac{\sigma}{\chi_d\sqrt{n}}d_n^*$ (since $D_{F_n}$ is lower bounded by a constant multiple of either of them).
\end{proof}

Putting \cref{thm:main scaling lim} and \cref{resconv} together, it is an almost immediate consequence of \autocite{DavidResForms} that the associated random walks converge. To state the next result, we note that the limiting space $(\mathcal{T},d_\mathcal{T},\nu_\mathcal{T})$ is almost-surely a compact resistance metric measure space, and is therefore naturally equipped with a corresponding stochastic process $(X^\mathcal{T}_t)_{t\geq 0}$; see \autocite[Section 6]{DavidCRT} for further details. Moreover, for a stochastic process $X^K$ on a random state-space $K$, equipped with a metric $d_K$, measure $\mu_K$ and distinguished point $\rho_K$, we define the associated annealed law of $X^K$ started from $\rho_K$ to be the probability measure on $\tilde{\mathbb{K}}_c$ given by
\[\mathbb{P}_K\left(\cdot\right):=\int P^K_{\rho_K}\left((K,d_K,\mu_K,\rho_K,X^K)\in \cdot\right)\mathbb{P}\left(d(K,d_K,\mu_K,\rho_K)\right),\]
where $\mathbb{P}$ is the probability measure under which $(K,d_K,\mu_K,\rho_K)$ is selected, and, for a particular realisation of $K$, $P^K_{\rho_K}$ is the law of $X^K$ started from $\rho_K$. In all the cases we need to consider here, the latter object is measurable with respect to $\mathbb{P}$ (see \cite[Proposition 6.1]{Noda}, for example) and so there is no issue in defining the integral.

\begin{cor}\label{RWcor} For a given realisation of $C_n$, let $X^{C_n,\mathrm{cont}}$ be the continuous-time Markov process on $C_n$, with generator defined as at \eqref{gen}. It is then the case that, as $n\rightarrow\infty$, the annealed laws of
\[\left(C_n,\frac{1}{\delta\gamma\sqrt{n}}R_n,\frac{1}{\kappa n}\tilde{\nu}_n,\rho,\left(X^{C_n,\mathrm{cont}}_{\theta n^{3/2}t}\right)_{t\geq 0}\right)\]
converge weakly as probability measures on $\tilde{\mathbb{K}}_c$ to the annealed law of
\[\left(\mathcal{T},d_\mathcal{T},\nu_\mathcal{T},\rho_\mathcal{T},X^\mathcal{T}\right),\]
where the constants $\delta$, $\gamma$ and $\kappa$ are as given by \cref{thm:main scaling lim} and \cref{resconv}, and $\theta:=\delta\gamma\kappa$.
\end{cor}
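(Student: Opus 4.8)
The plan is to deduce \cref{RWcor} directly from the abstract convergence result of \autocite{DavidResForms} (in the form adapted to c\`adl\`ag paths in \autocite{Khezeli, Noda}), using \cref{thm:main scaling lim} and \cref{resconv} to verify its hypotheses. First I would recall the precise statement we are invoking: if $(K_n, R_n, \mu_n, \rho_n)$ is a sequence of compact resistance metric measure spaces converging in the pointed Gromov--Hausdorff--Prohorov topology to a compact resistance metric measure space $(K, R, \mu, \rho)$, with $\mathrm{supp}(\mu_n)=K_n$ and $\mathrm{supp}(\mu)=K$, then the associated Markov processes (with generators of the form \eqref{gen}, run at the natural time scale) converge in the extended topology of $\tilde{\mathbb{K}}_c$, jointly with the spaces. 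This is exactly the framework of \autocite[Theorem 7.2]{DavidResForms} together with the path-topology upgrade; since these are quoted results, the work is entirely in matching notation and normalisations.

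The substantive input is then the observation that, by \cref{resconv}, we may replace $d_n$ by $\delta^{-1}R_n$ and $\nu_n$ by $\kappa^{-1}\tilde\nu_n$ in \cref{thm:main scaling lim} up to an error that vanishes in probability (after dividing distances by $\sqrt n$ and using that the Prohorov distance is controlled). Combining this with \cref{thm:main scaling lim}, we obtain that
\[
\left(C_n, \frac{1}{\delta\gamma\sqrt{n}} R_n, \frac{1}{\kappa n}\tilde\nu_n, \rho\right) \overset{(d)}{\rightarrow} (\T, d_\T, \nu_\T, \rho_\T)
\]
with respect to the pointed GHP topology, where we use crucially that $(\T,d_\T)$ is itself a resistance metric space with $d_\T$ the associated resistance metric (see \autocite[Section 6]{DavidCRT}), and that $\nu_\T$ has full support. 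I would then invoke the continuity of the map sending a resistance metric measure space to (the law of) its canonical process, as established in \autocite{DavidResForms, Noda}, to upgrade this to annealed convergence of the spaces-with-processes in $\tilde{\mathbb{K}}_c$. The only remaining point is bookkeeping of the time scale: the process $X^{C_n,\mathrm{cont}}$ associated with unit conductances and degree measure $\tilde\nu_n$, when the space is rescaled so that resistances are multiplied by $(\delta\gamma\sqrt n)^{-1}$ and the measure by $(\kappa n)^{-1}$, has its natural time scale multiplied by the product of these factors, i.e.\ by $(\delta\gamma\kappa)^{-1}n^{-3/2}$; since the natural time scale of $X^\T$ is order one, this forces the time change $t\mapsto \theta n^{3/2} t$ with $\theta = \delta\gamma\kappa$, matching the statement.

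The main obstacle is not any single hard estimate but rather verifying that all the hypotheses of the \autocite{DavidResForms} machinery genuinely hold here: in particular that the limiting space $(\T, d_\T, \nu_\T)$ is a \emph{resistance} metric measure space in the required sense (compact, with $d_\T$ a resistance metric, $\nu_\T$ a finite Borel measure of full support), which is supplied by \autocite{DavidCRT}; that the discrete spaces $C_n$ are (almost surely) finite connected graphs so that \eqref{effres} defines a genuine resistance metric; and that the convergence in \cref{thm:main scaling lim}/\cref{resconv} is with respect to a topology strong enough (pointed GHP) to feed into the path-convergence result. A secondary subtlety is the passage from the continuous-time chain $X^{C_n,\mathrm{cont}}$ (with unit-mean exponential holding times) to the discrete-time walk $X^{C_n}$ appearing in \cref{thm:main RW}: since holding times are i.i.d.\ mean-one and the number of steps up to time $s$ is concentrated around $s$, a standard law-of-large-numbers/time-change argument (as already used in the literature for random walks on graphs, e.g.\ \autocite{CrHaKu}) shows the two processes have the same diffusive scaling limit, which is what allows \cref{thm:main RW} to be read off from \cref{RWcor}. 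I would also remark that, as noted in the introduction, related results (transition densities, mixing times, local times) follow by the same route using \autocite{CHllt, CHKmix, Noda}, but I would not carry those out here.
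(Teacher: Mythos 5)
Your proposal is correct and follows essentially the same route as the paper: combine \cref{thm:main scaling lim} with \cref{resconv} to obtain GHP convergence of $(C_n,\frac{1}{\delta\gamma\sqrt{n}}R_n,\frac{1}{\kappa n}\tilde{\nu}_n,\rho)$ to the CRT, then feed this into the resistance-form convergence machinery of \autocite{DavidResForms} (the paper uses Skorohod representation to realise the convergence almost surely, applies the quenched process convergence on a common embedding space, and takes expectations over the environment to get the annealed statement), with the time scaling $\theta=\delta\gamma\kappa$ arising exactly as you describe. The remarks on the limit being a resistance metric measure space and on the continuous-to-discrete time change are consistent with the paper (the latter is used in the proof of \cref{RWprecise} rather than here).
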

\begin{proof}
It readily follows from \cref{resconv} that, on the same probability space $(\Omega, \F, \Pb)$,
\[d_{GHP}\left(\left(C_n,\frac{1}{\gamma\sqrt{n}}d_n,\frac{1}{n}{\nu}_n,\rho_n\right),\left(C_n,\frac{1}{\delta\gamma\sqrt{n}}R_n,\frac{1}{\kappa n}\tilde{\nu}_n,\rho_n\right)\right)\rightarrow0\]
in probability. In conjunction with \cref{thm:main scaling lim}, this implies that
\[\left(C_n,\frac{1}{\delta\gamma\sqrt{n}}R_n,\frac{1}{\kappa n}\tilde{\nu}_n,\rho_n\right)\]
converges in distribution to $(\mathcal{T},d_\mathcal{T},\nu_\mathcal{T},\rho_\mathcal{T})$ with respect to the Gromov-Hausdorff-Prohorov topology. Moreover, since the underlying topological space is separable, we can apply Skorohod's representation theorem to suppose that versions of all the spaces in question are built on the same probability space so that the convergence holds almost-surely. Thus we can appeal to \autocite[Theorem 1.2]{DavidResForms} (and its proof) to deduce that, almost-surely, one can find a common metric space $(M,d_M)$ into which $(C_n,\frac{1}{\delta\gamma\sqrt{n}}R_n)$ and $(\mathcal{T},d_{\mathcal{T}})$ can be isometrically embedded so that
\[d_M^H\left(C_n,\mathcal{T}\right)\rightarrow 0,\qquad d_M^P\left(\frac{1}{\kappa n}\tilde{\nu}_n,\nu_\mathcal{T}\right)\rightarrow 0,\qquad d_M(\rho,\rho_\mathcal{T})\rightarrow0,\]
and also
\[P^{C_n,\mathrm{cont}}_\rho\left(\left(X^{C_n,\mathrm{cont}}_{\theta n^{3/2}t}\right)_{t\geq 0}\in\cdot\right)\rightarrow P_{\rho_\mathcal{T}}^\mathcal{T},\]
weakly as probability measures on $D(\mathbb{R}_+,M)$, where we identify the various objects and their embeddings. By the definition of the extended Gromov-Hausdorff-Prohorov topology, this yields in turn that, almost-surely with respect to the coupling measure,
\[P^{C_n,\mathrm{cont}}_\rho\left(\left(C_n,\frac{1}{\delta\gamma\sqrt{n}}R_n,\frac{1}{\kappa n}\tilde{\nu}_n,\rho,\left(X^{C_n,\mathrm{cont}}_{\theta n^{3/2}t}\right)_{t\geq 0}\right)\in\cdot\right)\rightarrow P_{\rho_\mathcal{T}}^\mathcal{T}\left(\left(\mathcal{T},d_\mathcal{T},\nu_\mathcal{T},\rho_\mathcal{T},X^\mathcal{T}\right)\in\cdot\right),\]
weakly as probability measures on $\tilde{\mathbb{K}}_c$. Taking expectations with respect to the random environments $C_n$ and $\mathcal{T}$ concludes the proof.
\end{proof}

Finally, we can state and prove a precise version of \cref{thm:main RW}.

\begin{theorem}\label{RWprecise}
As $n\rightarrow\infty$, the annealed laws of
\[\left(C_n,\frac{1}{\gamma\sqrt{n}}d_n,\frac{1}{n}{\nu}_n,\rho,\left(X^{C_n}_{\lfloor\theta n^{3/2}t\rfloor}\right)_{t\geq 0}\right)\]
converge weakly as probability measure on $\tilde{\mathbb{K}}_c$ to the annealed law of
\[\left(\mathcal{T},d_\mathcal{T},\nu_\mathcal{T},\rho_\mathcal{T},X^\mathcal{T}\right),\]
where the constant $\gamma$ is given by \cref{thm:main scaling lim}, and $\theta$ by \cref{RWcor}.
\end{theorem}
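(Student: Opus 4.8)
The plan is to deduce Theorem \ref{RWprecise} from Corollary \ref{RWcor} by passing from the continuous-time Markov chain $X^{C_n,\mathrm{cont}}$ (with unit-mean exponential holding times and generator \eqref{gen}) to the discrete-time simple random walk $X^{C_n}$, and simultaneously from the effective-resistance-and-degree-measure presentation of $C_n$ back to the graph-metric-and-counting-measure presentation. The second of these is immediate: Proposition \ref{resconv} already gives, on a common probability space, that $d_{GHP}$ between $(C_n,\frac{1}{\gamma\sqrt n}d_n,\frac1n\nu_n,\rho)$ and $(C_n,\frac{1}{\delta\gamma\sqrt n}R_n,\frac{1}{\kappa n}\tilde\nu_n,\rho)$ tends to $0$ in probability, and this was already exploited inside the proof of Corollary \ref{RWcor}; so it suffices to show that replacing $(X^{C_n,\mathrm{cont}}_{\theta n^{3/2}t})_{t\ge0}$ by $(X^{C_n}_{\lfloor \theta n^{3/2}t\rfloor})_{t\ge0}$ does not change the limit.

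First I would recall the standard relation between the two walks on a finite graph $G$: the continuous-time chain $X^{G,\mathrm{cont}}$ is a time-change of the discrete-time walk $X^{G}$, namely $X^{G,\mathrm{cont}}_t = X^{G}_{N(t)}$ where $N(t)$ is a rate-$1$ Poisson process independent of $X^{G}$ (the generator \eqref{gen} has each state holding for a unit-mean exponential time before making a discrete-walk jump). Equivalently, $X^{G}_{k}$ is $X^{G,\mathrm{cont}}$ observed at the jump times, which accumulate at asymptotic rate $1$. Consequently, running $X^{C_n,\mathrm{cont}}$ for time $\theta n^{3/2}t$ sees $N(\theta n^{3/2}t)$ discrete steps, and by the functional law of large numbers for the Poisson process, $\sup_{t\le \mathcal T}|N(\theta n^{3/2}t) - \lfloor \theta n^{3/2}t\rfloor|/n^{3/2} \to 0$ in probability for each fixed horizon $\mathcal T$. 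Since, on the coupling space of Corollary \ref{RWcor}, the rescaled discrete walk $(X^{C_n}_{\lfloor sn^{3/2}\rfloor})_{s\ge0}$ converges (as a consequence of the same theorem applied along the discrete skeleton, or directly by the equicontinuity estimates for resistance-form processes in \autocite{DavidResForms}), a random time change by $N(\theta n^{3/2}t)/n^{3/2}\to t$ uniformly on compacts does not affect the Skorohod $J_1$-limit; this is a routine application of the continuity of composition with a uniformly-convergent time change. I would phrase this cleanly by noting that $(X^{C_n}_{\lfloor \theta n^{3/2}t\rfloor})_{t\ge0}$ and $(X^{C_n,\mathrm{cont}}_{\theta n^{3/2}t})_{t\ge0}$ can be coupled (via the Poisson clock) so that their rescaled versions are uniformly close on compacts with probability tending to $1$, hence have the same weak limit in the path component of $\tilde{\mathbb K}_c$.

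The remaining bookkeeping is to combine these two reductions: by Proposition \ref{resconv} the metric-measure components converge jointly with the path component (the embeddings $\phi$ into the common space $M$ from the proof of Corollary \ref{RWcor} serve for all components at once, since $D_{F_n}$ dominates constant multiples of $n^{-1/2}d_n$ and $n^{-1/2}R_n$, and the measures $n^{-1}\nu_n$, $\kappa^{-1}n^{-1}\tilde\nu_n$ are Prohorov-close there), and by the Poisson-clock coupling the path component with $(X^{C_n}_{\lfloor\theta n^{3/2}t\rfloor})$ replacing $(X^{C_n,\mathrm{cont}}_{\theta n^{3/2}t})$ has the same limit. Passing to annealed laws — integrating over the environment $C_n$ — is exactly as in the proof of Corollary \ref{RWcor}, using the measurability of the quenched laws noted there (cf.\ \autocite[Proposition 6.1]{Noda}). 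The main obstacle, such as it is, is purely technical: one must make sure the time-change argument is carried out in a topology (the extended GHP topology on $\tilde{\mathbb K}_c$ of \autocite{Khezeli}) where the Poisson-clock perturbation is genuinely negligible, i.e.\ that uniform-on-compacts closeness of the two rescaled paths in $M$ implies closeness of the corresponding elements of $\tilde{\mathbb K}_c$; this follows from the definition of $d_{\tilde{\mathbb K}_c}$ and the fact that uniform closeness implies $J_1$-closeness, so there is no real difficulty beyond citing the right convergence statement.
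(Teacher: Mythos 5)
Your proposal is correct and follows essentially the same route as the paper: deduce the statement for the continuous-time chain from \cref{resconv} and \cref{RWcor}, then remove the unit-mean exponential holding times via the Poisson-clock time change, a step the paper simply declares straightforward and delegates to a cited example while you spell it out. The only cosmetic caveat is that you should justify the negligibility of the time change by the convergence (and a.s.\ path continuity of the limit) of the continuous-time process from \cref{RWcor}, rather than by first asserting convergence of the rescaled discrete walk, which is what is being proved; your parenthetical alternative already points in the right direction, so this is not a genuine gap.
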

\begin{proof} Applying \cref{resconv} and \cref{RWcor}, it is possible to establish that the result is true with $X^{C_n,\mathrm{cont}}$ in place of $X^{C_n}$. Using that the holding times of $X^{C_n,\mathrm{cont}}$ are unit mean exponentials, replacing it with the discrete-time process $X^{C_n}$ is straightforward (see \cite[Section 4.2]{archerlooptrees} for an example of such an argument).
\end{proof}

\begin{rmk}\label{rmk:transition densities etc}
(a) In addition to the convergence of the stochastic processes $X^{C_n}$, one may deduce convergence of the corresponding transition densities and mixing times by applying the general results of \autocite{CHllt,CHKmix}. In particular, \cite{CHllt} explains that the additional requirement for the convergence of transition densities is an equicontinuity property for the latter objects, and shows how this is readily checked in the resistance form setting, as is applicable here. With the convergence of transition densities, the main result of \cite{CHKmix}, which concerns mixing time convergence, can then be applied. Whilst we do not present the details here, in both \autocite{CHllt} and \autocite{CHKmix}, graph trees converging to the continuum random tree are considered, and the basic arguments of those examples will transfer to the present setting.\\
(b) Another quantity associated with a random walk that is natural to study is the corresponding local time. In the case of resistance spaces, convergence of local times was established in \cite{CrHaKu} under a uniform volume doubling condition, which is too strong to apply in many examples of random graphs, including that of this work. On the other hand, a stronger result was derived in \cite{Noda}, which gives a condition for the convergence of local times based on the metric entropy of the spaces in question. As per \cite[Corollary 7.3]{Noda}, to apply the main result of \cite{Noda}, it would suffice to check that: for any fixed $\epsilon>0$, there exists $c_{\epsilon}<\infty$ such that (defining balls $B(x,r)$ with respect to resistance metric)
\begin{align*}
\liminf_{n \to \infty} \pr{\inf_{x \in C_n} \frac{1}{n}\tilde{\nu}_n(B(x,2u \sqrt{n})) > c_{\epsilon}u^3 \text{ for all } u \in \left(\frac{1}{(\log n)^4},2\right)}\geq 1-\epsilon.
\end{align*}
Due to the current length of the article, we will not give a detailed proof of this estimate, but rather sketch how one might proceed to prove it. By \cref{cor:one big tree}, the above bound is implied by the analogous result for the corresponding decorated tree model (where balls are still defined with respect to the resistance metric): for any fixed $\epsilon>0$, there exists $c_{\epsilon}<\infty$ such that
\begin{align}\label{eqn:dec tree local time prob condition}
\liminf_{n \to \infty} \pr{\inf_{x \in \Dec(T_n)} \frac{1}{n}{\nu}^{\Dec(T_n)}(B(x,u \sqrt{n})) > c_{\epsilon}u^3 \text{ for all } u \in \left(\frac{1}{(\log n)^4},2\right)} \geq 1-\epsilon,
\end{align}
where we have replaced the degree measure by the counting measure, which is clearly smaller. To establish this, we note that we obtain from the resistance version of \eqref{eqn:dstar good} that
\[\pr{\exists u, v \in t_{\bullet}(T_n) \text{ such that }  \left|\chi_R^{-1}R_n^*(u,v) - 2^{-1}d_n^{\text{tr}}(u,v)\right| > \epsilon d_n^{\text{tr}}(u,v) \vee n^{1/3}} \to 0.\]
Together with \cref{lem:max diam}, this implies that to check \eqref{eqn:dec tree local time prob condition}, it would be sufficient to verify the corresponding result for $t_\bullet(T_n)$, equipped with $d_n^{\text{tr}}$ (which is proportional to its shortest path metric) and the counting measure on this space. Now, in the unconditioned model, the black vertices form a one-type Galton-Watson tree, and for such an object conditioned to be large (i.e.\ having size equal to $n$, with $n\rightarrow\infty$), the relevant volume bound was given in \autocite[Proposition 8.10]{Noda}. So, the final step would be to justify the change in conditioning from $t_\bullet(T)=n$ to $\#T \geq n$, but we would not expect any fundamental issues to arise in doing this.
\end{rmk}

\appendix

\section{List of notation}\label{sec:notation}

\begin{center}
\begin{longtable}{p{0.22\textwidth}|p{0.5\textwidth}|p{0.12\textwidth}}
\textbf{Notation} & \textbf{Definition} & \textbf{Initial appearance}\\
\hline
$\Ha$ & law of map of the half-plane with parameter $\alpha$ & p. \pageref{fa Ha} \\
\hline
$p, p_c$ & site percolation probability, and its critical value & p. \pageref{pcdef} \\
\hline
$\left(\C_n, \frac{ 1}{\gamma\sqrt{n}}d_n, \frac{1}{n}\nu_n, \rho\right)$ & rescaled percolation cluster, conditioned to have at least $n$ vertices & p. \pageref{thm:main scaling lim}  \\
\hline
$\T$ or $(\T, d_\T, \nu_\T, \rho_\T)$ & CRT with mass at least $1$ & p. \pageref{thm:main scaling lim}  \\
\hline
$\N$ & the natural numbers $\{1,2,\ldots \}$ & p. \pageref{sctn:GW background} \\
\hline
$k_u$ & number of offspring of a vertex $u$ in a Galton-Watson tree & p. \pageref{fa ku}  \\
\hline
$t_{\bullet}$ & collection of black vertices of a tree $t$ (at even generations) & p. \pageref{fa mu t}  \\
\hline
$t_{\circ}$ & collection of white vertices of a tree $t$ (at odd generations) & p. \pageref{fa mu t}  \\
\hline
$\mu_{\bullet}, \hat{\mu}_{\bullet}$ & offspring distribution for a two-type Galton-Watson tree at even generations, and size-biased version & p. \pageref{fa mu t}  \\
\hline
$\mu_{\circ}, \hat{\mu}_{\circ}$ & offspring distribution for a two-type Galton-Watson tree at odd generations, and size-biased version & p. \pageref{def:Kesten tree} \\
\hline
$\B$ & Brownian excursion of lifetime at least $1$ & p. \pageref{fa exc defs} \\
\hline
$\zeta$ & the lifetime of a Brownian excursion or a random walk excursion & p. \pageref{fa exc defs} \\
\hline
$\pi$ & the projection from $[0, \zeta]$ to $\T$ & p. \pageref{eqn:CRT def} \\
\hline
$t$, $\Loop (t)$ & two-type plane tree and its associated looptree & p. \pageref{sctn:looptrees} \\
\hline
$Z$ & an excursion satisfying $Z_0=1, Z_i \geq 1$ for all $1 \leq i \leq n$ and $Z_n = 1$ & p. \pageref{eqn:contour equiv def} \\
\hline
$\Zr, \mur$ & reversed version of $Z$ and its offspring law & p. \pageref{eqn:mu def general case background reversed} \\
\hline
$L_Z$ & two-type looptree coded by $Z$ & p. \pageref{eqn:mu def general case background reversed} \\
\hline
$T_Z$ & two-type tree coded by $Z$ & p. \pageref{eqn:mu def general case background reversed} \\
\hline
$\mu$ & step distribution for $Z$, defined in \eqref{eqn:mu def} & p. \pageref{eqn:mu def} \\
\hline
$L_Z^*, T_Z^*$ & deterministically extended two-type looptree and tree coded by $Z$ & p. \pageref{fa extended} \\
\hline
$(G_n, d_n, m_n, \ell_n)$ & decorating graph where $G_n$ has boundary length $n$ & p. \pageref{fa dec graph} \\
\hline
$\Pb_n$ & law of $(G_n, d_n, \nu_n, \ell_n)$ & p. \pageref{fa dec graph} \\
\hline
$(\Dec (t), d, m, \rho)$ & decorated version of a tree $t$, with root $\rho$, metric $d$ and measure $\nu$ & p. \pageref{fa dec tree} \\
\hline
$G^{(u)}$ & graph in $\Dec (T)$ inserted at vertex $u \in T$ & p. \pageref{fa dec tree} \\
\hline
$\X$ & random walk excursion & p. \pageref{eqn:Vervaat dis} \\
\hline
$\dGHP{ \cdot, \cdot }, d_{\tilde{\mathbb{K}}_c}( \cdot, \cdot )$ & pointed Gromov-Hausdorff-Prohorov distance and extension with \cadlag paths & p. \pageref{eqn:GHP def} \\
\hline
$\prstart{\cdot}{m,q}$ & $q$-Boltzmann measure on triangulations of the $m$-gon & p. \pageref{fa dKc} \\
\hline
$M$ & map with law $\Ha$ & p. \pageref{fa M} \\
\hline
$p_i$ & probability that peeled vertex is on boundary at distance $i$ from root edge & p. \pageref{prop:peeling probs} \\
\hline
$(B_i)_{i \geq 0}$ & evolution of the black boundary length in a peeling exploration & p. \pageref{fa B} \\
\hline
$\hat{T}$ & termination time of the peeling exploration & p. \pageref{fa B} \\
\hline
$\mu$ & jump distribution for truncated peeling exploration $Z$ (these have laws of $Z, \mu$ above) & p. \pageref{eqn:mu def} \\
\hline
$\N_0$ & $\N \cup \{0\}$ & p. \pageref{eqn:fa No} \\
\hline
$\CC, \CC_Z$ & critical cluster discovered during a peeling exploration, and its construction via $Z$ & p. \pageref{prop:def36ok} \\
\hline
$\beta, \sigma, \chi$ & scaling constants for the volume and metric & p. \pageref{def:scaling consts} \\
\hline
$(\Dec (T_n), \td_n, \tnu_n, \rho_n)$ & decorated tree coded by an excursion of length at least $n$ & p. \pageref{prop:GHP convergence of enriched trees} \\
\hline
$\mathcal{R}_n$ & canonical correspondence between rescaled $\Dec (T_n)$ and the CRT using coding functions & p. \pageref{def:correspondence} \\
\hline
$\mathcal{R}^*_n, \mathcal{R}^{**}_n, \mathcal{R}^{***}_n$ & intermediate correspondences used in proofs & pp. \pageref{fa Rnstar}, \pageref{fa Rnstarrr} \\
\hline
$d_n^{\textsf{tr}}$ & tree distance on $T_n$ & p. \pageref{fa dtr} \\
\hline
$I_k^{\epsilon}$ & the interval $[U+k\epsilon, U+(k+1)\epsilon]$ where $U \sim \textsf{Uniform}([0,\epsilon])$ & p. \pageref{fa IKu} \\
\hline
$X_u$ & copy of $\nu_{\deg u} (G_{\deg u})$ replacing $u$ & p. \pageref{eqn:def nu n star} \\
\hline
$\nu^*_n$ & measure on real line determined by $\Dec (T_n)$, i.e.\ $\nu^*_n(A) = \sum_{i \in \N: \frac{i}{n} \in A} \mathbbm{1}\{u_i \in t_{\circ}\} \nu (X_{u_i})$ & p. \pageref{eqn:def nu n star} \\
\hline
$\tilde{\pi}_n$ & pushforward to $\Dec (T_n)$ via $\mathcal{R}_n$: $\tilde{\pi}_n(A) = \{x \in \Dec (T_n): \exists s \in A \text{ with } (x,\pi(s)) \in \mathcal{R}_n\}$ & p. \pageref{fa pin tilde} \\
\hline
$(F_n, D_{F_n})$ & space of canonical GH embedding & p. \pageref{fa canonical GHP} \\
\hline
$D_n$ & the largest rescaled diameter of an inserted graph & p. \pageref{fa diam Dn} \\
\hline
$\tau$ & $\inf\{ n \geq 0: Z_n \leq 0\}$ & p. \pageref{fa tau} \\
\hline
$\CC^{T}_Z$ & decorated version of largest GW tree attached to root loop of $\CC$ & p. \pageref{fa CTZ} \\
\hline
$R_G$ & effective resistance on a graph $G$ & p. \pageref{effres} \\
\end{longtable}
\end{center}

\printbibliography

\end{document}